\numberwithin{equation}{section}
\newtheorem{theorem}{Theorem}[section]
\newtheorem{corollary}[theorem]{Corollary}
\newtheorem{lemma}[theorem]{Lemma}
\newtheorem{proposition}[theorem]{Proposition}
\newtheorem{definition}[theorem]{Definition}
\newtheorem{remark}[theorem]{Remark}
\newcommand{\N}{\mathbb{N}}
\newcommand{\R}{\mathbb{R}}
\renewcommand{\S}{\mathbb{S}}
\newcommand{\Z}{{\rm Z}}
\newcommand{\sfd}{{\sf d}}
\newcommand{\restr}[1]{\lower3pt\hbox{$|_{#1}$}}
\newcommand{\eps}{\varepsilon}  
\newcommand{\nchi}{{\raise.3ex\hbox{$\chi$}}}
\newcommand{\weakto}{\rightharpoonup}
\newcommand{\fr}{\hfill$\blacksquare$}  
\newcommand{\LIP}{\mathrm{LIP}}
\newcommand{\Lip}{\mathrm{Lip}}
\newcommand{\lip}{\mathrm{lip}}
\newcommand{\diam}{\mathrm{diam}}
\newcommand{\RCD}{\mathrm{RCD}}
\newcommand{\CD}{\mathrm{CD}}
\newcommand{\mm}{\mathfrak m}
\renewcommand{\limsup}{\varlimsup}
\renewcommand{\liminf}{\varliminf}
\renewcommand{\d}{{\rm d}}
\newcommand{\X}{{\rm X}}
\newcommand{\Y}{{\rm Y}}
\newcommand{\Xdm}{(\X,\sfd,\mm)}
\newcommand{\rmCh}{{\rm Ch}}
\newcommand{\eucl}{{\sf Eucl}}
\newcommand{\vol}{{\rm Vol}}
\newcommand{\supp}{{\rm supp}}
\newcommand{\Per}{{\rm Per}}
\newcommand{\DDelta}{{\mbox{\boldmath$\Delta$}}}
\newcommand{\PP}{\mathscr{P}}
\newcommand{\la}{{\big\langle}}                  
\newcommand{\ra}{{\big\rangle}}
\newcommand{\bd}{{\bf \Delta}}
\renewcommand{\phi}{\varphi}
\newcommand{\avr}{{\sf AVR}}
\newcommand{\I}{{\rm I}}
\newcommand{\II}{{\rm II}}
\newcommand{\III}{{\rm III}}
\title[]{Stability of Sobolev inequalities on Riemannian manifolds with  Ricci curvature lower bounds}
\address{University of Jyv\"askyl\"a, Department of Mathematics and Statistics, P.O. Box 35 (MaD), FI-40014 University of Jyv\"askyl\"a, Finland}
\author[]{Francesco Nobili}
\author[]{Ivan Yuri Violo}
\email{francesco.nobili@dm.unipi.it, ivan.violo@sns.it}
\begin{document}
\begin{abstract}
We study the qualitative stability of two classes of Sobolev inequalities on Riemannian manifolds. In the case of positive Ricci curvature, we prove that an almost extremal function for the sharp Sobolev inequality is close to an extremal function of the round sphere. In the setting of non-negative Ricci curvature and Euclidean volume growth, we show an analogous result in comparison with the extremal functions in the  Euclidean Sobolev inequality. As an application, we deduce a stability result for minimizing Yamabe metrics.  The arguments rely on a generalized Lions' concentration compactness on varying spaces and on rigidity results of Sobolev inequalities on singular spaces.
\end{abstract}
\maketitle
\allowdisplaybreaks
\setcounter{tocdepth}{2}
\tableofcontents

\section{Introduction}
The sharp Sobolev inequality on the standard round sphere $\mathbb{S}^n$,   $n>2,$ reads as
\begin{equation}
\|u\|_{L^{2^*}}^2\le \frac{2^*-2}{n} \| \nabla u\|_{L^{2}}^2 + \| u\|_{L^2}^2, \qquad \forall u \in W^{1,2}(\S^n),
\label{eq:Sob sphere}
\end{equation}
where $2^* \coloneqq 2n /(n-2)$ and  the norms are computed with the renormalized volume measure $\frac{\vol_{\S^n}}{\vol_{\S^n}(\S^n)}$. This inequality goes back to the work of Aubin  \cite{Aubin76-3}, who also characterized non-constant  extremizers (see also \cite[Chapter 5]{Hebey99}) having the following expression (denoting by $\sfd$ the distance induced by the metric):
\begin{equation}
 u \coloneqq \frac{a}{(1-b\cos(\sfd(\cdot ,z_0))^{\frac{n-2}{2}}},\qquad \text{with } a\in\R,\, b\in(0,1),\, z_0 \in \S^n.
\label{eq:Aubin extremal intro}
\end{equation}
We will refer to them as \emph{spherical bubbles}. A natural question is  the one of stability:
\begin{itemize}
    \item[$(Q)$] Is a function satisfying almost equality in \eqref{eq:Sob sphere} close to a spherical bubble?
\end{itemize}
Up to a change of coordinates via the stereographic projection (see e.g.\ \cite{Lieb,DE20,EngelsteinNeumayerSpolaor22}), this question is  equivalent to the stability of the  Euclidean Sobolev inequality
\begin{equation}
\|u\|_{L^{2^*}(\R^n)} \le \eucl(n,2)\| \nabla u\|_{L^2(\R^n)}, \qquad \forall u \in \dot W^{1,2}(\R^n),
\label{eq:SobEuclidea}
\end{equation}
where $\dot W^{1,2}(\R^n) \coloneqq \{ u \in L^{2^*}(\R^n) \colon |\nabla u|\in L^2(\R^n)\}$ and $\eucl(n,2)>0$ is the sharp constant, computed by Aubin \cite{Aubin76-2} and Talenti \cite{Talenti76} (see \eqref{eq:eucln2} for its precise value
). Extremizers, i.e.\ functions $u$ for which equality occurs in \eqref{eq:SobEuclidea}, are also in this case completely characterized:
\begin{equation}
u(x)\coloneqq \frac{a}{(1+b|x-z_0|^2)^{\frac{n-2}{2}}}, \qquad a \in \R,\, b>0,\, z_0 \in \R^n.
\label{eq:bubble intro}
\end{equation}
We shall refer to these functions as \emph{Euclidean bubbles} (usually called Talenti or Aubin-Talenti bubbles). The first \emph{quantitative} stability result was obtained by Bianchi and Egnell \cite{BianchiEgnell91} who showed that
\begin{equation}
    \inf \frac{\| \nabla( u -w )\|_{L^2(\R^n)}}{\|\nabla u\|_{L^2(\R^n)}} \le  C_n \Big(\frac{\| \nabla u\|_{L^2(\R^n)} }{ \|u\|_{L^{2^*}(\R^n)} }-\eucl(n,2)^{-1} \Big)^{\frac 12}, \qquad \forall u \in \dot W^{1,2}(\R^n), \label{eq:strong stability Eucl}
\end{equation}
for a dimensional constant $C_n>0$ and the infimum taken among all $w$ as in \eqref{eq:bubble intro}. This stability is \emph{strong}, in the sense that the $L^2$-norm of the difference of gradients is the biggest possible norm that can be controlled, and optimal, as the exponent $1/2$ is sharp.  We mention that quantitative stability for the case of the $p$-Sobolev inequality in $\R^n$ has also been obtained in sharp form (see \cite{CianchiFuscoMaggiPratelli09,FigalliNeumayer19,Neumayer19,FigalliZhang22}). The stability of \eqref{eq:SobEuclidea} in \emph{qualitative} form, meaning that if the right-hand side of \eqref{eq:strong stability Eucl} is small then so is the left-hand side (in a non-quantified sense), can be deduced  via concentration compactness  \cite{Lions84,Lions85}.

\medskip

In this note, we address the analogous stability of $(Q)$ for Sobolev inequalities on more general Riemannian manifolds.

Let us consider a closed $n$-dimensional  Riemannian manifold $(M,g)$, $n>2,$ satisfying
\[
{\rm Ric}_g \ge (n-1)g.
\]
Under these assumptions the same Sobolev inequality \eqref{eq:Sob sphere} as in the sphere  holds \cite{Said83}:
\begin{equation}
\|u\|_{L^{2^*}}^2\le \frac{2^*-2}{n} \| \nabla u\|_{L^{2}}^2 + \| u\|_{L^2}^2, \quad \forall u \in W^{1,2}(M),
\label{eq:Sob main intro}
\end{equation}
where the norms are  with the renormalized volume measure. Proofs of this inequality using different methods are also given in \cite{Bakry94,BL96,Fontenas97,Hebey99,BakryGentilLedoux14,DGZ20}. We can ask an analogous stability:

\begin{itemize}
    \item[$(Q')$] Is a function satisfying almost equality in \eqref{eq:Sob main intro}  close to a spherical bubble?
\end{itemize}
Almost equality here means that
\[
   \mathcal{Q}(u)\coloneqq\frac{ \|u\|_{L^{2^*}}^2 - \|u\|^2_{L^2} }{\|\nabla u\|^2_{L^2} } \sim \frac{2^*-2}{n} .
\]
In the previous work \cite{NobiliViolo21}, we proved that if $  |{\mathcal{Q}}(u)-\frac{2^*-2}{n}|$ is small, then $M$ is qualitatively close in the measure Gromov-Hausdorff sense to a spherical suspension, which roughly said is a possibily-singular generalization of the round sphere. In particular, when $\sup \mathcal{Q}(u) = n^{-1}(2^*-2)$, rigidity occurs, i.e.\  $M$ is isometric to $\mathbb{S}^n$. These facts already  suggested an affirmative answer to $(Q')$ and in fact here we will confirm that this is indeed the case. More precisely, for $M$ as above, every $a \in \R$, $b \in [0,1)$ and $z \in M$, set
\begin{equation}\label{eq:bubbles}
    w_{a,b,z}(\cdot)\coloneqq \frac{a}{(1 - b\cos(\sfd(\cdot ,z_0))^{\frac{n-2}{2}}},
\end{equation}
with the convention that $w_{a,0,z}\equiv a.$ Our main result is then the following (as before, all the norms are with respect to the renormalized volume measure):
\begin{theorem}\label{thm:qualitative SobCompact intro}
For every $\eps>0$ and $n >2$ there exists $\delta\coloneqq \delta(\eps,n)>0$ such that the following  holds. Let $(M,g)$ be an $n$-dimensional Riemannian manifold with ${\rm Ric}_g\ge (n-1)g$  and suppose there exists $u\in W^{1,2}(M)$ non-constant satisfying 
\begin{equation}\label{eq:almost extremal intro}
   {\mathcal{Q}}(u) >  \frac{2^*-2}{n}-\delta.
\end{equation}
Then, there exist $a \in \R$, $b \in [0,1)$ and $z \in M$ such that
 \begin{equation}\label{eq:close to bubble}
       \frac{\| \nabla(u- w_{a,b,z} )\|_{L^2} + \|u-w_{a,b,z}\|_{L^{2^*}}}{\|u\|_{L^{2^*}}} \le \eps.
 \end{equation}
Moreover, if $w_{a,b,z}\equiv a $  (i.e.\ $b=0$), then $a\in \R$ can be chosen  so that the reminder
$$R\coloneqq u-a$$ 
satisfies
\begin{equation}
    \| R\cdot \|R\|_{L^2}^{-1}- \sqrt{N+1}\cos(\sfd(\cdot,p))\|_{L^2}\le C_n (\eps^\alpha+\delta)^{\beta},
\end{equation}
for some $p \in M$ and positive constants $\alpha, \beta, C_n$ depending only on $n.$
\end{theorem} 
The above theorem is the first stability result for the Sobolev inequality that covers a wide class of Riemannian manifolds; indeed up to our best knowledge only very special symmetric cases had been studied so far: see \cite{BhaktaGangulyKarmakarMazumdar22} 
 for the hyperbolic space and \cite{Frank21} for $\mathbb{S}^1(1/\sqrt{d-2})\times \mathbb{S}^{n-1}(1)$.

Some comments on the above statement are in order.
\begin{enumerate}[label=\roman*)]
\item The value of $\delta$ depends only on $n$ and $\eps>0$, but not on the manifold $M$. Moreover, up to scaling, an analogous statement holds assuming ${\rm Ric}_g\ge K$ for some $K>0$, with $\delta$ depending also on $K$.
\item Even if Theorem \ref{thm:qualitative SobCompact intro} is stated completely in the smooth-setting, its proof will require the study of the Sobolev inequality also in singular spaces (see below the strategy for more details).
\item  The result \eqref{eq:close to bubble} actually holds under a slightly weaker assumption than \eqref{eq:almost extremal intro}, namely:
\begin{equation}\label{eq:stronger}
    \|u\|_{L^{2^*}(\vol_g)}^2\ge A \| \nabla u\|_{L^{2}(\vol_g)}^2 + B\| u\|_{L^2(\vol_g)}^2,
\end{equation}
with $|A-\frac{2^*-2}{n}|+|B-1|\le \delta$ (see Remark \ref{rmk:weaker assumptino}).
\item The first part of Theorem \ref{thm:qualitative SobCompact intro} holds also restricting to the class of non constant spherical bubbles, that is $w_{a,b,z}$ with $b \neq 0$.
\item The second part of Theorem \ref{thm:qualitative SobCompact intro} should be read as follows: if the almost extremal function $u$ is close to a constant, then (up to changing the constant)  the reminder is close in $L^2$-sense to a cosine of the distance. Thus, since
\[
 1+\eps \cos(\sfd) \sim \frac{1}{(1+\eps \cos(\sfd))^{\frac{n-2}{2}}},
\]
this means that $u$ still retains, at a `second-order' approximation, the shape of a spherical bubble. This extra information essentially  comes from the fact that the linearization of  the Sobolev inequality is the Poincar\'e inequality,  which means that   plugging in \eqref{eq:Sob main intro} functions of the type $1+\eps f$ and sending $\eps \to 0$  gives the sharp Poincar\'e inequality for $f$ (see e.g.\ \cite[Lemma 6.7]{NobiliViolo21}). Therefore if $1+\eps f$ satisfies almost equality in \eqref{eq:Sob main intro}, then $f$ almost satisfies equality in the sharp Poincar\'e inequality and thus should be close to a cosine of the distance (see \cite{CavalettiMondinoSemola19}).

    \item When $M$ is not the round sphere, the existence of an extremizer, that is a function which maximizes  ${\mathcal{Q}}(u)$, is unknown in general. This question is contained in \cite[Question 4B, Pag. 120]{Hebey99} as part of the so-called $AB$-program around Sobolev inequalities on general Riemannian manifolds. In this direction, we mention the Sobolev-alternative statement proved in \cite[Theorem 6.8]{NobiliViolo21}.    
    
    Nevertheless, thanks to the above theorem, we are able to say something about the shape of functions for which this ratio is large, i.e.\  satisfying \eqref{eq:almost extremal intro}.
\end{enumerate}

\begin{remark}
\rm
Note that above we deal only with $p=2$. The reason is that the inequality
\begin{equation}
\|u\|_{L^{p^*}}^p\le A\| \nabla u\|_{L^{p}}^p + \| u\|_{L^p}^p, \qquad \forall u \in W^{1,p}(M),
\end{equation}
is false for any $p>2$, $A>0$ and any $(M,g)$ closed manifold (see \cite[Prop. 4.1]{Hebey99}).
\fr
\end{remark}
As an application of Theorem \ref{thm:qualitative SobCompact intro}, we prove a stability-type result for minimizing  Yamabe metrics. Recall that a solution to the Yamabe problem on a Riemannian manifold $(M,g)$ is a smooth positive function $u$ such that the metric $u^{\frac{4}{n-2}}g$ has constant scalar curvature (see  \cite{Yamabe60} and also the surveys \cite{LP87,BrendleMarques11}). After the works \cite{Trudinger68, Aubin76-3, Schoen84} it is known that a solution exists on every closed Riemannian manifold 
and that can be found as a minimizer of
\begin{equation}
   Y(M,g)\coloneqq \inf_{\substack{u \in W^{1,2}(M) \\  u \neq 0}} {\mathcal{E}}(u)\coloneqq  \inf_{\substack{u \in W^{1,2}(M) \\ u \neq 0}} n(n-1) \frac{ \int \frac{2^*-2}{n}|\nabla u|^2  + \frac{{\rm Scal}_g}{n(n-1)} u^2 \d \vol_g }{\big(\int |u|^{2^*}\, \d \vol_g\big)^{2/2^*}},
\label{eq:Yamabe constant}
\end{equation}
where ${\rm Scal}_g$ is the scalar curvature of $g$ and $\vol_g$ is the (non-renormalized) volume measure.  $ Y(M,g)$ is a called Yamabe constant of $(M,g)$ and it is  a conformal invariant. Note that in the case of $\mathbb{S}^n$, the minimizers of ${\mathcal{E}}(u)$ are precisely the spherical bubbles in \eqref{eq:Aubin extremal intro}.
\begin{corollary}\label{cor:yamabe}
For every $n >2$ and $\eps>0$ there exists $\delta\coloneqq \delta(\eps,n)>0$ such that the following holds. 
Let $(M,g)$ be an $n$-dimensional Riemannian manifold with ${\rm Ric}_g\ge (n-1)g$ and $u \in W^{1,2}(M)$ non-zero such that 
\begin{equation}\label{eq:close to sphere}
    \sfd_{GH}(M,\mathbb{S}^n)\le \delta, \qquad |\mathcal{E}(u)- Y(M,g)|\le \delta.
\end{equation}
Then, there exist $a\in\R,b\in (0,1)$ and $z_0 \in M$ satisfying
\[
   \frac{\| u -w_{a,b,z} \|_{W^{1,2}}}{\|u\|_{W^{1,2}}} \le  \eps,
\] 
where $w_{a,b,z}$ is as in \eqref{eq:bubbles}.
\end{corollary}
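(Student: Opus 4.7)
The plan is to reduce the corollary to Theorem \ref{thm:qualitative SobCompact intro} by showing that $u$ is an almost extremizer for the Sobolev inequality \eqref{eq:Sob main intro}, and then translating the resulting approximation into the $W^{1,2}$-ratio of the statement. All norms below are taken with respect to the renormalized volume measure, as in \eqref{eq:Sob main intro}; this is harmless because the $W^{1,2}$-ratio in the conclusion is unaffected by such rescaling.

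The first step will be to identify $Y(M,g)$ in the limit $\sfd_{GH}(M,\S^n)\to 0$. Aubin's theorem provides the upper bound $Y(M,g)\le Y(\S^n)=n(n-1)\omega_n^{2/n}$, where $\omega_n\coloneqq\vol(\S^n)$. For the matching lower bound I would use ${\rm Scal}_g\ge n(n-1)$ (implied by the Ricci assumption) to bound the scalar-curvature term in $\mathcal{E}(v)$ from below by $\|v\|_{L^2(\vol_g)}^2$; then the sharp Sobolev inequality \eqref{eq:Sob main intro} produces $\mathcal{E}(v)\ge n(n-1)V^{2/n}$ for every $v\ne 0$, where $V\coloneqq\vol_g(M)$. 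Colding's volume convergence theorem (Ricci lower bound plus GH-convergence to $\S^n$) gives $V\to\omega_n$ as $\delta\to 0$, and hence $Y(M,g)\to Y(\S^n)$.

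Combining the hypothesis $|\mathcal{E}(u)-Y(M,g)|\le\delta$ with the same two-sided estimate applied to $u$ will force the Sobolev quotient of $u$ to be almost extremal:
\[
\|u\|_{L^{2^*}}^2\ge \tilde A\,\|\nabla u\|_{L^2}^2+\tilde B\,\|u\|_{L^2}^2,\qquad|\tilde A-\tfrac{2^*-2}{n}|+|\tilde B-1|\to 0\text{ as }\delta\to 0.
\]
This is exactly the assumption \eqref{eq:stronger}, and Theorem \ref{thm:qualitative SobCompact intro} (via item (iii) of the comments) then yields $a\in\R$, $b\in[0,1)$, $z\in M$ with $\|\nabla(u-w_{a,b,z})\|_{L^2}+\|u-w_{a,b,z}\|_{L^{2^*}}\le\eps'\|u\|_{L^{2^*}}$ for arbitrarily small $\eps'$. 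To pass to $W^{1,2}$-norms I would use H\"older inequality on the probability measure to upgrade $L^{2^*}$-closeness to $L^2$-closeness, and \eqref{eq:Sob main intro} to obtain $\|u\|_{L^{2^*}}\le C_n\|u\|_{W^{1,2}}$, which together allow replacing $\|u\|_{L^{2^*}}$ by $\|u\|_{W^{1,2}}$ in the denominator. Choosing $\eps'=\eps'(\eps,n)$ small enough gives the required bound.

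The only subtle point left is to ensure $b\in(0,1)$ rather than merely $b\in[0,1)$. If Theorem \ref{thm:qualitative SobCompact intro} returns $b=0$ the approximating bubble reduces to the nonzero constant $a$ (nonzero because $u\not\equiv 0$), and the linearization \eqref{eq:linearization} forces $u-a$ to be $L^2$-close to a scalar multiple of $\cos(\sfd(\cdot,p))$. Using the Taylor expansion $w_{a,b',p}=a+\tfrac{(n-2)ab'}{2}\cos(\sfd(\cdot,p))+O(b'^2)$ for small $b'>0$, I can then match $b'>0$ to the $L^2$-size of $u-a$ and obtain a genuine non-constant bubble still $\eps$-approximating $u$ in $W^{1,2}$. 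The main technical obstacle of the argument is the first step, the convergence $Y(M,g)\to Y(\S^n)$, since Yamabe constants are known to behave delicately under GH-convergence in general; however in the present setting it reduces to combining the sharp Sobolev inequality \eqref{eq:Sob main intro} with Colding's volume convergence and becomes essentially elementary.
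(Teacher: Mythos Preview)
Your proposal is correct and follows essentially the same route as the paper: bound the Sobolev quotient of $u$ from above using ${\rm Scal}_g\ge n(n-1)$, Aubin's inequality $Y(M,g)\le Y(\S^n)$, and Colding's volume convergence, then invoke Theorem \ref{thm:qualitative SobCompact intro} in its stronger form \eqref{eq:stronger}. The paper's proof is terser and does not spell out the norm conversions you carry out; these are indeed routine for the reasons you indicate. One simplification: for the $b=0$ case you can bypass the linearization argument entirely, since remark iv) after Theorem \ref{thm:qualitative SobCompact intro} already states that the conclusion \eqref{eq:close to bubble} holds restricting to non-constant bubbles $b\in(0,1)$, so no extra work is needed there.
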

Here $\sfd_{GH}$ denotes the Gromov-Hausdorff distance. 
A similar stability for almost minimizers of  $\mathcal{E}(\cdot)$ has been recently proved in \cite{EngelsteinNeumayerSpolaor22} in quantitative form and under no assumptions on the metric. The novelty here is that we have a comparison with an explicit class of functions, while in \cite{EngelsteinNeumayerSpolaor22} no information is known about the shape of the minimizers.

\medskip 

We discuss now a second stability result on non-compact Riemannian manifolds. Our motivations come from the fact that, to prove Theorem \ref{thm:qualitative SobCompact intro}, non-compact setting will naturally arise in our investigation (see below the main strategy of proof).

Let us consider an $n$-dimensional Riemannian manifolds $(M,g)$, $n>2$, satisfying
\begin{equation}
    {\rm Ric}_g \ge 0, \qquad {\sf AVR }(M)\coloneqq \lim_{R\to\infty}\frac{{\rm Vol}(B_{R}(x))}{\omega_n R^n }>0, \label{eq:AVR}
\end{equation}
for $x \in M$. The latter condition is called \emph{Euclidean volume growth} property and ${\sf AVR }(M)$ is the asymptotic volume ratio. Notice that the limit exists and is independent of $x$, by the Bishop-Gromov inequality.

 In \cite{BaloghKristaly21}, the following sharp Euclidean-type Sobolev inequality was derived under the assumptions \eqref{eq:AVR}:
\begin{equation}
\| u\|_{L^{2^*}} \le {\sf AVR }(M)^{-\frac{1}{n}}\eucl(n,2)\| \nabla u\|_{L^{2}},\qquad \forall u \in \dot W^{1,2}(M).
\label{eq:sharp sob intro}
\end{equation}
Moreover, they proved that equality occurs in \eqref{eq:sharp sob intro} for some non-zero function $u\in \dot W^{1,2}(M)$, then $M$ is isometric to $\R^n$ and $u$ is in particular an Euclidean bubble. Actually in \cite{BaloghKristaly21} this rigidity requires also $u \in C^n(M)$ and $u\ge 0$, however these additional assumptions can be removed after the results in \cite{AntonelliPasqualettoPozzettaSemola22} and \cite{CavallettiManini22} (see also Theorem \ref{thm:rigidity sharp Sob}).  

The natural stability question is what happens if a function satisfies almost equality in \eqref{eq:sharp sob intro}. Clearly, differently from \eqref{eq:Sob main intro}, we cannot deduce anything about the geometry of $M$. Indeed the inequality is sharp on every $M$ as in \eqref{eq:AVR}, which means that we can always find functions so that
$\frac{\| u\|_{L^{2^*}}}{\| \nabla u\|_{L^2}}$
is arbitrary close to  ${\sf AVR }(M)^{-\frac{1}{n}}\eucl(n,2)$. We can prove however that a function for which almost equality occurs in \eqref{eq:sharp sob intro} is close to a Euclidean bubble. Set
\[
v_{a,b,z}\coloneqq \frac{a}{(1+b \sfd(\cdot ,z)^2)^{\frac{n-2}{2}}},\qquad \text{for }a \in \R,b >0,z \in M.
\]
\begin{theorem}\label{thm:qualitative SobAVR intro} 
For every $\eps>0,V\in(0,1)$ and $n>2,$ there exists $\delta\coloneqq \delta(\eps,n,V)>0$ such that the following holds.
Let $(M,g)$ be an $n$-dimensional Riemannian manifold as in \eqref{eq:AVR} with ${\sf AVR}(M)\ge V$ and assume there exists $u\in \dot W^{1,2}(M)$ non-zero satisfying
\[
  \frac{\| u\|_{L^{2^*}}}{\| \nabla u\|_{L^2}}> {\sf AVR}(M)^{-\frac 1n}\eucl(n,2)- \delta.
\]
Then, there exist $a \in \R,\, b>0,$ and $z \in M$ so that
\[
    \frac{\| \nabla( u -v_{a,b,z} )\|_{L^2}}{\|\nabla u\|_{L^2}} \le  \eps.
\] 
\end{theorem}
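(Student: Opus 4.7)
The plan is to argue by contradiction using a concentration-compactness scheme on varying spaces, combined with the rigidity of the sharp Euclidean-type Sobolev inequality on $\RCD(0,n)$ spaces of positive asymptotic volume ratio. Suppose the theorem fails: then I can find $\eps>0$, $V\in(0,1)$, a sequence of $n$-dimensional Riemannian manifolds $(M_k,g_k)$ with ${\rm Ric}_{g_k}\ge 0$ and $\avr(M_k)\ge V$, and non-zero $u_k\in \dot W^{1,2}(M_k)$ whose Sobolev quotient approaches the sharp constant $\avr(M_k)^{-1/n}\eucl(n,2)$, while the normalized $L^2$-gradient distance of $u_k$ from every Euclidean bubble $v_{a,b,z}$ on $M_k$ remains at least $\eps$.

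The first step is to normalize and fix a scale. By homogeneity I may assume $\|u_k\|_{L^{2^*}}=1$, and by the scale-invariance of the sharp inequality I rescale the metric $g_k$ by a factor $\lambda_k^2$ (and the function accordingly) so that the concentration function
\[
Q_k(r)\coloneqq \sup_{x\in M_k}\int_{B_r(x)} |u_k|^{2^*}\,\d\vol_{g_k}
\]
satisfies $Q_k(1)=\frac{1}{2}$. I then re-root at a point $p_k\in M_k$ where this supremum is (essentially) attained. This both fixes the scale and keeps a definite amount of $L^{2^*}$-mass inside $B_1(p_k)$, ruling out vanishing.

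Next, I invoke pointed measured Gromov-Hausdorff pre-compactness for pointed manifolds with ${\rm Ric}\ge 0$ and the uniform non-collapsing bound $\avr(M_k)\ge V>0$: along a subsequence, $(M_k,\lambda_k\sfd_{g_k},\lambda_k^{-n}\vol_{g_k},p_k)$ converges in the pmGH sense to a non-collapsed $\RCD(0,n)$ space $(\X_\infty,\sfd_\infty,\mm_\infty,p_\infty)$ with $\avr(\X_\infty)\ge V$. Applying the generalized Lions concentration-compactness principle for Sobolev functions on varying spaces developed earlier in the paper, the almost-extremality of $u_k$ excludes both dichotomy and loss of mass at infinity: any non-trivial splitting into two profiles would asymptotically produce a Sobolev ratio strictly below the sharp constant on $\X_\infty$, which itself is at most $\avr(M_k)^{-1/n}\eucl(n,2)$ up to $o(1)$, contradicting the almost-extremal assumption. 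Hence $u_k$ converges to a non-zero $u_\infty\in \dot W^{1,2}(\X_\infty)$ saturating the sharp Sobolev inequality
\[
\|u_\infty\|_{L^{2^*}(\mm_\infty)} = \avr(\X_\infty)^{-1/n}\eucl(n,2)\,\|\nabla u_\infty\|_{L^2(\mm_\infty)}.
\]

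By the rigidity of the sharp Sobolev inequality on $\RCD(0,n)$ spaces with positive asymptotic volume ratio (Theorem \ref{thm:rigidity sharp Sob}), the limit $(\X_\infty,\sfd_\infty,\mm_\infty)$ is isomorphic, as a metric measure space, to $\R^n$ equipped with a constant multiple of the Lebesgue measure, and $u_\infty$ is a Euclidean bubble. Moreover, the saturation of the inequality in the limit upgrades the weak convergence of $u_k$ to $u_\infty$ to strong convergence in $\dot W^{1,2}$ along the varying sequence. Finally, I transport $u_\infty$ back to Euclidean bubbles $v_k = v_{a_k,b_k,z_k}$ on $M_k$ using the pmGH identification (with $z_k\in M_k$ converging to $p_\infty$), obtaining $\|\nabla(u_k-v_k)\|_{L^2}/\|\nabla u_k\|_{L^2}\to 0$, which contradicts the standing hypothesis. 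The main obstacle I anticipate is the concentration-compactness analysis on the varying sequence: quantitatively excluding dichotomy, vanishing and escape of mass requires sharply exploiting the gap between the almost-extremal level and any split profile on the limit, and then lifting the limiting bubble back to an approximating bubble on $M_k$ requires a quantitative handle on the pmGH convergence.
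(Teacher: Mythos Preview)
Your approach is essentially the same as the paper's: contradiction, L\'evy-type rescaling to fix a concentration scale, pmGH precompactness, concentration compactness to exclude vanishing and dichotomy and to obtain strong convergence to a limit extremal, rigidity to identify the extremal as a bubble, then transport the bubble back along the sequence for a contradiction.

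One correction: the rigidity theorem (Theorem~\ref{thm:rigidity sharp Sob}) does \emph{not} force the limit space to be $\R^n$ with a multiple of Lebesgue measure. It only yields that $(\X_\infty,\sfd_\infty,\mm_\infty)$ is an $N$-Euclidean metric measure cone, and the extremal $u_\infty$ has the form $a(1+b\,\sfd_\infty^2(\cdot,z_0))^{(2-n)/2}$ with $z_0$ a tip of the cone. The pmGH limit of smooth $n$-manifolds with ${\rm Ric}\ge 0$ can be a genuinely singular cone; there is no reason it must be flat $\R^n$. This does not break your argument, since what you actually need is the radial profile of $u_\infty$, which you then pull back to $v_{a_k,b_k,z_k}$ on $M_k$ via points $z_k\to z_0$ (as the paper does using Lemma~\ref{lem:recovery extremals} and Lemma~\ref{lem:coupling nablas}). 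But the stronger claim that $\X_\infty\cong\R^n$ is unwarranted and should be removed.
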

 Notice that the stability  is strong in the sense that we control the gradient norm as in the Euclidean case \eqref{eq:strong stability Eucl}.

 A direct consequence of the above theorem is:
 \begin{corollary}\label{cor:introcorollary}
Let $(M,g)$ be an $n$-dimensional Riemannian manifold as in \eqref{eq:AVR}. Then
\[
 {\sf AVR}(M)^{\frac 1n}\eucl^{-1}(n,2)  =  \inf_{a \in \R,\, b>0,\, z \in M} \frac{\| \nabla v_{a,b,z} \|_{L^2}}{\|  v_{a,b,z} \|_{L^{2^*}}}.
\]
\end{corollary}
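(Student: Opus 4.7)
The plan is to establish the two inequalities that together yield the stated equality. The bound
\[
{\sf AVR}(M)^{1/n}\eucl^{-1}(n,2) \le \inf_{a \in \R,\, b > 0,\, z \in M} \frac{\|\nabla v_{a,b,z}\|_{L^2}}{\|v_{a,b,z}\|_{L^{2^*}}}
\]
is immediate: every bubble $v_{a,b,z}$ with $a \ne 0$ lies in $\dot W^{1,2}(M)$ (both the $L^{2^*}$ and the Dirichlet integrals converge thanks to Bishop-Gromov comparison and the decay of $v_{a,b,z}$), so applying the sharp inequality \eqref{eq:sharp sob intro} directly to $v_{a,b,z}$ produces the desired lower bound.

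For the reverse inequality, I would combine Theorem \ref{thm:qualitative SobAVR intro} with the sharpness of \eqref{eq:sharp sob intro}, established in \cite{BaloghKristaly21}. Sharpness supplies a sequence $\{u_k\} \subset \dot W^{1,2}(M)$ with
\[
\frac{\|u_k\|_{L^{2^*}}}{\|\nabla u_k\|_{L^2}} \longrightarrow {\sf AVR}(M)^{-1/n}\eucl(n,2) \qquad \text{as } k \to \infty.
\]
Fix $\eps > 0$ and apply Theorem \ref{thm:qualitative SobAVR intro} with $V \coloneqq {\sf AVR}(M)$ and this $\eps$: for all $k$ sufficiently large, the almost-extremality hypothesis is satisfied, so there exist parameters $a_k \in \R$, $b_k > 0$, $z_k \in M$ such that $v_k \coloneqq v_{a_k,b_k,z_k}$ obeys $\|\nabla(u_k - v_k)\|_{L^2} \le \eps \|\nabla u_k\|_{L^2}$.

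The triangle inequality for the gradient norm, together with \eqref{eq:sharp sob intro} applied to $u_k - v_k$ to control the $L^{2^*}$ norm, then give
\[
\|\nabla v_k\|_{L^2} \le (1+\eps)\|\nabla u_k\|_{L^2}, \qquad \|v_k\|_{L^{2^*}} \ge \|u_k\|_{L^{2^*}} - \eps\,{\sf AVR}(M)^{-1/n}\eucl(n,2)\|\nabla u_k\|_{L^2}.
\]
Dividing these bounds, passing to the limit $k \to \infty$, and then sending $\eps \downarrow 0$ yields the reverse bound on the infimum. The argument is structurally a clean deduction from Theorem \ref{thm:qualitative SobAVR intro}, and no real obstacle is expected: the only additional input beyond that theorem is the sharpness of \eqref{eq:sharp sob intro}, which is already known. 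Morally, the quantitative stability lets us transfer the achievement of the sharp ratio from arbitrary minimizing sequences onto the explicit family of Euclidean bubbles.
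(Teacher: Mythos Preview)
Your proposal is correct and matches the paper's approach: the paper simply states that the corollary follows directly from the stability theorem (Theorem \ref{thm:qualitative SobAVR intro}), and your argument spells out exactly how that deduction goes. The only cosmetic point is that you should take $V$ strictly less than ${\sf AVR}(M)$ (the theorem requires $V\in(0,1)$ with ${\sf AVR}(M)\ge V$, and the ${\sf AVR}(M)=1$ case is the Euclidean space itself), but this does not affect the argument.
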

\begin{remark}
\rm 
Our main  results in Theorem \ref{thm:qualitative SobCompact intro} and Theorem \ref{thm:qualitative SobAVR intro}, even if stated on smooth Riemannian manifolds, actually hold also in the context of weighted Riemannian manifolds and more generally in the singular setting of metric measure spaces with a synthetic Ricci curvature lower bound. The generalized version of these statements can be found in Theorem \ref{thm:qualitative Optimal Sobolev RCD} and Theorem \ref{thm:qualitative SobAVR RCD}.
\fr
\end{remark}
%
%
\noindent{\bf Strategy of proof and non-smooth setting}. We outline the argument for Theorem \ref{thm:qualitative SobCompact intro} (Theorem \ref{thm:qualitative SobAVR intro} is simpler and follows by the same strategy).  The underlying idea is classical, that is  to argue by contradiction and concentration compactness. However, the novelty is that the space is not homogeneous and also not fixed, since we need to deal with a whole class of Riemannian manifolds. Moreover, singular and non-compact limit spaces must also be considered. In particular, the whole analysis will be carried out in the more general setting of $\RCD$ spaces, which are metric measure spaces with a synthetic notion of Ricci curvature bounded below (see Section \ref{sec:prelim} for details and references).

Suppose that Theorem \ref{thm:qualitative SobCompact intro} is false. Then, there  exist $\eps>0,$ a sequence $\{M_k\}_{k \in \N}$ of $n$-dimensional Riemannian manifolds with ${\rm Ric}_k\ge n-1$ and non-constant functions $u_k\colon M_k \to \R$, $\|u_k\|_{L^{2^*}}=1,$ which satisfy \eqref{eq:almost extremal intro} for some  $\delta_k\downarrow 0$, but so that for any $k\in\N$
\begin{equation}
    \inf \| u_k- w \|_{L^{2^*}} + \| \nabla( u_k- w)\|_{L^2} > \eps,
\label{eq:contradiction intro}
\end{equation}
where the inf runs among all spherical bubbles $w = a(1 - b\cos(\sfd_k(\cdot,z))^{\frac{2-n}{2}}$ ($\sfd_k$ being the distance on $M_k$). Similarly to the classical concentration compactness  \cite{Lions84,Lions85} in $\R^n$,  we choose  points $y_k \in M_k$ and constants $\sigma_k>0$ so that, defining
\begin{equation}
(Y_k,\rho_k,\mu_k)\coloneqq (M_k,\sigma_k\sfd_k,{\rm Vol}_k(M_k)^{-1}\sigma_k^n{\rm Vol}_k),\qquad 
 u_{\sigma_k} = \sigma_k^{-n/2^*}u_k,
\label{eq:transformation intro}
\end{equation}
we have
\[
\int_{B_1^{Y_k}(y_k)} |u_{\sigma_k}|^{2^*} \, \d \mu_k=\frac12,
\]
(in the actual proof we choose a suitable constant close to $1$). The spaces $(Y_k,\rho_k,\mu_k)$ are in particular metric measure spaces which are rescalings of the original manifolds $M_k$. Note that it can happen that $\sigma_k \uparrow \infty$,  which  corresponds to a concentrating behavior of the sequence $u_k$. In this case, the diameter of $Y_k$ goes to infinity and we are in a sense performing a blow-up along $M_k.$

Thanks to Gromov's precompactness theorem \cite{Gromov07} it is possible to show that, up to a subsequence, $(Y_k,\rho_k,\mu_k,y_k)$ converges in the pointed-measure-Gromov-Hausdorff sense to a limit $\RCD$ space $(Y,\rho,\mu, \bar y)$ (which might be non-smooth). Using a generalized version of  Lions' concentration compactness for a sequence of $\RCD$ spaces (see Section \ref{sec:gen extremals}), we show that up to a further subsequence, $u_{\sigma_k}$ converges $L^{2^*}$-strongly (on varying spaces, see Definition \ref{def:lpconv} below) to some $u \in L^{2^*}(\mu)$. It also follows that $u$ is extremal for a `limit Sobolev inequality' on $Y$, that might be both as in \eqref{eq:Sob main intro} or of Euclidean-type as in \eqref{eq:sharp sob intro}, depending if there is concentration or not along the original sequence $u_k$. The key point is proving:
\begin{align*}
    &\text{Concentration}  &\Rightarrow \qquad &Y \text{ is a metric-cone and } u \text{ is a Euclidean bubble} \\
    &\text{Non-concentration} &\Rightarrow \qquad &Y \text{ is a spherical suspension and } u \text{ is a spherical bubble}
\end{align*}
We will show these two facts by proving suitable  rigidity theorems for the Sobolev inequalities on $\RCD$ spaces (see Section \ref{sec:rigidity extremal}). The proof will be then completed by carefully bringing back this information from $u$ to the sequence $u_k$ to find a contradiction with \eqref{eq:contradiction intro}. It is worth noticing that, in case of concentration, the scaled functions $u_{\sigma_k}$ tend to a Euclidean bubble but, to reach a contradiction, the original sequence $u_k$ must be close to the family of spherical bubbles. This turns out to be true because a concentrated spherical bubble looks locally, around the point where it is concentrated, like a Euclidean bubble (see Lemma \ref{lem:stability when concentrating}). 

\medskip

We conclude this introduction by mentioning that generalized concentration compactness techniques on varying spaces, in a similar spirit to the present work, have been recently developed in \cite{AntonelliFogagnoloPozzetta21,AntonelliNardulliPozzetta22} and applied to study the problem of existence of isoperimetric regions on non-compact Riemannian manifolds \cite{AntonelliBrueFogagnoloPozzetta22}.

\section{Preliminaries }\label{sec:prelim}

\subsection{Calculus on metric measure spaces}
A metric measure space is a triple $\Xdm$, where $(\X,\sfd)$ is a complete and separable metric space and $ \mm\neq 0$ is a non-negative and boundedly finite Borel measure. Two metric measure spaces are \emph{isomorphic}, provided there exists a measure preserving isometry between them. To avoid technicalities, we will always assume $\supp(\mm)=\X$.  We will denote by $\LIP(\X)$ and $\LIP_{bs}(\X)$ respectively the space of Lipschitz functions and Lipschitz functions with bounded support in $(\X,\sfd)$.
We recall the notion of local lipschitz constant of a Lipschitz function $f\in \LIP(\X)$:
\[ \lip \, f(x) \coloneqq \limsup_{y\to x}\frac{|f(y)-f(x)|}{\sfd(x,y)},
\]
set to $+ \infty$ if $x$ is isolated. The Sobolev space on a metric measure space was  introduced in \cite{Cheeger00} and \cite{Shan00} (inspired by the notion of upper gradient \cite{KoskelaHeinonen96,KoskelaHeinonen98}). Here we follow the axiomatization of \cite{AmbrosioGigliSavare11-3} (equivalent to that of \cite{Shan00,Cheeger00}). 

Let $\Xdm$ be a metric measure space and define the Cheeger energy $\rmCh \colon L^2(\mm)\to [0,\infty]$ 
\[ \rmCh(f) \coloneqq  \inf \Big\{ \liminf_{n\to\infty} \int \lip^2  f_n\, \d \mm \colon (f_n) \subset L^2(\mm)\cap \LIP(\X), f_n\to f \text{ in } L^2(\mm) \Big\}.\]
The Sobolev space is defined as $W^{1,2}(\X)\coloneqq \{f \in L^2(\mm) \colon \rmCh(f) <\infty\}$ and equipped with the norm $\| f\|^2_{W^{1,2}(\X)} \coloneqq \|f\|_{L^2(\mm)}^2 + \rmCh(f)$ turning it into a Banach space. We recall also (see e.g.\ \cite{AmbrosioGigliSavare11-3}) that for every $f \in W^{1,2}(\X)$ there exists a minimal $\mm$-a.e.\ object $| \nabla f| \in L^2(\mm)$ called minimal weak upper gradient so that
\[ \rmCh(f) = \int |\nabla f|^2\, \d \mm.\]
To lighten the notation, we will often write $\|\nabla f\|_{L^{2}(\mm)}$ in place of $\||\nabla f|\|_{L^{2}(\mm)}$. We shall often use the \emph{locality} of minimal weak upper gradients:
\[
|\nabla f| =|\nabla g|,\qquad \mm\text{-a.e.\ in } \{f =g\}.
\]
for every $f,g \in W^{1,2}(\X)$. For $\Omega\subset \X$ open we say that $f \in W^{1,2}_{loc}(\Omega)$, provided $\eta f \in W^{1,2}(\X)$ for every $\eta \in \LIP_{bs}(\X)$ with $\sfd(\supp(\eta),\X \setminus \Omega)>0$. By locality, the object
\[ |\nabla f| \coloneqq  |\nabla (\eta f)|,\qquad \mm\text{-a.e. on }\{ \eta = 1\},\]
is well defined as an $L^2_{loc}(\Omega)$-function and will be called again minimal weak upper gradient.  It can be easily checked that if $f \in W^{1,2}_{loc}(\X)$ with $f,|\nabla f|\in L^2(\mm)$, then  $f \in W^{1,2}(\X)$.

We shall need also the following semicontinuity result: 
\begin{equation}
\begin{array}{l}
     f_n \in W^{1,2}_{loc}(\X),\  f_n \to f\ \mm\text{-a.e.}  \\
     \liminf_n\| \nabla f_n \|_{L^2(\mm)}<\infty  
\end{array}
  \quad \Rightarrow \quad 
  \begin{array}{l}
  f \in W^{1,2}_{loc}(\X), |\nabla f|\in L^2(\mm)\\
  \|  \nabla f \|_{L^2(\mm)}\le\liminf_n\| \nabla f_n\|_{L^2(\mm)}
  \end{array}\label{eq:W12loc lsc}    
\end{equation}
The $W^{1,2}_{loc}$ regularity can be directly proved by appealing to the semicontinuity (see, e.g., \cite[Prop 2.1.13]{GP20}) in the space $W^{1,2}(\X)$ and a cut-off argument. The fact that $|\nabla f|\in L^2(\mm)$ follows by noticing that, for any ball $B\subset \X$, $\int_B |\nabla f|^2\,\d\mm \le \int_B |\nabla (\eta f)|^2\,\d\mm \le \liminf_n \| \nabla f_n \|_{L^2(\mm)}$, where $\eta \in \LIP_{c}(\X)^+$ with $\eta \equiv 1$ on $B$, having used twice the locality of the minimal weak upper gradient  and again \cite[Prop 2.1.13]{GP20}. This proves \eqref{eq:W12loc lsc} by arbitrariness of $B$.

For $\Omega\subseteq \X$ open, we define the Sovolev space of functions vanishing at the boundary $W^{1,2}_0(\Omega)\subset W^{1,2}(\X)$ as the closure or $\LIP_c(\Omega)$ with respect to the $W^{1,2}$ norm.

A metric measure space is called infinitesimally Hilbertian \cite{Gigli12} provided 
\[ |\nabla (f+g)|^2+ |\nabla (f-g)|^2 = 2|\nabla f|^2 + 2|\nabla g|^2, \qquad \mm\text{-a.e.,\ }\forall f,g \in W^{1,2}(\X),\]
or equivalently if $W^{1,2}(\X)$ is Hilbert. This allows defining a formal scalar product between gradients of Sobolev functions by polarization
\begin{equation}
    \la \nabla f, \nabla g\ra \coloneqq |\nabla f|^2+|\nabla g|^2 - |\nabla (f-g)|^2 \in L^1(\mm), \qquad \forall f,g \in W^{1,2}(\X),\label{eq:cosine rule}
\end{equation}
that is bilinear on its entries. By locality, it is possible to consider also a scalar product for functions in $W^{1,2}_{loc}(\Omega)$.

We recall next the \emph{measure-valued Laplacian} as in \cite{Gigli12}, in the case of $\X$ proper and infinitesimally Hilbertian. We say that $f \in W^{1,2}_{loc}(\Omega)$ has a measure-valued Laplacian on $\Omega$, and we write $f \in D(\bd,\Omega)$, provided there exists a (signed) Radon measure $\mu$ such that
\[ \int g \, \d \mu = - \int \la \nabla f,\nabla g\ra \, \d \mm, \qquad \forall g \in \LIP_c(\Omega).\]
Here signed Radon measure means difference of two positive Radon measures (see also \cite{CavallettiMondino20} for a related  discussion). The unique measure $\mu$ satisfying the above is denoted by $\bd f$ and depends linearly on $f$. If $\Omega=\X$ we simply write $f \in D(\bd)$. Moreover, if $\DDelta f \ll \mm$, we  write $\Delta f \coloneqq\frac{ \d  \DDelta f}{\d \mm}\in L^1_{loc}(\Omega)$.

Next, we introduce the sets of finite perimeter following  \cite{ADM14,Miranda03}.
For $E \subset \X$ Borel and $A\subset \X$ open, define
\[  {\rm Per}(E,A)\coloneqq  \inf \Big\{ \liminf_{n\to \infty} \int_A \lip \ f_n\, \d \mm \colon f_n \subset \LIP_{loc}(A), f_n \to \nchi_E \text{ in } L^1_{loc}(A) \Big\}.\]
If ${\rm Per}(E,\X)<\infty$ we say that $E$ has finite perimeter. In this case, the map $A\mapsto {\rm Per}(E,A)$ is the restriction to open sets of a non-negative finite Borel measure called the perimeter measure of $E$ (see \cite{ADM14} and also \cite{Miranda03}). As a convention, when $A=\X$ we simply write ${\rm Per}(E)$ instead of ${\rm Per}(E,\X)$.

\subsection{RCD-spaces}
In this note, we shall work with spaces that encode Ricci lower bounds in a synthetic sense as introduced first and independently in \cite{LV09} and \cite{Sturm06I,Sturm06II}. For $K \in \R, N\in[1,\infty)$, the Curvature Dimension condition $\CD(K,N)$ for a metric measure space is a weak notion of Ricci curvature bounded below by $K$ and dimension bounded above by $N$. We will actually consider here the subclass of spaces satisfying the so-called Riemannian Curvature Dimension condition. The $\RCD$-condition has been defined first in the infinite dimensional setting \cite{AmbrosioGigliSavare11-2} and later in \cite{Gigli12} in finite dimension. We also recall \cite{BacherSturm10,AmbrosioGigliSavare12,AGMR15,AmbrosioMondinoSavare13-2,EKS15,CM16} for key contributions on this theory and for the study of the equivalence of different definitions and approaches.  We refer to \cite{AmbICM} for more details and references. 

\begin{definition}
    A metric measure space $\Xdm$ satisfies the $\RCD(K,N)$ condition for some $K \in \R$, $N \in(1,\infty)$, if it is infinitesimally Hilbertian and satisfies the $\CD(K,N)$ condition
\end{definition}
To keep the exposition shorter will not recall the definition of the $\CD(K,N)$ condition and instead focus  on recalling the key properties of $\RCD$ spaces used in this note.

We start recalling that $\RCD(K,N)$ spaces satisfy the Bishop-Gromov inequality \cite{Sturm06I,Sturm06II}:
\begin{equation}
\frac{\mm(B_R(x))}{v_{K,N}(R)}\le \frac{\mm(B_r(x))}{v_{K,N}(r)}, \quad \text{for any $0<r<R\le \pi \sqrt{\frac{N-1}{K^+}}$ and $x \in \X$},\label{eq:Bishop}
\end{equation}
where $K^+$ is the positive part of $K$ and $v_{K,N}(r)$ is the volume of a ball of radius $r$ in the $(K,N)$-model space, see \cite{Sturm06I,Sturm06II} for the precise definition. We only recall the particular case $v_{0,N}(r)=\omega_Nr^N$. In particular  $\RCD(K,N)$ spaces  are uniformly locally doubling and, since they support a weak local Poincar\'e inequality \cite{Rajala12}, by the work \cite{Cheeger00} we have:
\begin{equation}
|\nabla f| = \lip\, f,\qquad\mm\text{-a.e.},\,\,\forall\, f \in \Lip_{bs}(\X).\label{eq:wug is lip}
\end{equation}

Since $\RCD(K,N)$ spaces are geodesic and uniformly locally doubling, they admit a reverse doubling inequality. We omit the standard argument (see e.g.\cite[Prop. 3.3]{GrigoryanHuLau09}). 
\begin{lemma}\label{prop:reverse doubling}
Let $\Xdm$ be an $\RCD(K,N)$ space for some $N\in(1,\infty),K\in\R$. Then there exists $\gamma=\gamma(N)>0$ and $R_{K^-,N}>0$ (with $R_{0,N}=+\infty$) such that for every  ball $B_R(x)\subsetneq \X$ with $R\le R_{K^-,N}$, it holds
\begin{equation}\label{eq:reverse doubling}
    \frac{\mm(B_{r}(x))}{\mm(B_{R}(x))}\le \left(\frac{r}{R}\right)^\gamma, \qquad \forall\, r \in(0,R/2).
\end{equation}
\end{lemma}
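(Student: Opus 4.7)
The plan is to reduce the polynomial decay to a single-scale bound $\mm(B_{R/2}(x)) \le \lambda\, \mm(B_R(x))$ with some $\lambda = \lambda(N) \in (0,1)$, and then iterate it dyadically. The single-scale bound is obtained by using that $\RCD(K,N)$ spaces are geodesic to extract a disjoint ball sitting in the shell $B_R(x) \setminus B_{R/2}(x)$, whose mass can then be compared to $\mm(B_{R/2}(x))$ via the Bishop--Gromov inequality \eqref{eq:Bishop}.

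For the one-step estimate I proceed as follows. Since $B_R(x) \subsetneq \X$, pick $z \in \X \setminus B_R(x)$ and a minimizing geodesic from $x$ to $z$, then select the point $y$ on this geodesic with $\sfd(x,y) = 3R/4$. The triangle inequality yields both $B_{R/4}(y) \subset B_R(x) \setminus B_{R/2}(x)$ and $B_{R/4}(x) \subset B_R(y)$, and hence
\[
\mm(B_R(x)) \ge \mm(B_{R/2}(x)) + \mm(B_{R/4}(y)).
\]
I then fix $R_{K^-,N}$ (set to $+\infty$ when $K \ge 0$ and of order $\sqrt{(N-1)/|K|}$ otherwise), so that for every $R \le R_{K^-,N}$ the model-space ratios $v_{K,N}(R/4)/v_{K,N}(R)$ and $v_{K,N}(R/4)/v_{K,N}(R/2)$ stay bounded below by some positive constant $c_0 = c_0(N)$. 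Applying \eqref{eq:Bishop} at $y$ and at $x$ gives
\[
\mm(B_{R/4}(y)) \ge c_0\, \mm(B_R(y)) \ge c_0\, \mm(B_{R/4}(x)) \ge c_0^2\, \mm(B_{R/2}(x)),
\]
which, inserted into the shell inequality above, yields the single-scale decay $\mm(B_R(x)) \ge (1+c_0^2)\mm(B_{R/2}(x))$ with $\lambda := (1+c_0^2)^{-1}$.

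Next I iterate. For $r \in (0, R/2)$, let $k \in \N$ satisfy $R/2^{k+1} \le r < R/2^k$. Because $B_R(x) \subsetneq \X$ forces $B_{R/2^j}(x) \subsetneq \X$ at every smaller scale, the one-step estimate applies at each level and gives $\mm(B_{R/2^k}(x)) \le \lambda^k\, \mm(B_R(x))$. Writing $\lambda = 2^{-\tilde\gamma}$ for $\tilde\gamma = \tilde\gamma(N) > 0$, using $2^{-k} \le 2r/R$ together with monotonicity of $r \mapsto \mm(B_r(x))$ delivers a bound of the form $C\,(r/R)^{\tilde\gamma}$; the multiplicative constant is finally absorbed into the exponent by decreasing $\tilde\gamma$ slightly to some $\gamma = \gamma(N)>0$, which is legitimate since $r/R < 1/2$.

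The main technical nuisance I anticipate is the scale-dependence of the Bishop--Gromov ratios when $K < 0$: the quantity $v_{K,N}(R/4)/v_{K,N}(R)$ degenerates as $R \to \infty$, which is precisely why the threshold $R_{K^-,N}$ must be finite in that regime and can be taken infinite only when $K \ge 0$. Beyond this bookkeeping, the argument is soft, relying only on geodesic selection, the triangle inequality, Bishop--Gromov and dyadic iteration, with no deeper $\RCD$ machinery needed.
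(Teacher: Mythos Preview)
Your argument is correct and is precisely the standard proof the paper alludes to (and which appears e.g.\ as \cite[Prop.~3.3]{GrigoryanHuLau09}): geodesic selection plus Bishop--Gromov gives the single-scale decay, and dyadic iteration upgrades it to power-law. The paper omits the proof entirely, so there is nothing substantive to compare; your write-up fills the gap accurately, including the correct handling of the threshold $R_{K^-,N}$ when $K<0$ and the absorption of the multiplicative constant using $r/R<1/2$ (concretely, $\gamma=\tilde\gamma/2$ works).
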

We recall also the following version of the \emph{coarea formula} from \cite[Proposition 4.2]{Miranda03} adapted to $\RCD$-setting  after \cite{GigliHan14}.
\begin{theorem}[Coarea formula]
	Let $(\X,\sfd,\mm)$ be an $\RCD(K,N)$ space, $N<+\infty,$ $\Omega \subset \X$ open and $f \in \LIP_{loc}(\Omega)$. Then  given any Borel function $g:\X \to [0,\infty)$, it holds that
	\begin{equation}\label{eq:coarea}
		\int_{\{s< f<t\}} g \,|\nabla f| \d \mm =\int_s^t \int g\, \d \Per(\{f>r\},\cdot )\, \d r, \qquad \forall s,t \in [0,\infty), \ s<t, \,\{f>s\}\subset \subset \Omega.
	\end{equation}
\end{theorem}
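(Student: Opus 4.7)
The plan is to reduce the claim to the metric $BV$ coarea formula of Miranda \cite[Prop.~4.2]{Miranda03}, combined with the $\RCD$ identification of the total variation measure of a bounded Lipschitz function with $|\nabla f|\,\mm$. This latter identification follows from the weak local Poincar\'e inequality available on $\RCD$ spaces \cite{Rajala12}, the general result of \cite{Cheeger00} (giving \eqref{eq:wug is lip}), and the $BV$ calculus of \cite{GigliHan14}. The substantive case is $g=\nchi_A$; a general nonnegative Borel $g$ then follows by approximation with simple functions and monotone convergence applied to both sides in $g$.

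First I would construct a global bounded Lipschitz extension whose level structure on $(s,t)$ coincides with that of $f$. Since $\{f>s\}\subset\subset\Omega$ and $f\in\LIP_{loc}(\Omega)$, set
$$\tilde f(x)\coloneqq\begin{cases}(f(x)\vee s)\wedge(t+1), & x\in\Omega,\\ s, & x\in\X\setminus\Omega.\end{cases}$$
A direct check, using the continuity of $f$ on $\Omega$ and the relative compactness of $\{f>s\}$ in $\Omega$, yields that $\tilde f\in\LIP(\X)$, that $\tilde f-s$ has bounded support of finite $\mm$-measure (by Bishop--Gromov \eqref{eq:Bishop}), and that
$$\{\tilde f>r\}=\{f>r\}\quad\text{for every }r\in[s,t+1),$$
while $\{\tilde f>r\}=\X$ for $r<s$ and $\{\tilde f>r\}=\emptyset$ for $r\ge t+1$. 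Locality of the minimal weak upper gradient together with Lemma~\ref{lem:chain rule} give $|\nabla\tilde f|=|\nabla f|\,\nchi_{\{s<f<t+1\}}$ $\mm$-a.e.

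Applying Miranda's coarea formula to $\tilde f-s\in L^1(\mm)\cap \mathrm{BV}(\X)$, combined with the identification $|D\tilde f|=|\nabla\tilde f|\,\mm$, yields
$$\int_B|\nabla\tilde f|\,\d\mm=\int_\R\Per(\{\tilde f>r\},B)\,\d r$$
for every Borel $B\subset\X$. Choosing Borel $A\subset\{s<f<t\}$ and taking $B=A$, the left-hand side equals $\int_A|\nabla f|\,\d\mm$. On the right-hand side only values $r\in(s,t)$ contribute: for $r<s$ the super-level set is $\X$ and its perimeter vanishes; for $r\in[t,t+1)$ the topological (hence essential) boundary of $\{\tilde f>r\}\subset\{f\ge r\ge t\}$ is disjoint from the open set $\{s<f<t\}\supset A$; and $r\ge t+1$ is trivial. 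This establishes \eqref{eq:coarea} for $g=\nchi_A$, and monotone convergence delivers the general nonnegative Borel case.

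The main technical point deserving care is the localization of the perimeter measures near the endpoint levels $s$ and $t$: one must check that the truncation and extension used to define $\tilde f$ do not create spurious contributions to $\Per(\{\tilde f>r\},A)$ for $r$ outside $(s,t)$. This is controlled by the continuity of $f$ on $\Omega$ together with the concentration of $\Per$ on the essential boundary of the corresponding super-level set, as outlined above.
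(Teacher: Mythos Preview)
Your proposal is correct and follows essentially the same route as the paper: reduce to a globally defined bounded Lipschitz function, invoke Miranda's $BV$ coarea formula together with the identification $|D\tilde f|=|\nabla\tilde f|\,\mm$ from \cite{GigliHan14}, and then transfer back using locality and the equality of super-level sets. The only cosmetic difference is that the paper builds $\tilde f$ by multiplying $f$ with a cutoff $\eta\in\LIP_c(\Omega)$ equal to $1$ on a neighbourhood of $\{f>s\}$ (so that $\{\tilde f>r\}=\{f>r\}$ for all $r>s$ directly), whereas you build $\tilde f$ by truncation and constant extension; your choice forces the additional endpoint analysis for $r\in[t,t+1)$, which you handle correctly via the support of the perimeter measure.
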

\begin{proof}
Fix $s,t$ as in \eqref{eq:coarea}  and $U \subset \subset \Omega$ open and containing $\{f>s\}$. We can suppose that $s>0$. Let $\eta \in \LIP_c(\Omega)$ with $\eta=1$ in $U$, $0\le \eta \le 1$ and set $\tilde f \coloneqq \eta f \in \LIP_c(\X)$. Then by \cite[Remark 4.3]{Miranda03}  and the results in \cite{GigliHan14} about the identification of total variation and minimal weak upper gradient, \eqref{eq:coarea} holds for $s,t$, any $g$ and with $\tilde f$ in place of $f$. To pass to $f$ simply use the locality of the weak upper gradient  and note that by construction $\{\tilde f>r\}=\{f>r\}$ for every $r>s$.
\end{proof}

We also report a regularity result from \cite{Jiang13}.
\begin{theorem}\label{thm:poisson lip}
Let $\Xdm$ be an $\RCD(K,N)$ space for some $K \in \R$, $N \in (1,\infty)$ and let $u\in D(\Delta)$ with $\Delta u= g u$ for some $g \in L^\infty(\mm)$,  $\|g\|_{L^\infty(\mm)}\le M$. Then for every $x_0 \in \X$ and every $R>0$ it holds 
\[
\||\nabla u|\|_{L^\infty(B_R(x_0))}\le C(K,N,R,M)  \fint_{B_{2R}(x_0)} |u| \d \mm.
\]
In particular $u \in \LIP_{loc}(\X).$
\end{theorem}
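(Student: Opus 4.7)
The natural strategy is to reduce the statement to Jiang's local Lipschitz regularity for functions with a bounded Laplacian on $\RCD(K,N)$ spaces, which is the main content of \cite{Jiang13}. The obstacle is that we only know $\Delta u=gu$ with $g\in L^\infty$, not $\Delta u\in L^\infty$ outright, so we must first upgrade $u$ from $W^{1,2}$ to $L^\infty_{\rm loc}$ before applying such a result.

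\emph{Step 1: Local $L^\infty$ bound via Moser iteration.} I would test the equation $\Delta u=gu$ against $|u|^{2p-2}u\,\eta^2$ for $p\ge 1$ and a cut-off $\eta\in\LIP_c(\X)$, using the chain rule of Lemma \ref{lem:chain rule} and integration by parts against the measure-valued Laplacian. This should yield a Caccioppoli-type inequality of the form
\begin{equation*}
p\int\eta^2|u|^{2p-2}|\nabla u|^2\,\d\mm\le C\int(|\nabla\eta|^2+M\eta^2)|u|^{2p}\,\d\mm.
\end{equation*}
Combining with the local Sobolev-Poincar\'e inequality available on $\RCD(K,N)$ spaces (via local doubling \eqref{eq:Bishop} and the weak local Poincar\'e inequality of \cite{Rajala12}), and iterating on a geometric sequence of nested balls between $B_{3R/2}(x_0)$ and $B_{2R}(x_0)$, one obtains
\begin{equation*}
\|u\|_{L^\infty(B_{3R/2}(x_0))}\le C_1(K,N,R,M)\fint_{B_{2R}(x_0)}|u|\,\d\mm.
\end{equation*}

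\emph{Step 2: Lipschitz bound.} Once $u\in L^\infty_{\rm loc}$, the equation gives $\Delta u\in L^\infty_{\rm loc}$ with $\|\Delta u\|_{L^\infty(B_{3R/2}(x_0))}\le M\|u\|_{L^\infty(B_{3R/2}(x_0))}$. I would then invoke Jiang's estimate from \cite{Jiang13} for $\RCD(K,N)$ spaces, which yields
\begin{equation*}
\||\nabla u|\|_{L^\infty(B_R(x_0))}\le C_2(K,N,R)\bigl(\|u\|_{L^\infty(B_{3R/2}(x_0))}+\|\Delta u\|_{L^\infty(B_{3R/2}(x_0))}\bigr),
\end{equation*}
and chaining this with the output of Step 1 gives the claimed estimate with $C(K,N,R,M)=C_2(1+M)C_1$. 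The local Lipschitz regularity $u\in\LIP_{\rm loc}(\X)$ then follows from the identification $|\nabla u|=\lip u$ in \eqref{eq:wug is lip}, applied on arbitrary compactly contained balls.

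\emph{Main obstacle.} The truly nontrivial ingredient is Jiang's Lipschitz estimate invoked in Step 2, whose proof in \cite{Jiang13} uses Bakry-\'Emery $\Gamma_2$ calculus together with Li-Yau type heat-kernel bounds specific to $\RCD(K,N)$; I would cite it as a black box rather than attempt to reprove it. The Moser iteration in Step 1 is by now routine on doubling-Poincar\'e metric measure spaces, but care is needed to track the quantitative dependence of $C_1$ on $M$ through the reaction term $gu$ and to verify that this dependence does not blow up along the iteration.
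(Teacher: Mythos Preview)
The paper does not prove this theorem at all; it is stated as a citation from \cite{Jiang13} (``We also report a regularity result from \cite{Jiang13}'') and used as a black box. Your two-step outline --- Moser iteration to upgrade $u$ to $L^\infty_{\rm loc}$, then Jiang's gradient estimate for functions with locally bounded Laplacian --- is the correct way to unpack what that citation actually contains, and indeed goes beyond what the paper records. One minor remark: the version of Jiang's estimate you quote in Step~2 (with $\|u\|_{L^\infty}+\|\Delta u\|_{L^\infty}$ on the right) is not exactly how it is usually stated; the original typically bounds $\||\nabla u|\|_{L^\infty(B_R)}$ by a scale-invariant combination of $\fint_{B_{2R}}|u|$ (or its $L^2$ average) and a term involving $\|\Delta u\|$, so chaining with Step~1 is even more direct than you indicate. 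Either way, the logic is sound and matches what the citation encodes.
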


We say that an $\RCD(0,N)$ space $\Xdm$ has Euclidean volume growth, if
\begin{equation}
    {\sf AVR}(\X) \coloneqq \lim_{R\to\infty} \frac{\mm(B_R(x))}{\omega_NR^N}>0,
\end{equation}
for one (and thus, any) $x \in \X$. In this setting,  a sharp isoperimetric inequality was proved in \cite{BaloghKristaly21} (previous versions in the smooth-setting already appeared in \cite{Brendle20,AgostinianiFogagnoloMazzieri20,FogagnoloMazzieri22, Johne}). A slightly weaker inequality  holds also in the ${\rm MCP}$ setting (\cite{CavallettiManini22-isoMCP}).
\begin{theorem}
Let $\Xdm$ be an $\RCD(0,N)$ space with $N\in(1,\infty),{\sf AVR}(\X)>0$. Then
\begin{equation}
     \Per(E) \ge N({\sf AVR}(\X)\omega_N)^{1/N}\mm(E)^{\frac{N-1}N}, \qquad \forall E\subset \X \text{ Borel, $\mm(E)<+\infty$}.\label{eq:isoperimetry AVR}
\end{equation}
\end{theorem}
Here $ \omega_N\coloneqq \pi^{N/2} \Gamma^{-1}\left(N/2+1\right)$, where $\Gamma(\cdot)$ is the Gamma-function. We shall need also the rigidity of \eqref{eq:isoperimetry AVR} in the $\RCD$-setting. This has been proved in \cite{AntonelliPasqualettoPozzettaSemola22} under the  noncollapsed assumption, which was recently removed (with a different argument) in \cite{CavallettiManini22}. 
\begin{theorem}\label{thm:rigidity ISOAVR}
Let $\Xdm$ be an $\RCD(0,N)$ space  with $N\in(1,\infty),{\sf AVR}(\X)>0$. Equality holds in \eqref{eq:isoperimetry AVR} for some $E\subset \X$  Borel with $\mm(E)<+\infty$ if and only if $\X$ is a $N$-Euclidean metric measure cone and $E$ is (up to $\mm$-neglible sets) a metric ball centred at one of the tips of $\X$.
\end{theorem}
Theorem \ref{thm:rigidity ISOAVR} is stated in \cite{CavallettiManini22} with the extra assumption that $E$ is bounded, however this assumption can be dropped thanks to the recent \cite{APPV23}.

Recall that for $N \in [1,\infty)$, the $N$-Euclidean  cone  over a metric measure space $(\Z,\mm_\Z,\sfd_\Z)$ is defined  to be the space $\Z\times [0,\infty)/(\Z\times \{0\})$ endowed with the following distance and measure
\begin{align*}
	&\sfd((t,z),(s,z'))\coloneqq \sqrt{t^2+s^2-2st\cos(\sfd_\Z(z,z')\wedge \pi)},\\
	&\mm\coloneqq t^{N-1} \d t\,\otimes\, \mm_\Z.
\end{align*}
The point $\Z \times \{0\}$ is called tip of the cone.
\subsection{Sobolev inequalities}
We next report the main Sobolev inequalities of this note starting in the compact setting. On an $\RCD(N-1,N)$  space $\Xdm$ for some $N\in (2,\infty)$ with $\mm(\X)=1$,  we recall the following Sobolev inequality (\cite{Profeta15,CavallettiMondino17})
\begin{equation}\label{eq:critical sobolev}
	\|u\|_{L^{2^*}(\mm)}^2\le \frac{2^*-2}{N} \|\nabla  u\|_{L^2(\mm)}^2 +   \|u\|_{L^2(\mm)}^2 , \qquad \forall u \in W^{1,2}(\X),
\end{equation}
where $2^* = 2N/(N-2)$. 

Moving to the non-compact setting, we start recalling a  classical one-dimensional inequality by Bliss \cite{Bliss30} (see also \cite{Aubin82,Talenti76,CianchiFuscoMaggiPratelli09}). To state it we introduce some notations. For all $ N \in (2,\infty)$, we define $\sigma_{N-1}\coloneqq N\omega_{N}$ and recall the sharp Euclidean Sobolev constant
\begin{equation}
\eucl(N,2)\coloneqq \Big(\frac{4}{N(N-2)\sigma_N^{2/N}}\Big)^\frac{1}{2}.\label{eq:eucln2}
\end{equation}
\begin{lemma}[Bliss inequality]\label{lem:bliss}
	Let $u\colon [0,\infty)\to \R$ be locally absolutely continuous, $N \in (2,\infty)$ and define $2^*\coloneqq 2N/(N-2)$. Then
	\begin{equation}\label{eq:bliss}
		\Big(\sigma_{N-1}\int_0^\infty |u|^{2^*}(t)\, t^{N-1}\, \d t\Big)^\frac{1}{2^*}\le {\rm Eucl}(N,2)\Big(\sigma_{N-1}\int_0^\infty |u'|^{2}(t)\, t^{N-1}\, \d t\Big)^\frac{1}{2},
	\end{equation}
	 whenever one side is finite. Moreover, equality holds if and only if $u$ is of the type:
	 \begin{equation}
	     v_{a,b}(r)\coloneqq a(1+b r^2)^\frac{2-N}{2},\qquad a\in\R, b>0.\label{eq:Bliss extremal}
	 \end{equation}
\end{lemma}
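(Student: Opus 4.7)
The plan is to reduce \eqref{eq:bliss} to a one-dimensional variational problem and solve it via the associated Euler--Lagrange ODE, which has the explicit family $v_{a,b}$ as its decaying solutions. First I would perform standard reductions: by density and truncation, assume $u$ is smooth, nonnegative, and compactly supported in $[0,\infty)$; then apply the monotone rearrangement on the weighted half-line (with measure $t^{N-1}\,\d t$) --- a one-dimensional P\'olya--Szeg\H{o} principle --- which preserves $\int u^{2^*}\,t^{N-1}\,\d t$ and does not increase $\int (u')^2\, t^{N-1}\,\d t$. So it suffices to prove \eqref{eq:bliss} for nonincreasing $u$.

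For the inequality and the sharp constant, in the case $N\in\N$ one can realize \eqref{eq:bliss} as the Euclidean Sobolev inequality \eqref{eq:SobEuclidea} restricted to radial functions $U(x) := u(|x|)$, since $\int_{\R^N}|U|^{2^*}\,\d x = \sigma_{N-1}\int_0^\infty u^{2^*} t^{N-1}\,\d t$ and $\int_{\R^N}|\nabla U|^2\,\d x = \sigma_{N-1}\int_0^\infty (u')^2\, t^{N-1}\,\d t$. For general $N>2$, I would instead work with the minimization problem directly: any extremizer must satisfy the Euler--Lagrange ODE
\[
\big(t^{N-1}\, u'(t)\big)' + \lambda\, t^{N-1}\, u(t)^{2^*-1} = 0,\qquad u'(0)=0,\ u(\infty) = 0,
\]
for some Lagrange multiplier $\lambda>0$. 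A direct substitution verifies that each $v_{a,b}$ solves this ODE, and the resulting integrals, which reduce to Beta-function computations, yield exactly the constant $\eucl(N,2)$ from \eqref{eq:eucln2}.

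Finally, rigidity amounts to classifying positive $C^2$ solutions of the Euler--Lagrange ODE above with $u'(0)=0$ and $u(\infty)=0$. This is an Emden--Fowler-type classification: the substitution $u(t) = t^{-(N-2)/2}\, w(\log t)$ converts the ODE into an autonomous second-order equation for $w$, whose phase portrait has a unique bounded nontrivial orbit up to translation in $\log t$ and multiplicative rescaling (corresponding respectively to the parameters $b$ and $a$). The main obstacle is the general non-integer $N$ case: one cannot invoke a higher-dimensional Sobolev inequality, and the scale invariance of the variational problem creates a lack of compactness for minimizing sequences. The way around this is exactly Bliss's original argument, combining the weighted monotone rearrangement with the ODE classification to force any extremizer into the explicit two-parameter family $v_{a,b}$.
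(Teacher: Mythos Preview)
The paper does not actually prove this lemma: it is stated as a classical result and attributed to Bliss \cite{Bliss30}, with further references to \cite{Aubin82,Talenti76,CianchiFuscoMaggiPratelli09}. So there is no ``paper's own proof'' to compare against; the task is to assess whether your sketch stands on its own.

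Your outline has the right shape but a genuine gap at the central step. For non-integer $N$ you argue via the Euler--Lagrange equation: ``any extremizer must satisfy the ODE \ldots\ and substitution shows $v_{a,b}$ solves it''. This is fine \emph{once you know an extremizer exists}, but you never establish existence. You explicitly acknowledge the obstruction --- scale invariance kills compactness of minimizing sequences --- and then write that ``the way around this is exactly Bliss's original argument''. That sentence is an invocation, not a proof: you have deferred the actual content back to the reference. In particular, neither the rearrangement step nor the ODE classification gives the inequality \eqref{eq:bliss} itself until you have a minimizer in hand (or an alternative direct argument).

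If you want a self-contained route, there are two standard options. One is to follow Talenti: after reducing to nonincreasing $u$, perform the change of variables $s=\omega_N t^N$ and rewrite both integrals so that the problem becomes a one-parameter Hardy--Bliss inequality on $(0,\infty)$ with power weights, for which the sharp constant is computed by an explicit Beta-function calculation (no existence of extremizers is needed; the inequality is proved directly and the extremal family falls out of the equality analysis). The other is a concentration-compactness argument tailored to the scaling group $u\mapsto \lambda^{(N-2)/2}u(\lambda\cdot)$ to restore compactness modulo dilations; this does produce an extremizer, after which your Emden--Fowler analysis is correct. Either way, the missing ingredient is precisely the step you flagged but did not carry out.
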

We recall the sharp Sobolev Euclidean-type inequality \cite{NobiliViolo21} (first appeared in \cite{BaloghKristaly21} for manifolds).

\begin{theorem}\label{thm:sharpSobgrowth}
	Let $\Xdm$ be an $\RCD(0,N)$ space, $N \in (2,\infty)$, with Euclidean volume growth. Then, for every $u \in W^{1,2}_{loc}(\X)$ with $\mm(\{|u|>t\})<+\infty$ for all $t>0$, it holds
	\begin{equation}\label{eq:sharp sobolev growth}
		\|u\|_{L^{2^*}(\mm)}\le \eucl(N,2){\sf AVR(\X)}^{-\frac{1}{N}}\| \nabla  u\|_{L^2(\mm)}.
	\end{equation}
Moreover, \eqref{eq:sharp sobolev growth} is sharp.
\end{theorem}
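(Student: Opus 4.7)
The plan is the classical symmetrization/P\'olya--Szeg\H{o} scheme adapted to the $\RCD$ setting, combining three inputs: the coarea formula \eqref{eq:coarea}, the sharp isoperimetric inequality \eqref{eq:isoperimetry AVR}, and Bliss' one-dimensional inequality (Lemma \ref{lem:bliss}). By Lemma \ref{lem:sobolev loc} it is enough to treat nonnegative $u \in \LIP_c(\X)$. The first step is to introduce the distribution function $\mu(t) := \mm(\{u > t\})$ together with its generalized inverse $u^*(s) := \inf\{t > 0 : \mu(t) \le s\}$, and then define the radial profile on $[0,\infty)$
\[
v(r) := u^*\bigl(\omega_N\,\avr(\X)\, r^N\bigr).
\]
A layer-cake/equimeasurability computation then gives
\[
\|u\|_{L^{2^*}(\mm)}^{2^*} \;=\; \avr(\X)\,\sigma_{N-1}\int_0^\infty v(r)^{2^*}\, r^{N-1}\, \d r.
\]

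For the gradient side I would apply Cauchy--Schwarz to the coarea formula \eqref{eq:coarea} (splitting the integrand against $|\nabla u|^{1/2}\cdot|\nabla u|^{-1/2}$ on $\{|\nabla u|>0\}$), obtaining for a.e.\ $t$
\[
\Per(\{u > t\})^2 \;\le\; \bigl(-\mu'(t)\bigr)\int |\nabla u|\,\d\Per(\{u>t\},\cdot).
\]
Combining with $\Per(\{u>t\})\ge N(\avr(\X)\omega_N)^{1/N}\mu(t)^{(N-1)/N}$ from \eqref{eq:isoperimetry AVR}, integrating in $t$, and performing the change of variable $s=\omega_N\avr(\X)r^N$ (so that $v'(r)=(u^*)'(\omega_N\avr(\X)r^N)\cdot \omega_N\avr(\X) N r^{N-1}$), one lands on the weighted P\'olya--Szeg\H{o} bound
\[
\avr(\X)\,\sigma_{N-1}\int_0^\infty v'(r)^2\, r^{N-1}\,\d r \;\le\; \|\nabla u\|_{L^2(\mm)}^2.
\]
Feeding the two displayed identities into Bliss' inequality \eqref{eq:bliss} applied to $v$ and using the identity $\tfrac12-\tfrac{1}{2^*}=\tfrac{1}{N}$ collects the $\avr(\X)$ powers into exactly the sharp constant $\eucl(N,2)\,\avr(\X)^{-1/N}$, yielding \eqref{eq:sharp sobolev growth}.

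For the sharpness claim, I would exhibit a saturating sequence in one of two ways. A conceptually clean option is to invoke the tangent cone at infinity of $\X$, which by Bishop--Gromov and a standard pmGH compactness argument is an $\RCD(0,N)$ Euclidean metric cone with the same $\avr$; on such a cone the radial Bliss extremals \eqref{eq:Bliss extremal}, viewed as functions of the distance from a tip, saturate both identities above (equality in Bliss combined with equality in \eqref{eq:isoperimetry AVR} for concentric balls), after which one transplants them back to $\X$ at large scales via a cut-off. Alternatively, one tests \eqref{eq:sharp sobolev growth} directly on $\X$ with the diffuse profiles $u_\lambda(x):=(1+\lambda\,\sfd(x,o)^2)^{-(N-2)/2}$ and uses Bishop--Gromov plus the definition of $\avr$ to show the Sobolev ratio tends to $\eucl(N,2)\,\avr(\X)^{-1/N}$ as $\lambda\downarrow 0$. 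The main technical obstacle is the P\'olya--Szeg\H{o} step at the weak-upper-gradient level: one must justify the absolute continuity of $\mu$, the Cauchy--Schwarz manipulation against $d\Per$, and the chain-rule identities for $v$ via the $\RCD$-calculus (Lemma \ref{lem:chain rule} together with the locality/lower semicontinuity recalled in \eqref{eq:W12loc lsc}); once these are in place, the rest is a routine rearrangement calculation in which the factor $\avr(\X)$ appears precisely where the sharp isoperimetric inequality is invoked.
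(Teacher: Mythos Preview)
Your proposal is correct, but the comparison with the paper is somewhat degenerate: the paper's own proof of Theorem~\ref{thm:sharpSobgrowth} is a one-line citation, namely ``Combine \cite[Theorem 1.13]{NobiliViolo21} and Lemma~\ref{lem:sobolev loc}.'' In other words, the inequality (and its sharpness) was already established in the authors' previous work, and the only content added here is the extension from $\LIP_c(\X)$ to $W^{1,2}_{loc}$ via Lemma~\ref{lem:sobolev loc}, which you correctly invoke at the outset.

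What you have written is essentially a reconstruction of the argument underlying the cited result: the P\'olya--Szeg\H{o} inequality via coarea and the sharp isoperimetric inequality~\eqref{eq:isoperimetry AVR}, followed by Bliss~\eqref{eq:bliss}. This is exactly the machinery the present paper develops in Section~\ref{sec:Polyarigidity} (see Proposition~\ref{prop:unbounded euclpolyaszego} and the proof of Theorem~\ref{thm:rigidity PolyaAVR}), so in a sense your proof is already contained in the paper, just not assembled under this theorem. Your Cauchy--Schwarz step is the standard one; the robust version that avoids assuming $|\nabla u|\neq 0$ (apply H\"older on $\{s<u\le t\}$ and differentiate, as in \cite[Prop.~3.12]{MondinoSemola20}) is precisely what the paper cites in Step~1 of Theorem~\ref{thm:rigidity PolyaAVR}, and you are right to flag it as the main technicality. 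For sharpness, your second option (testing with the diffuse bubbles $u_\lambda$ and using Bishop--Gromov as $\lambda\downarrow 0$) is the direct route; the tangent-cone-at-infinity option works too but is heavier than needed.
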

\begin{proof}
Combine \cite[Theorem 1.13]{NobiliViolo21} and Lemma \ref{lem:sobolev loc}.
\end{proof}

For convenience in the rest of this note, we adopt the following notation.
\begin{quote}
      {\bf Convention:} We say that an $\RCD(K,N)$ space $\Xdm$, with $N\in(2,\infty)$,  supports a Sobolev inequality with constants $A>0,B\ge 0$, if, setting $2^*\coloneqq 2N/(N-2)$,
    \begin{equation}
         \| u\|_{L^{2^*}(\mm)}^2 \le A \|\nabla u\|_{L^2(\mm)}^2 + B\| u\|_{L^2(\mm)}^2, \qquad\forall u \in W^{1,2}(\X).\label{eq:convention}\tag{{\bf S}}
    \end{equation} 
\end{quote}  
Inequality \eqref{eq:convention}, if true,  actually holds for all $u \in W^{1,2}_{loc}(\X)$ satisfying $\mm(\{|u|>t\})<+\infty$ for all $t>0$  (recall Lemma \ref{lem:sobolev loc}).

\subsection{Convergence and stability under pmGH-convergence}\label{sec:conv}
We start recalling the notion of \emph{pointed-measure Gromov Hausdorff convergence} (pmGH convergence for short) following \cite{GigliMondinoSavare13}. This presentation is not standard (see e.g. \cite{BBI01,Gromov07}), but it is equivalent in the case of a sequence of uniformly locally doubling metric measure spaces (\cite{GigliMondinoSavare13}).

Set $\bar \N \coloneqq  \N\cup \{\infty\}$ and consider a sequence of pointed metric measure spaces $(\X_n,\sfd_n,\mm_n,x_n)$, with $x_n \in \X_n$. We say that $\X_n$ pmGH-converge to $\X_\infty$ if 
	there exist isometric embeddings $\iota_n: \X_n \to (\Z,\sfd)$, $n \in \bar \N$,  into a common metric space $(\Z,\sfd)$ such that
	\[
	({\iota_n})_\sharp\mm_n \rightharpoonup 	({\iota_\infty})_\sharp\mm_\infty \text{ in duality with $C_{bs}(\Z)$ and } \iota_n(x_n)\to \iota_\infty(x_\infty) \text{ in }\Z.\]
In the case of a  sequence of uniformly locally doubling spaces (as in the case of $\RCD(K,N)$-spaces for fixed $K\in R,N<\infty$) we can also take $(\Z,\sfd)$ to be proper.

It will be also convenient to adopt the so-called \emph{extrinsic approach} and identify $\X_n$ with their isomorphic copies in $(\Z,\sfd)$.  This allows writing $\mm_n\weakto \mm_\infty$ in duality with $C_{bs}(\Z)$.  A choice of space $(\Z,\sfd)$ together with isomorphic copies of the spaces $\X_n$ will be often called  a \emph{realization of the convergence}. 

For the scope of this note, it is important to recall the notion of convergence of functions along pmGH-convergence \cite{Honda15,GigliMondinoSavare13,AmbrosioHonda17} and their  properties. We fix in what follows a pmGH-convergent sequence of pointed metric measure spaces as discussed above.
\begin{definition}\label{def:lpconv}
 Let $p \in (1,\infty)$ and fix a realization  of the convergence in $(\Z,\sfd)$. We say:
\begin{itemize}
\item[{\rm i)}] $f_n\in L^p(\mm_n)$ \emph{converges $L^p$-weak} to $f_\infty\in L^p(\mm_\infty)$, provided $\sup_{n \in \N}\|f_n\|_{L^p(\mm_n)}<\infty$ and $f_n\mm_n \weakto f_\infty\mm_\infty$ in $C_{bs}(\Z)$;
\item[{\rm ii)}] $f_n\in L^p(\mm_n)$ \emph{converges $L^p$-strong} to $f_\infty\in L^p(\mm_\infty)$, provided it converges $L^p$-weak and $\limsup_n \|f_n\|_{L^p(\mm_n)} \le  \|f_\infty\|_{L^p(\mm_\infty)}$;
\item[{\rm iii)}] $f_n \in W^{1,2}(\X_n)$ \emph{converges $W^{1,2}$-weak} to $f_\infty \in W^{1,2}(\X)$ provided it converges $L^2$-weak and $\sup_{n \in \N} \| \nabla f_n \|_{L^2(\mm_n)}<\infty$;
    \item[{\rm iv)}] $f_n \in W^{1,2}(\X_n)$ \emph{converges $W^{1,2}$-strong} to $f_\infty \in W^{1,2}(\X)$ provided it converges $L^2$-strong and $\| \nabla f_n \|_{L^2(\mm_n)} \to \|\nabla f_\infty \|_{L^2(\mm_\infty)}$;
\item[{\rm v)}] $f_n\in L^p(\mm_n)$ \emph{converges $L^p_{loc}$-strong} to $f_\infty\in L^p(\mm_\infty)$, provided $\eta f_n$ converges $L^p$-strong to $\eta f_\infty$ for every $\eta \in C_{bs}(\Z)$.
\end{itemize}
\end{definition}
 Recall from  \cite{Honda15,GigliMondinoSavare13,AmbrosioHonda17} the linearity of convergence: if $f_n,g_n$ converge $L^p$-strong to $f_\infty,g_\infty$, respectively, then
\begin{equation}
    f_n + g_n \text{ converges } L^p\text{-strong to } f_\infty+g_\infty.\label{eq:linearity Lpconvergence}
\end{equation}
We point out the following simple fact: for any $p \in (1,\infty)$ it holds
\begin{equation}
     f_n \ L^p\text{-weak converges to } f_\infty\qquad \Rightarrow\qquad 
     \|f_\infty\|_{L^2(\mm_\infty)}\le \liminf_{n\to\infty} \|f_n\|_{L^2(\mm_n)}.
     \label{eq:L2norm is Lp-lsc}
\end{equation}
Indeed, if the above liminf above is $+ \infty$, then there is nothing to prove. So let us assume it to be finite and also to be a limit, hence $f_n$ is $L^2$-bounded. Then there exists an $L^2$-weak convergent subsequence (see \cite{GigliMondinoSavare13}) to some $h \in L^2(\mm_\infty)$ and in particular  $\|h\|_{L^2(\mm_\infty)}\le \liminf_n \|f_n\|_{L^2(\mm_n)} $.  By uniqueness of limits we have $h = f_\infty$, which shows \eqref{eq:L2norm is Lp-lsc}. 

After the works in \cite{Sturm06I,Sturm06II,LV09,Gigli10,AmbrosioGigliSavare11-2,GigliMondinoSavare13} and thanks to Gromov's precompactness theorem \cite{Gromov07} we have the following precompactness result.
\begin{theorem}\label{thm:pmGHstableRCD}
    Let $(\X_n,\sfd_n,\mm_n,x_n)$ be a sequence of  pointed  $\RCD(K_n,N_n)$ spaces, $n \in \N$, with $\mm_n(B_1(x_n))\in [v^{-1},v]$, for $v >1$ and $K_n\to K\in \R, N_n\to N\in [1,\infty)$. Then, there exists a subsequence $(\X_{n_k},\sfd_{n_k},\mm_{n_k},x_{n_k})$ pmGH-converging to a pointed   $\RCD(K,N)$ space $(\X_\infty,\sfd_\infty,\mm_\infty,x_\infty)$.
\end{theorem}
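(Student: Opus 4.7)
The plan is to invoke Gromov's precompactness theorem to extract a pointed Gromov--Hausdorff limit, upgrade this to a measured limit by a weak compactness argument for the reference measures, and then appeal to the known stability of the $\RCD(K,N)$ condition to identify the limit space as $\RCD(K,N)$.

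First, I would establish uniform local total boundedness of the sequence. The Bishop--Gromov inequality \eqref{eq:Bishop} for $\RCD(K_n, N_n)$ spaces, together with the convergence $K_n \to K$ and $N_n \to N$, yields that for every $R > 0$ there is a common doubling constant $C = C(K, N, R, v)$ and a common upper bound on $\mm_n(B_R(x_n))$ (obtained by iterating the comparison out to radius $R$ starting from the hypothesis $\mm_n(B_1(x_n)) \in [v^{-1}, v]$). By a standard volume-packing argument, uniform doubling implies that for every $R, \eps > 0$ one can cover $B_R(x_n)$ by a number of $\eps$-balls bounded independently of $n$. Hence the sequence $(\X_n, \sfd_n, x_n)$ is precompact in the pointed Gromov--Hausdorff topology, so by Gromov's precompactness theorem \cite{Gromov07} some subsequence admits isometric embeddings $\iota_{n_k} \colon \X_{n_k} \to (\Z, \sfd)$ into a common proper metric space with $\iota_{n_k}(x_{n_k}) \to x_\infty$ in $\Z$ and $\iota_{n_k}(\X_{n_k})$ converging in the local Hausdorff sense to a closed set $\X_\infty \subset \Z$.

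Next I would upgrade to pmGH convergence by selecting a weak limit of the pushforward measures. Setting $\mu_k \coloneqq (\iota_{n_k})_\sharp \mm_{n_k}$, the uniform bound $\mu_k(B_R(x_\infty)) \le C(K, N, R, v)$ for large $k$ shows that $\{\mu_k\}$ is locally uniformly finite on $(\Z, \sfd)$. By the Banach--Alaoglu theorem applied locally (or directly by Prokhorov, via a diagonal argument over balls $B_R(x_\infty)$), a further subsequence satisfies $\mu_k \rightharpoonup \mu_\infty$ in duality with $C_{bs}(\Z)$. Since $\supp(\mu_k) \subset \X_{n_k}$ and $\X_{n_k} \to \X_\infty$ locally Hausdorff, one checks $\supp(\mu_\infty) \subset \X_\infty$. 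The lower bound $\mm_{n_k}(B_1(x_{n_k})) \ge v^{-1}$ passes to the limit (up to a negligible boundary issue handled by monotonicity in $R$), guaranteeing $\mu_\infty \neq 0$; relabelling yields a pointed metric measure space $(\X_\infty, \sfd_\infty, \mm_\infty, x_\infty)$ and pmGH convergence along the extracted subsequence.

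Finally, I would identify the limit as an $\RCD(K, N)$ space. The $\CD(K_n, N_n)$ condition is stable under pmGH convergence by the results of Sturm \cite{Sturm06I,Sturm06II} and Lott--Villani \cite{LV09}; since $K_n \to K$ and $N_n \to N$, a direct approximation argument (passing to the limit in the convexity inequalities for the $(K_n, N_n)$-R\'enyi entropies along optimal plans) gives that $(\X_\infty, \sfd_\infty, \mm_\infty)$ satisfies $\CD(K, N)$. Stability of infinitesimal Hilbertianity under pmGH convergence with a $\CD$-bound, proved in \cite{GigliMondinoSavare13} via Mosco convergence of the Cheeger energies, then ensures that the limit is infinitesimally Hilbertian, hence $\RCD(K, N)$. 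The main technical obstacle is precisely this last step: one needs $\Gamma$-convergence (indeed Mosco convergence in the pmGH sense of \cite{GigliMondinoSavare13, AmbrosioHonda17}) of the Cheeger energies in order to deduce that the parallelogram identity passes to the limit, and this requires the machinery of varying-space Sobolev calculus rather than a soft compactness argument.
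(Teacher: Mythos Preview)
Your proposal is correct and aligns with the paper's treatment: the paper does not give an explicit proof of this theorem but states it as a consequence of Gromov's precompactness theorem \cite{Gromov07} together with the stability results in \cite{Sturm06I,Sturm06II,LV09,Gigli10,AmbrosioGigliSavare11-2,GigliMondinoSavare13}, which is exactly the route you sketch (Gromov precompactness via Bishop--Gromov doubling, weak compactness of measures, and stability of the $\CD$ and infinitesimal Hilbertianity conditions).
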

We report from \cite{GigliMondinoSavare13} the  Mosco-convergence of the Cheeger energies for pmGH-converging $\RCD$-spaces: if $f_n$ is $L^2$-weak convergent to $f_\infty$, then
\begin{equation}
   \rmCh(f_\infty) \le \liminf_{n\to\infty} \rmCh(f_n). \label{eq:Gamma liminf}
\end{equation}
Moreover, for any $f_\infty \in L^2(\mm_\infty)$, there exists $f_n \in L^2(\mm_n)$ converging $L^2$-strong to $f_\infty$ and
\[  \limsup_{n\to\infty}  \rmCh(f_n) \le \rmCh(f_\infty).
\]
In particular, the above is a limit.

\section{P\'olya-Szeg\H{o} inequality}\label{sec:Polyarigidity}

\subsection{Non-compact case}
 In this part we extend to the non-compact case the P\'olya-Szeg\H{o} inequality of Euclidean-type obtained in \cite{NobiliViolo21}.

We need first to recall basic notations and facts about monotone decreasing rearrangements for functions in a m.m.s.\ $\Xdm$ (for more details we refer to \cite{MondinoSemola20}). Let $\Omega\subseteq \X$ be an open set (possibly unbounded) and $u:\Omega\to[0,+\infty)$ be a Borel function such that $\mm( \{u>t\})<\infty$ for any $t>0$. We define  $\mu:[0,+\infty)\to[0,\infty)$, the distribution function of $u$ as  $\mu(t)\coloneqq \mm(\{u> t\})$. For $u$ and $\mu$ as above, let us consider the generalized inverse $u^{\#}$ of $\mu$:
\begin{equation*}
	u^{\#}(s)\coloneqq 
	\begin{cases*}
		{\rm ess}\sup u & \text{if $s=0$},\\
		\inf\left\lbrace t:\mu(t)<s \right\rbrace &\text{if $s>0$}. 
	\end{cases*}
\end{equation*}
Note that $u^{\#}$ is non-increasing.
In this note, we will perform rearrangements into the \emph{Euclidean model space} $I_N\coloneqq ([0,\infty),|.|,\mm_{N})$, equipped with the standard Euclidean distance and weighted measure $\mm_{N}\coloneqq  \sigma_{N-1} t^{N-1}\mathcal L^1$, for $N \in(1,\infty)$. For any open set $\Omega \subset \X$ we set  $\Omega^*\coloneqq  [0,r]$ with $\mm_{N}([0,r])= \mm(\Omega)$ (i.e.\ $r^N=\omega_N^{-1}\mm(\Omega)$), with the convention $\Omega^*=[0,\infty)$ if $\mm(\Omega)=+\infty.$ The Euclidean monotone rearrangement $u^*_{N} : \Omega^* \rightarrow \R^+$ is then defined by
    \[ u^*_{N}(x) \coloneqq  u^\#(\mm_{N}([0,x]))=u^\#(\omega_Nx^N ), \qquad \forall x \in \Omega^*.\]
   Note that $u^*_{N}$ is always a non-increasing function, since so is $u^{\#}$. To lighten the notation, we shall often drop the subscript and just write $u^*$. We collect basic facts about rearrangements, that can be proved by standard arguments as in the Euclidean case (see, e.g.\ \cite{ciao}):
\begin{align}
    &u \le v \Rightarrow u^*\le v^*, & &\label{eq:order preserving} \\
& (\varphi(u))^*=\varphi(u^*), & & \forall \varphi: [0,\infty) \to [0,\infty) \text{ non-decreasing.}\label{eq:composition rearr} \\
 &    \| u\|_{L^p(\mm)} = \| u^*\|_{L^p(\mm_N)}, & & \forall u \in L^p(\Omega). \label{eq:equaleuclLpnorm} 
\end{align}
\begin{lemma}\label{lem:rearrmonotoneconvergence}
    Let $\Xdm$ be a metric measure space and $N \in (1,\infty)$. Let $(u_n) \colon \X \to \R^+$ be an non-decreasing sequence of Borel functions.  Denote $u \coloneqq  \sup_n u_n$ and suppose that $\mm(\{u>t\})<+\infty$ for every $t>0$. Then, $u_n^*:I_{N} \to \R^+$ (which exists by the assumptions) is a monotone non-decreasing sequence and $\lim_n u_n^*=u^*$ a.e. in $[0,\infty).$
\end{lemma}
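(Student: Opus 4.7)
First, the order-preserving property \eqref{eq:order preserving} together with $u_n\le u_{n+1}$ immediately gives that $\{u_n^*\}_n$ is a non-decreasing sequence; since also $u_n\le u$, we get $u_n^*\le u^*$, so the pointwise limit $h^*(x)\coloneqq \lim_n u_n^*(x)$ exists and $h^*\le u^*$. The goal is therefore to show $h^*=u^*$ a.e. on $[0,\infty)$. Since $u_n^*(x)=u_n^\#(\omega_N x^N)$ and the map $x\mapsto \omega_N x^N$ is an absolutely continuous diffeomorphism of $(0,\infty)$, it is equivalent to prove that $u_n^\#\to u^\#$ almost everywhere on $(0,\infty)$.

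Next, I would pass to the distribution functions $\mu_n(t)\coloneqq \mm(\{u_n>t\})$ and $\mu(t)\coloneqq \mm(\{u>t\})$. Since $u_n\uparrow u$, the super-level sets satisfy $\{u_n>t\}\uparrow \{u>t\}$ for every $t>0$, so by monotone convergence of measure $\mu_n(t)\uparrow \mu(t)$ for every $t>0$ (using the standing hypothesis $\mu(t)<+\infty$).

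The heart of the argument is then the following. Fix $s>0$ and let $h(s)\coloneqq \lim_n u_n^\#(s)\le u^\#(s)$. Suppose by contradiction that the set $S\coloneqq \{s>0\colon h(s)<u^\#(s)\}$ has positive Lebesgue measure. For $s\in S$ and any $t\in(h(s),u^\#(s))$ one has $u_n^\#(s)<t$ for all $n$, which by definition of the generalized inverse forces some $\tau<t$ with $\mu_n(\tau)<s$; the monotonicity of $\mu_n$ then gives $\mu_n(t)<s$, so letting $n\to\infty$ yields $\mu(t)\le s$. On the other hand $t<u^\#(s)$ implies $\mu(t)\ge s$ directly from the definition of $u^\#$. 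Hence $\mu(t)=s$ on the whole non-degenerate interval $(h(s),u^\#(s))$. This means that each $s\in S$ is a ``flat value'' of the monotone function $\mu$. Since $\mu$ is non-increasing, the set of its flat values is at most countable, hence $\mathcal{L}^1(S)=0$, a contradiction.

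Combining these steps, $u_n^\#\to u^\#$ a.e. on $(0,\infty)$, and by the change of variable mentioned above, $u_n^*\to u^*$ a.e. on $[0,\infty)$. The only point requiring some care is the flat-value observation in the contradiction argument; the rest is essentially bookkeeping about rearrangements and monotone convergence of super-level sets.
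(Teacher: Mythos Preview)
Your argument is correct. The monotonicity of $\{u_n^*\}$ and the bound $h^*\le u^*$ are immediate, the convergence $\mu_n(t)\uparrow\mu(t)$ is standard, and your contradiction step is sound: from $u_n^\#(s)<t$ you correctly extract $\mu_n(t)<s$, hence $\mu(t)\le s$ in the limit, while $t<u^\#(s)$ gives $\mu(t)\ge s$; thus $\mu\equiv s$ on $(h(s),u^\#(s))$ and $s$ is a flat value of the monotone function $\mu$, of which there are only countably many.

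The paper takes a different, slightly slicker route. Rather than analysing where $h^*$ and $u^*$ can disagree, it shows directly that $g\coloneqq\sup_n u_n^*$ and $u^*$ are \emph{equimeasurable} with respect to $\mm_N$: from $\{u_n^*>t\}\uparrow\{g>t\}$ and $\{u_n>t\}\uparrow\{u>t\}$ one gets
\[
\mm_N(\{g>t\})=\lim_n\mm_N(\{u_n^*>t\})=\lim_n\mm(\{u_n>t\})=\mm(\{u>t\})=\mm_N(\{u^*>t\}),
\]
and then invokes the general fact that two non-increasing equimeasurable functions on $[0,\infty)$ agree a.e. Your argument is essentially a hands-on proof of this last fact in the situation at hand (the ``flat values are countable'' observation is exactly why equimeasurable monotone functions coincide a.e.). So your proof is more self-contained and avoids the external citation, at the cost of a few more lines; the paper's proof is shorter but relies on a black box from rearrangement theory.
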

\begin{proof}
The fact that $(u_n^*)$ is monotone non-decreasing follows by the order preserving property of the rearrangement \eqref{eq:order preserving}.  Set $g \coloneqq  \sup_n u_n^*=\lim_n u_n^*$ pointwise on $[0,\infty)$. In particular $\{u_n^* > t\}\uparrow \{g>t\}$ and $\{u_n > t\}\uparrow \{u>t\}$ for any $t >0$. Therefore
\[
\mm_N( \{ g>t \} ) = \lim_n \mm_N( \{ u_n^* >t \} ) = \lim_n \mm ( \{ u_n >t \} ) = \mm ( \{ u >t \} )=\mm_N(\{u^*>t\}).
\]
So $g,u^*:[0,\infty) \to [0,+\infty]$ are equimeasurable and non-increasing (indeed $g$ is the supremum of non-increasing functions), therefore they coincide a.e.\ (see e.g.\ the proof \cite[Prop. 1.1.4]{ciao}).
\end{proof}
We will need the following approximation result to pass from the bounded to the unbounded case in the Euclidean P\'olya-Szeg\H{o} inequality. It will be needed also in other parts of this note.
\begin{lemma}\label{lem:el trick}
Let $\Xdm$ be a metric measure space and $u \in W^{1,2}_{loc}(\X)$ such that $\mm(\{|u|>t\})<+\infty$ for all $t>0$ and $|\nabla u|\in L^2(\mm).$ Then there exists a sequence $u_n \in W^{1,2}(\X)$ of functions with bounded support, such that $u_n \to u$ $\mm$-a.e.\ and $|\nabla (u_n- u)|\to 0$ in $L^2(\mm).$

Moreover if $u \ge 0$ (resp.\ $u \in L^p(\mm)$, $p\in[1,\infty)$) we can take $(u_n)$ non-decreasing (resp.\ so that $u_n\to u$ in $L^p(\mm)$).
\end{lemma}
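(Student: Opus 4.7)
The strategy is the standard two-step approximation: first truncate $u$ in its values to make its support have finite measure, then spatially cut off with a Lipschitz function to obtain bounded support. The assumption $\mm(\{|u|>t\})<\infty$ for every $t>0$ is used in the first step, while $|\nabla u|\in L^2(\mm)$ is used to control the error in the second step via dominated convergence. Up to splitting $u=u^+-u^-$ we may assume $u\ge 0$, so that the value-truncation can be chosen monotone.

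\smallskip

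For the first step, define $v_n\coloneqq(u-1/n)^+\wedge n$. By Lemma \ref{lem:chain rule} applied to $\phi_n(t)\coloneqq(t-1/n)^+\wedge n$ (which is $1$-Lipschitz and $\in {\sf AC}_{loc}$), $v_n\in W^{1,2}_{loc}(\X)$ with
\[
|\nabla v_n|=|\nabla u|\,\mathbbm{1}_{\{1/n<u<n+1/n\}},\qquad \mm\text{-a.e.},
\]
in particular $|\nabla(v_n-u)|=|\nabla u|\,\mathbbm{1}_{\{u\le 1/n\}\cup\{u\ge n+1/n\}}$ $\mm$-a.e., where we have used the locality of minimal weak upper gradients to handle $\{u=0\}$. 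By dominated convergence (dominating function $|\nabla u|\in L^2(\mm)$) this tends to $0$ in $L^2(\mm)$ as $n\to\infty$. Moreover $\{v_n\neq 0\}\subset \{u>1/n\}$ has \emph{finite} $\mm$-measure but need not be bounded.

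\smallskip

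For the second step, fix $x_0\in\X$ and consider the $(1/R)$-Lipschitz cutoffs $\eta_R(x)\coloneqq(2-\sfd(x,x_0)/R)^+\wedge 1$, which satisfy $\eta_R\equiv 1$ on $B_R(x_0)$, $\supp(\eta_R)\subset \overline{B_{2R}(x_0)}$, and are pointwise non-decreasing in $R$. Set $u_{n,R}\coloneqq \eta_R v_n$, which is Lipschitz with bounded support, hence in $W^{1,2}(\X)$. Using the Leibniz rule,
\[
|\nabla(u_{n,R}-v_n)|\le (1-\eta_R)|\nabla v_n|+|v_n||\nabla\eta_R|.
\]
The first term is dominated by $\mathbbm{1}_{\X\setminus B_R(x_0)}|\nabla u|\to 0$ in $L^2(\mm)$ as $R\to\infty$. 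The second term satisfies
\[
\int |v_n|^2|\nabla\eta_R|^2\,\d\mm\le \frac{n^2}{R^2}\,\mm(\{u>1/n\})\xrightarrow[R\to\infty]{}0,
\]
using crucially that $\mm(\{u>1/n\})<\infty$ for fixed $n$. Hence $\limsup_{R\to\infty}\|\nabla(u_{n,R}-v_n)\|_{L^2(\mm)}=0$ for every $n$, and a diagonal argument produces a sequence $R_n\uparrow\infty$ such that $u_n\coloneqq u_{n,R_n}$ satisfies $|\nabla(u_n-u)|\to 0$ in $L^2(\mm)$. The a.e.\ convergence $u_n\to u$ is immediate, since for $\mm$-a.e.\ $x$ with $u(x)>0$ eventually $v_n(x)=u(x)-1/n$ and $\eta_{R_n}(x)=1$, while $u_n\equiv 0\equiv u$ on $\{u=0\}$.

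\smallskip

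The additional properties are now easy. If $u\ge 0$, then $v_n$ is pointwise non-decreasing in $n$, and since we can take $R_n$ increasing, the product $u_n=\eta_{R_n}v_n$ is also non-decreasing by monotonicity of $\eta_R$ in $R$. For the $L^p$-statement, assuming $u\in L^p(\mm)$, write $\|u_n-u\|_{L^p}\le \|(1-\eta_{R_n})v_n\|_{L^p}+\|v_n-u\|_{L^p}$; the first term is bounded by $\|u\,\mathbbm{1}_{\X\setminus B_{R_n}(x_0)}\|_{L^p}\to 0$ by dominated convergence as $R_n\to\infty$, while $|v_n-u|\le 1/n+(u-n)^+$ tends to $0$ a.e.\ and is dominated by $u\in L^p$, so dominated convergence gives $\|v_n-u\|_{L^p}\to 0$; the diagonal selection of $R_n$ is refined to additionally guarantee the $L^p$ smallness. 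No step presents a real difficulty; the only minor care needed is the diagonal extraction simultaneously enforcing the gradient convergence and, optionally, the $L^p$ convergence and monotonicity.
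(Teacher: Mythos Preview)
Your proof is correct and follows essentially the same two-step approach as the paper (truncate in value, then cut off spatially, then diagonalize), with the same monotone cutoffs $\eta_R$ and the same handling of the monotonicity and $L^p$ clauses. Two minor slips worth fixing: Lemma~\ref{lem:chain rule} requires $u\in\LIP_{loc}$ (and a proper space), so for your $1$-Lipschitz $\phi_n$ you should invoke the ordinary Lipschitz chain rule instead; and $u_{n,R}=\eta_R v_n$ is not Lipschitz in general, only in $W^{1,2}(\X)$ with bounded support (which is all you need).
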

\begin{proof}
We first deal with the case $u\ge 0$ and $u \in L^\infty(\mm)$ with $\mm(\supp (u))<+\infty.$ Fix $x\in \X$ and consider the sequence $(\eta_n) \subset \LIP(\X)$ given by  $\eta_n(.)\coloneqq (2-\frac{\sfd(.,x)}{n})^+\wedge1$. Note that $(\eta_n)$ is non-decreasing with $\LIP(\eta_n)\le n^{-1}$, $\eta_n=1$ in $B_n(x)$ and $\supp(\eta_n)\subset B_{2n}(x)$. Take $u_n \coloneqq u \eta_n \in W^{1,2}(\X)$ with bounded support. Clearly $u_n \uparrow u$ pointwise and if $u \in L^p(\mm)$ also $u_n \to u $ in $L^p(\mm)$ by dominated convergence. Moreover, by locality
\[
\int |\nabla (u-u\eta_n)|^2\d \mm\le 2\int_{B_n^c(x)}|\nabla u|^2+|\nabla (\eta_n u)|^2 \d \mm,
\]
and by the Leibniz rule 
 \begin{equation*}
     \begin{aligned}
       \| \nabla (\eta_n u)\|_{L^2(B_n(x)^c)}&\le 2n^{-1} \|u\|_{L^\infty(\mm)} \mm(\supp(u))^{\frac12} + \|\eta_n |\nabla u|\|_{L^2(B_n^c(x))} \\ 
      &\le  2n^{-1} \|u\|_{L^\infty(\mm)} \mm(\supp(u))^{\frac12}  + \| \nabla u \|_{L^2 (B_n^c(x))}\to 0.
     \end{aligned}
 \end{equation*}
This proves that $|\nabla u-\nabla(u_n\eta_n)|\to 0$ in $L^2(\mm).$

If  $u\ge 0$,  take  $u_k\coloneqq ((u-1/k)^+)\wedge k$, $k \in \N$, which is a non-decreasing sequence of functions. Clearly 
\[
\int |\nabla (u- u_k)|^2\d \mm\le \int_{\{0<u<1/k\}} |\nabla u|^2\d \mm \to 0,
\]
by dominated convergence. Moreover, since $u_k \in L^\infty(\mm)$ and $\mm(\supp(u_k))<+\infty,$ the conclusion in this case follows from  the previous one and a diagonal argument (multiplying by the functions $\eta_n$). Monotonicity of the sequence is preserved because $\eta_n f\le \eta_{\bar n}g$ \  $\mm$-a.e.\ for every $\bar n>n$ and assuming $0\le f\le g$ $\mm$-a.e.. The pointwise $\mm$-a.e.\ convergence  is also kept, since it remains true on every ball, recalling that $\eta_n=1$ in $B_n(x)$. 

Finally for a general $u$ we approximate first $u^+$ and then $u^-$ by functions $u_n$ and $v_n$ respectively as we did in the above steps. Clearly if $u \in L^p(\mm)$ then $u_n-v_n \to u$ in $L^p(\mm)$. Moreover by construction we have that $u_n-v_n=\nchi_{\{u>0\}}u_n-\nchi_{\{u<0\}}v_n$. Therefore 
$|\nabla (u-(u_n-v_n))|=|\nabla (u^+-u_n)|+|\nabla (u^--v_n)|\to 0$ in $L^2(\mm)$. This concludes the proof also in this case.
\end{proof}
We can now prove the P\'olya-Szeg\H{o} inequality in the non compact case.
    \begin{proposition}\label{prop:unbounded euclpolyaszego}
Let $\Xdm$ be an $\RCD(0,N)$ space for some $N \in (1,\infty)$ with ${\sf AVR}(\X)>0$. Let $u \in W^{1,2}_{loc}(\X)$ be non-negative and such that $\mm( \{u>t\})<\infty$ for any $t>0$. Then, 
\begin{equation}\int|\nabla  u|^2\d \mm\ge {\sf AVR}(\X)^{2/N}\int_0^\infty|\nabla u^*|^2\d \mm_N, \label{eq:unbounded euclpolyaszego}
\end{equation}
meaning that, if the left hand side is finite, then $u^* \in W^{1,2}_{loc}(I_N)$ and \eqref{eq:unbounded euclpolyaszego} holds.
\end{proposition}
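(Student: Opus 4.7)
The plan is to reduce to the bounded-support case, for which the Euclidean P\'olya--Szeg\H{o} inequality has already been established in \cite{NobiliViolo21}, and then pass to the limit using the monotone convergence of rearrangements together with the lower semicontinuity of the minimal weak upper gradient.

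\emph{Step 1 (approximation).} Since $u \ge 0$ and $|\nabla u|\in L^2(\mm)$, Lemma \ref{lem:el trick} yields a non-decreasing sequence $u_n \in W^{1,2}(\X)$ of functions with bounded support such that $u_n \to u$ $\mm$-a.e.\ and $|\nabla(u - u_n)|\to 0$ in $L^2(\mm)$. In particular,
\begin{equation*}
\lim_{n\to\infty}\int |\nabla u_n|^2\,\d\mm = \int|\nabla u|^2\,\d\mm.
\end{equation*}
Moreover, having bounded support, each $u_n$ trivially satisfies $\mm(\{u_n>t\})<\infty$ for every $t>0$, and $u_n \in W^{1,2}(\X)$ with $\mm(\supp u_n)<\infty$.

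\emph{Step 2 (bounded-support P\'olya--Szeg\H{o}).} For each fixed $n\in\N$, the Euclidean P\'olya--Szeg\H{o} inequality proved for functions with bounded support in \cite{NobiliViolo21} applies to $u_n$ and gives
\begin{equation*}
\int |\nabla u_n|^2\,\d\mm \;\ge\; {\sf AVR}(\X)^{2/N}\int_0^\infty |\nabla u_n^*|^2\,\d \mm_N.
\end{equation*}
Since $(u_n)$ is non-decreasing and $\sup_n u_n = u$ with $\mm(\{u>t\})<\infty$ for every $t>0$, Lemma \ref{lem:rearrmonotoneconvergence} ensures that $u_n^* \to u^*$ a.e.\ on $[0,\infty)$.

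\emph{Step 3 (passage to the limit).} The bound in Step 2 combined with Step 1 shows that $\liminf_n \int_0^\infty |\nabla u_n^*|^2\,\d \mm_N <\infty$. Since $u_n^* \in W^{1,2}_{loc}(I_N)$ and $u_n^* \to u^*$ pointwise a.e., the semicontinuity property \eqref{eq:W12loc lsc} applied on the one-dimensional weighted model $I_N = ([0,\infty),|\cdot|,\mm_N)$ (which is itself an $\RCD(0,N)$ space, so the framework of Section \ref{sec:prelim} applies) yields $u^* \in W^{1,2}_{loc}(I_N)$ with
\begin{equation*}
\int_0^\infty|\nabla u^*|^2\,\d\mm_N \;\le\; \liminf_{n\to\infty}\int_0^\infty|\nabla u_n^*|^2\,\d\mm_N.
\end{equation*}
Chaining the three estimates gives \eqref{eq:unbounded euclpolyaszego}.

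\emph{Expected main obstacle.} The only non-routine point is guaranteeing that the lower semicontinuity step can be applied to the rearrangements. This is handled by the a.e.\ pointwise convergence $u_n^*\to u^*$ supplied by Lemma \ref{lem:rearrmonotoneconvergence} and by the uniform $L^2$-bound on $|\nabla u_n^*|$ coming from Step 2 together with Step 1. All other ingredients are direct consequences of the approximation lemma and of the bounded-support result already in the literature.
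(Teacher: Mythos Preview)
Your proposal is correct and follows essentially the same approach as the paper: approximate via Lemma~\ref{lem:el trick}, apply the bounded-support P\'olya--Szeg\H{o} inequality from \cite{NobiliViolo21}, use Lemma~\ref{lem:rearrmonotoneconvergence} for the convergence of rearrangements, and conclude by the lower semicontinuity \eqref{eq:W12loc lsc}. The only cosmetic difference is that the paper first disposes of the case $\|\nabla u\|_{L^2(\mm)}=\infty$ explicitly, whereas you implicitly assume finiteness from the outset in line with the statement.
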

\begin{proof}
First, if $\|\nabla u\|_{L^2(\mm)}=\infty$, there is nothing to prove. So, suppose $|\nabla u|\in L^2(\mm)$. By Lemma \ref{lem:el trick} there exists a non-decreasing sequence $u_n \in W^{1,2}(\X)$ of functions with bounded support, such $u_n \to u$ $\mm$-a.e.\ and $\|\nabla u_n\|_{L^2(\mm)}\to \|\nabla u\|_{L^2(\mm)}$. Applying the P\'olya-Szeg\H{o} inequality for bounded domains in \cite[Theorem 3.6]{NobiliViolo21}, we have $u_n^* \in  W^{1,2}(I_N)$ and
 \[ \int|\nabla   u_n|^2\d \mm\ge {\sf AVR}(\X)^{2/N}\int|{\nabla  u_n^*}|^2\d \mm_N.\]
Moreover by  Lemma \ref{lem:rearrmonotoneconvergence}  the sequence $u_n^*$ is non-increasing and $\sup_n u_n^* = u^*$ pointwise. The proof is now concluded since we have that $u^* \in W^{1,2}_{loc}(I_N)$ and $\liminf_n \int |\nabla u_n^*|^2 \, \d \mm_N \ge \int |\nabla  u^*|^2\,\d \mm_N$ by semicontinuity (recall \eqref{eq:W12loc lsc}).
 \end{proof}

\subsection{Rigidity}
In this section, we prove the rigidity in the P\'olya-Szeg\H{o} inequality of Proposition \ref{prop:unbounded euclpolyaszego}. The idea is that if equality  in \eqref{eq:unbounded euclpolyaszego} is attained, the superlevel sets are isoperimetric sets, so Theorem \ref{thm:rigidity ISOAVR} implies that the space is a cone.  This line of thoughts follow classical arguments that date back to the work of \cite{PolSzeg51} in Euclidean contexts and \cite{BerMey82} for manifolds with Ricci curvature lower bounds.

Moreover, under additional regularity, the function can also be proven to be radial. A similar rigidity result was proved in \cite{MondinoSemola20} in the compact case for a different  P\'olya-Szeg\H{o} inequality.

\begin{theorem}[Rigidity of the Euclidean P\'olya-Szeg\H{o} inequality]\label{thm:rigidity PolyaAVR}
Let $(\X,\sfd,\mm)$ be an $\RCD(0,N)$ space for some $N \in (1,\infty)$ with ${\sf AVR}(\X)>0$. Suppose equality holds in \eqref{eq:unbounded euclpolyaszego} (with both sides finite) for $u \in \LIP_{loc}(\X)$  non-negative satisfying $u(x)\to 0$  as $\sfd(x,z)\to \infty,$ for $ z \in \X$ 
and with  $ (u^*)'\neq 0$ a.e. in $\{u^*>0\}$. Then, $\X$ is isomorphic to an $N$-Euclidean metric measure cone.

Moreover, if $|\nabla u|\neq 0$ $\mm$-a.e.\ on $\{u>0\}$, then $u$ is radial, i.e.\  
$$u(x) = u^*\circ  \avr(\X)^{\frac1N} \sfd(x,x_0)$$ for a suitable tip $x_0$ of $\X$.
\end{theorem}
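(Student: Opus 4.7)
The plan is to run the standard coarea $+$ Cauchy-Schwarz $+$ isoperimetric derivation of the P\'olya-Szeg\H{o} inequality and then unpack what the equality forces. Let $\mu(t)\coloneqq \mm(\{u>t\})$; the decay $u(x)\to 0$ at infinity gives $\mu(t)<\infty$ for every $t>0$, while $(u^*)'\ne 0$ a.e.\ on $\{u^*>0\}$ gives that $\mu$ is strictly decreasing on the range of $u$. Applying the coarea formula \eqref{eq:coarea} with the choices $g\equiv|\nabla u|$ and $g\equiv|\nabla u|^{-1}\nchi_{\{|\nabla u|>0\}}$ yields, for a.e.\ $t>0$,
\[
-\mu'(t)=\int|\nabla u|^{-1}\,\d\Per(\{u>t\},\cdot),\qquad \int|\nabla u|^2\,\d\mm=\int_0^{\infty}\!\!\int|\nabla u|\,\d\Per(\{u>t\},\cdot)\,\d t.
\]
Cauchy-Schwarz against $\Per(\{u>t\},\cdot)$ bounds the inner integral from below by $\Per(\{u>t\})^2/(-\mu'(t))$; the isoperimetric inequality \eqref{eq:isoperimetry AVR} bounds $\Per(\{u>t\})$ from below; integrating in $t$ and carrying out the one-dimensional computation on $I_N$ identifies the resulting lower bound exactly with $\avr(\X)^{2/N}\int_0^\infty|(u^*)'|^2\,\d\mm_N$. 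The standing equality therefore forces both: (a) equality in \eqref{eq:isoperimetry AVR} for $\{u>t\}$ at a.e.\ regular $t$ in the range of $u^*$, and (b) $|\nabla u|$ constant on $\{u=t\}$ for those $t$.

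The first conclusion is immediate from (a). Choose any such regular $t>0$ (the set of them has positive measure thanks to $(u^*)'\ne 0$); the superlevel set $\{u>t\}$ is bounded, since $u\to 0$ at infinity, and Borel. Theorem \ref{thm:rigidity ISOAVR} applies and yields simultaneously that $\X$ is isomorphic to an $N$-Euclidean metric measure cone and that $\{u>t\}$ is $\mm$-equivalent to a metric ball $B_{r(t)}(p_t)$ centred at some tip $p_t$ of $\X$. Since at any tip of such a cone $\mm(B_r(p_t))=\avr(\X)\omega_N r^N$, one reads off $r(t)=\bigl(\mu(t)/(\avr(\X)\omega_N)\bigr)^{1/N}$.

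For the radiality statement assume in addition $|\nabla u|\ne 0$ a.e.\ on $\{u>0\}$, so that the analysis above can be carried out at a.e.\ $t>0$. The remaining task is to show the tip $p_t$ can be chosen independent of $t$: fix a regular $t_0$, set $p_0\coloneqq p_{t_0}$, and use the monotone nesting $\{u>t\}\subseteq\{u>s\}$ for $s<t$, the quantitative matching of the radii computed above, and the local Lipschitz regularity of $u$, to propagate the selection $p_t\equiv p_0$ across the connected range of $u^*$. Granted this, $\{u>t\}=B_{r(t)}(p_0)$ $\mm$-a.e.\ simultaneously for a.e.\ $t$, and the layer-cake identity $u(x)=\sup\{t:x\in\{u>t\}\}$, together with the fact that on $I_N$ the rearrangement satisfies $\{u^*>t\}=[0,\avr(\X)^{1/N}r(t)]$, delivers $u(x)=u^*\bigl(\avr(\X)^{1/N}\sfd(x,p_0)\bigr)$.

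The main obstacle is precisely the concentricity step in the previous paragraph: an $N$-Euclidean cone may have many tips (every point of $\R^N$ is a tip of $\R^N$), and from the isoperimetric information alone one cannot deduce \emph{a priori} that nested isoperimetric balls share a centre. The resolution must combine the nesting of the $\{u>t\}$, the quantitative equality of their radii with those of $\{u^*>t\}$, and the Lipschitz regularity of $u$; a direct geometric argument rules out two nested isoperimetric balls at distinct tips whose radii are constrained by the Lipschitz interpolation dictated by $u$ on the annular region in between, thereby forcing the measurable selection $t\mapsto p_t$ to be constant.
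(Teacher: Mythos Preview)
Your overall strategy matches the paper's: derive P\'olya--Szeg\H{o} via coarea $+$ Cauchy--Schwarz $+$ isoperimetry, read off from equality that almost every superlevel set is isoperimetric (hence $\X$ is a cone by Theorem~\ref{thm:rigidity ISOAVR}), and then argue concentricity of the resulting balls. Two points deserve comment.

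First, a minor technical issue: the identity $-\mu'(t)=\int|\nabla u|^{-1}\,\d\Per(\{u>t\},\cdot)$ you invoke at the outset requires $|\nabla u|\ne 0$ $\mm$-a.e.\ on $\{u>0\}$, which is only assumed for the \emph{second} conclusion. For the first conclusion (cone only), the paper derives the key H\"older-type inequality $-\phi'(t)\ge(-\psi'(t))^2/(-\mu'(t))$ without that identity, by applying H\"older to integrals first and differentiating afterwards (following Mondino--Semola). The paper also truncates to $v_n=(u-1/n)^+$ to ensure compact support before applying coarea; your direct application to $u\in\LIP_{loc}$ needs this justification.

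Second, and this is the real gap: your concentricity step is not a proof. You correctly flag it as the main obstacle, but ``a direct geometric argument'' using ``Lipschitz interpolation dictated by $u$'' does not work --- the Lipschitz constant of $u$ alone carries no information about the relative position of the tips $p_t$. The paper's resolution is specific and uses more than you have extracted from the equality case. From the equality in Cauchy--Schwarz one knows not only that $|\nabla u|$ is constant on $\{u=t\}$ but its \emph{exact value}, namely $\avr(\X)^{1/N}|(u^*)'((u^*)^{-1}(t))|$. This is precisely what makes the auxiliary function
\[
f\coloneqq \avr(\X)^{-1/N}\,(u^*)^{-1}\circ u
\]
satisfy $|\nabla f|=1$ $\mm$-a.e.\ on $\{u>0\}$ (via the chain rule of Lemma~\ref{lem:chain rule}), and hence be locally $1$-Lipschitz by Sobolev-to-Lipschitz. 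One then shows via a coarea ODE that $\{f<t\}=B_t(x_t)$ with radius \emph{exactly} $t$. The contradiction for $x_t\ne x_{\bar t}$ (with $\bar t<t$) is obtained by running along the ray from $x_t$ through $x_{\bar t}$: the $1$-Lipschitz bound on $f$ between the two boundary points on this ray yields $t-\bar t\le t-\bar t-\sfd(x_t,x_{\bar t})$, which is absurd. None of this is recoverable from ``$u$ is Lipschitz'' --- you need the sharp constant $|\nabla f|=1$, which in turn rests on the exact identification of $|\nabla u|$ on level sets.
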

\begin{proof}
We divide the proof into different steps.

\noindent{{\color{blue} Step 1.}} We establish an \email{improved} version of \eqref{eq:unbounded euclpolyaszego} for a function $u$ as in the statement. Fix such $u$. By Theorem \ref{thm:sharpSobgrowth} we know that $u \in L^{2^*}(\mm)$.  For every $n \in \N$ set $v_n\coloneqq (u-1/n)^+$ and notice that they are supported in the open set $\Omega_n\coloneqq\{u>1/(2n)\}$, which is bounded. Therefore $v_n \in \LIP_c(\X).$ In particular by the Lipschitz-to-Lipschitz property of the rearrangement in the compact case (see \cite[Prop. 3.4]{NobiliViolo21}) we have $v_n^*\in \LIP_c([0,R_n))$ for suitable $R_n>0$. From \eqref{eq:composition rearr} we also have $v_n^*=(u^*-1/n)^+$, which is non-increasing and $(v_n^*)'\neq 0$ a.e.\ in $\{v_n^*>0\}$. In particular $u^* \in \LIP_{loc}(0,\infty).$

Define the functions $\phi_n,\psi_n,\mu_n : [0,\sup v_n) \to [0,+\infty)$ as
\[
\phi_n(t)\coloneqq \int_{\{v_n>t\}} |\nabla v_n|^2\, \d \mm, \quad \psi_n(t)\coloneqq \int_{\{v_n>t\}} |\nabla v_n|\, \d \mm, \quad \mu_n(t)\coloneqq \mm(\{v_n>t\})
\]
 and  analogously $\phi,\psi,\mu: [0,\sup u) \to [0,+\infty]$ replacing everywhere $v_n$ with $u$. Note that, thanks to the locality of the gradient, $\phi(t)=\phi_n(t-1/n)$ for all $t \in (1/n,\infty)$ and the same holds for $\psi$ and $\mu.$ We claim that
 \begin{itemize}
 \item[a)]$\mu_n$  is absolutely continuous with 
     \begin{equation}\label{eq:mun derivative} 
         -\mu_n'(t) = \frac{\Per(\{v_n^* >t\})}{|(v_n^*)'|\big( (v_n^*)^{-1}(t)\big)}, \qquad \text{ a.e. $t\in (0,\sup v_n)$}.
     \end{equation} 
      If moreover $|\nabla u|\neq 0$ $\mm$-a.e.\ in $\{u>0\}$ then also
      \begin{equation}\label{eq:mun derivative 2} 
         -\mu_n'(t) = \int |\nabla v_n|^{-1} \d \Per(\{v_n>t\})\qquad \text{ a.e. $t\in (0,\sup v_n)$} ;
     \end{equation} 
     \item[b)] $\phi_n,\psi_n$ are  are absolutely continuous with
     \begin{equation}
          \phi'_n(t)=-\int |\nabla v_n|\, \d \Per(\{v_n>t\}),\,\, \psi'_n(t)=-\Per(\{v_n>t\}), \quad \text{for  a.e. $t\in (0,\sup v_n)$}. 
     \end{equation}
 \end{itemize}
Claim \eqref{eq:mun derivative} in a) follows from \cite[Lemma 3.10-3.11]{MondinoSemola20}, since $\mu_n(t) = \mm_N(\{ v_n^*>t\})$ and $\Per(\{v_n^* >t\})$ is concentrated on the point $(v_n^*)^{-1}(t)$.
Claim b) is instead just a direct verification using the coarea formula (see \eqref{eq:coarea}), since $v_n \in \LIP_c(\X)$.
Under the assumption  $|\nabla u|\neq 0$ $\mm$-a.e.\ in $\{u>0\}$, 
by the H\"older inequality (using \eqref{eq:mun derivative 2}) we have 
\begin{equation}\label{eq:holder polya}
    -\phi_n'(t)\ge -\psi_n'(t)^2(-\mu_n'(t))^{-1},
\end{equation}
at a.e.\ $t\in (0,\sup v_n)$ which is a differentiability point for $\mu_n,\psi_n,\phi_n.$ If instead we only know that $(u^*)'\neq0$ a.e.\ in $\{u^*>0\}$, we can still deduce \eqref{eq:holder polya} applying first H\"older inequality and then differentiating (see the argument in \cite[Prop. 3.12]{MondinoSemola20}). Integrating the above inequality, recalling that $\Per(\{v_n^*>t\})= N\omega_N^{\frac1N}\mu_n(t)^{\frac{N-1}{N}},$ we get for every $r,s \in [0,\sup v_n]$ with $s<r$:
\begin{equation}
		\int_{\{s<v_n\le r\}}|{\nabla v_n}|^2\d \mm\ge \int_s^{r}\Big(\frac{\Per (\{v_n>t\})}{N\omega_N^{\frac1N}\mu_n(t)^{\frac{N-1}{N}}} \Big)^2\int  |\nabla  v_n^*| \d \Per(\{v_n^*>t\})  \, \d t. \label{eq:partial improved polya}
	\end{equation}
Hence, the isoperimetric inequality \eqref{eq:isoperimetry AVR} gives directly
\begin{equation}
   	\int_{\{s<v_n\le r\}}|{\nabla v_n}|^2\d \mm\ge {\sf AVR}(\X)^{2/N} \int_{\{s<v_n^*\le r\}} |\nabla v_n^*|^2\,\d\mm_N, \quad \forall 0\le s<r\le \sup v_n,    \label{eq:partial Polya intermediate}
\end{equation}
having also used coarea formula for the function $v_n^*$ since it is $\LIP([0,R_n])$ as recalled before.

 Since $v_n=(u-1/n)^+$ and $v_n^*=(u^*-1/n)^+,$ from the locality of the gradient we can rewrite \eqref{eq:partial Polya intermediate} (after a change of variable) as
\begin{equation}\label{eq:partial improved polya v2}
\int_{\{s+1/n<u\le r+1/n\}}|{\nabla u}|^2\d \mm\ge  {\sf AVR}(\X)^{2/N}  \int_{\{s+1/n<u^*\le r+1/n\}} |\nabla u^*|^2\,\d\mm_N,
\end{equation}
for every $s<r$ with $s,r \in (0,\sup u-1/n].$
Taking the limit as $n \to +\infty$ we obtain 
\begin{equation}
		\int_{\{s<u\le r\}}|{\nabla u}|^2\d \mm\ge  {\sf AVR}(\X)^{2/N}  \int_{\{s<u^*\le r\}} |\nabla u^*|^2\,\d\mm_N,  \quad \forall 0\le s<r\le \sup u. \label{eq:improved polya}
\end{equation}

\noindent{{\color{blue} Step 2.}} We pass to the proof that $\X$ is a cone. We claim that if equality occurs in \eqref{eq:partial Polya intermediate} for some $n\in \N$ and some $r,s \in [0,\sup v_n]$ with  $r<s$, then
\begin{enumerate}[label=\roman*)]
    \item\label{it:pern}$\Per (\{v_n>t\})=N(\omega_N\avr(\X))^{\frac1N}\mu_n(t)^{\frac{N-1}{N}}$, for a.e. $t \in (s,r)$
    \item\label{it:constn} If $|\nabla u|\neq 0$ $\mm$-a.e.\ in $\{ u>0\}$, then $|\nabla v_n|$ is constant $\Per(\{v_n>t\})$-a.e.\ for a.e. $t\in(s,r)$.
\end{enumerate}
Claim \ref{it:pern} follows directly from the way we deduced \eqref{eq:partial Polya intermediate} from \eqref{eq:partial improved polya} using the isoperimetric inequality \eqref{eq:isoperimetry AVR}. Claim \ref{it:constn} instead follows by the equality case in the H\"older inequality \eqref{eq:holder polya}.

We now suppose, as in the hypotheses, that $u$ attains equality in \eqref{eq:unbounded euclpolyaszego}, which means that equality holds in \eqref{eq:improved polya} with $(s,r)=(0,\sup u)$.  We claim that equality must hold in \eqref{eq:improved polya} also for all $s<r$ with $s,r\in (0,\sup u)$. Suppose it fails for some $s<r.$ Then, calling $L(s',r')$ and $R(s',r')$ respectively the left and right hand sides of \eqref{eq:improved polya}, we have
\[
L(0,\sup u)=L(0,s)+L(s,r)+L(r,\sup u)>R(0,s)+R(s,r)+R(r,\sup u)\ge R(0,\sup u),
\]
which contradicts the equality for $(0,\sup u).$ This proves the claim. Thus, equality holds in \eqref{eq:partial improved polya v2} for every $s<r$, with $s,r\in(0,\sup u-1/n]$ which is equivalent to equality in \eqref{eq:partial Polya intermediate} for every $s<r$ with $r,s \in [0,\sup v_n]$. Therefore  \ref{it:pern} holds and, provided $|\nabla u|\neq 0$ at $\mm$-a.e.\ point in $\{u>0\}$, also \ref{it:constn} holds for every $s<r$ with $r,s \in [0,\sup v_n]$ and $n \in \N$. Putting these together and by arbitrariness of $n,$ implies that 
\begin{align}
&\Per(\{u>t\}) = N({\sf AVR}(\X)\omega_N)^{1/N}\mm ( \mu(t) )^{\frac{N-1}N},& &\text{a.e.\ }t \in (0,\sup(u)),\label{eq: PolSzego 1}
\end{align}
and, if $|\nabla u|\neq 0$ $\mm$-a.e.\ in $\{ u>0\}$, we get
\begin{equation}
    |\nabla u| \equiv c_t\qquad \Per(\{u>t\})\text{-a.e. for some constant }c_t\ge 0\label{eq:lipu ct}
\end{equation}
for a.e. $t\in(0,\sup u)$. 
Therefore, there exists $t$ with $\mu(t)>0$ so that equality occurs in \eqref{eq: PolSzego 1}, and recalling the rigidity in Theorem \ref{thm:rigidity ISOAVR}, we get that $\X$ is isomorphic to an $N$-Euclidean metric measure cone.

\noindent{{\color{blue} Step 3.}} Here we prove the functional rigidity of $u$, i.e.\ we prove that $u$ is radial under the additional assumption: $|\nabla u|\neq 0$ $\mm$-a.e.\ on $\{u>0\}$. 

We first claim that \eqref{eq: PolSzego 1} actually holds for every $t \in (0,\sup u)$. Let $t \in (0,\sup u)$  and consider a sequence $t_n\downarrow t$ for which \eqref{eq: PolSzego 1}  holds in every $t_n$.  Then, by lower-semicontinuity of the perimeter (see, e.g., \cite[Proposition 3.6]{Miranda03}) and continuity  of $\mu$, we get
\[ \Per(\{u>t\}) \le \liminf_{n\to\infty}\Per(\{u>t_n\}) \overset{\eqref{eq: PolSzego 1}}{=}  N({\sf AVR}(\X)\omega_N)^{1/N} \mu(t) ^{\frac{N-1}N}.\]
Being the converse inequality always true (from  \eqref{eq:isoperimetry AVR}), the claim follows. Since $\{ u>t\}$ are bounded (recall that $u$ tends to zero at infinity), we can apply the rigidity  Theorem \ref{thm:rigidity ISOAVR} to deduce that for every $t \in (0,\sup u)$ there exists a radius $R_t> 0$ and $x_t \in \X$  a tip for $\X$ (recall that $\X$ is a cone from Step 2) so that $\mm(  \{u>t\} \triangle B_{R_t}(x_t) )=0$, where $ \triangle$ denotes the symmetric difference. However $\{u>t\}$ is open. Thus
\begin{equation}\label{eq:level=ball}
     \{u>t\} =B_{R_t}(x_t).
\end{equation}
 We stress that the notation $x_t$ is chosen because  the cone structure may depend \emph{a priori} on the  isoperimetric superlevel set $\{u>t\}$.  From here, the rest of the proof is devoted to show that $x_t$ is in fact independent of $t$ and $u$ is radial. To do so we will follow the lines of 
  the argument used in \cite[Theorem 5.1]{MondinoSemola20}, for the compact case.

Using \eqref{eq:lipu ct} and  \eqref{eq:mun derivative} (recall that $\mu(t)=\mu_n(t-1/n)$) we get
\begin{align*}
      Nc_t^{-1}(\avr(\X)\omega_N)^{\frac1N}\mu(t)^{\frac{N}{N-1}}&=\int |\nabla u|^{-1} \d \Per(\{u>t\})=-\mu'(t)= \frac{N \omega_N^{\frac1N}\mu(t)^{\frac{N}{N-1}}}{|(u^*)'((u^*)^{-1}(t))|},
\end{align*}
for a.e. $t \in (0,\sup u).$
In particular, 
\begin{align}
& |\nabla u| =\avr(\X)^{\frac1N}|(u^*)'((u^*)^{-1}(t))| &  & \Per(\{u>t\})\text{-a.e.\ and a.e.\ } t \in (0,\sup u).\label{eq: PolSzego 2} 
\end{align}
Let $M\coloneqq\|u\|_{L^\infty(\mm)}\in [0,+\infty).$
From the hypotheses $u^*$ is non-negative, strictly decreasing and locally absolutely continuous (in fact locally Lipschitz)  in $\{u^*>0\}=[0,A)$ for some $A\in (0,+\infty]$ (in fact $A=\mm(\{u>0\})$). Hence it admits a strictly decreasing continuous inverse  $ (u^*)^{-1}:(0,M]\to [0,A)$, locally absolutely continuous in $(0,M)$. Since $(u^*)^{-1}(M)=0,$ we can extend it by zero in $[M,\infty)$ and call $H:(0,\infty)\to [0,A)$ this extension. In particular $H\in {\sf AC}_{loc}(0,\infty)$. Observe that $H$  might blow up at zero.
Note also that, since $u^*$ is locally Lipschitz in $(0,A)$, it preserves $\mathcal L^1$-null sets. Hence pre-images of $\mathcal L^1$-null subsets  of $(0,M)$ via $H=(u^*)^{-1}$ are also $\mathcal L^1$-null. Therefore for a.e.\ $t\in (0,A)$ the function $u^*$ is differentiable at $(u^*)^{-1}(t)$, the function $H$ is differentiable at $t$  and
\begin{equation}\label{eq:inverse derivative}
   (u^*)'((u^*)^{-1}(t))H'(t)=(u^*((u^*)^{-1}(t)))'=1.
\end{equation}
To conclude the proof, we need to show that $f\coloneqq  \avr(\X)^{-\frac1N}H \circ u: \{u>0\}\to [0,\infty)$ satisfies
\begin{equation}\label{eq:f=d}
    f(.)=\sfd(x_0,.),
\end{equation}
for some point $x_0 \in \{u>0\}.$ Observe that $f$ is continuous. We start proving that:
\begin{equation}\label{eq:regularity of f}
    f \in \LIP_{loc}(\{u>0\}) \text{ and } |\nabla f|=1\,\, \mm \text{-a.e.. in $\{u>0\}$}.
\end{equation}
To show this we will use the chain rule in Lemma \ref{lem:chain rule} with $u$, $\Omega\coloneqq \{u>0\}$, $\phi\coloneqq H$ and $I\coloneqq(0,\infty)$.  To check the hypotheses we observe that by continuity $u(\Omega')\subset \subset (0,\infty)$ for all $\Omega'\subset \subset \Omega$. Moreover  by \eqref{eq: PolSzego 2} and \eqref{eq:inverse derivative} we have that for a.e.\  $t \in (0,M)$ it holds
\[
|H'(u)||\nabla u| =|H'(t)||(u^*)'((u^*)^{-1}(t))|\avr(\X)^{\frac1N}=\avr(\X)^{\frac1N}, \quad  \Per(\{u>t\})\text{-a.e..} 
\]
Therefore by coarea (recall \eqref{eq:coarea}) and the fact that $\mm(\{|\nabla u|=0\}\cap \Omega)=0$, we easily deduce that $|H'(u)||\nabla u|=\avr(\X)^{\frac1N}$ $\mm$-a.e. in $\Omega.$ 
In particular $|H'(u)||\nabla u| \in L^2_{loc}(\mm)$ and we can apply Lemma \ref{lem:chain rule} to deduce that $f \in W^{1,2}_{loc}(\{u>0\})$ with $|\nabla f|= 1$, $\mm$-a.e.\ in $\{u>0\}$. Moreover from the local Sobolev-to-Lipschitz property (see \cite[Prop. 1.10]{GV21}) we deduce that $f\in \LIP_{loc}(\{u>0\})$ and
\begin{equation}\label{eq:improved sobolev to lip}
    |f(x)-f(y)|\le \sfd(x,y), \quad \forall x,y \in \{u>0\}, \text{ with } \sfd(x,y)\le \sfd(x,\{u=0\}).
\end{equation}
This proves \eqref{eq:regularity of f}. Next, we claim that
\begin{equation}\label{eq:quasidistance}
    \{f<t\}=B_{t}(x_t), \quad \forall t \in (0,A),
\end{equation}
with $x_t \in \{u>0\}.$ We already know  by \eqref{eq:level=ball} and since $H$ is strictly decreasing, that for every $t \in(0,A)$ the set $\{f<t\}$ is a ball $B_{r_t}(x_t)$ for some $r_t\ge 0$ and $x_t$ tip of $\X$.  In particular $\mm(\{f<t\})=\omega_N \theta (r_t)^N$ and $\Per(\{f<t\})=(\omega_N \theta)^{\frac 1N}N\theta  (r_t)^{N-1},$  where $\theta\coloneqq \avr(\X)$.
 Moreover by coarea formula \eqref{eq:coarea} applied to $-f$ and using \eqref{eq:regularity of f}
\[
\omega_N \theta [(r_t)^N-(r_s)^N]=\mm(\{f<t\})-\mm(\{f<s\})=\int_{\{s\le f<t\}}|\nabla f|\, \d \mm=\int_s^t \Per (\{f<r\}) \, \d r.
\]
Therefore the function $(r_t)^N$ is absolutely continuous with 
$$\frac{\d}{\d t}(r_t)^N=(\omega_N \theta)^{-1}\Per (\{f<t\})=N(r_t)^{N-1},\quad \text{a.e. }t\in(0,A),$$
from which follows that $r_t=a+t$, for all $t \in(0,A)$, for some constant $a\ge0$. We claim that $a=0$. Indeed by continuity and Bishop-Gromov inequality we have
$$a^N\omega_N\avr(\X)\le \mm(\cap_{t>0} B_{a+t}(x_t))=\mm(\cap_{t>0} \{f<t\})=\mm(\{f=0\})=\mm(\{u=M\})=0,$$
where in the last equality we used that $|\nabla u|\neq0$ $\mm$-a.e. in $\{u>0\}$. This proves \eqref{eq:quasidistance}.

It remains to prove that $x_t\equiv x_0$ for all $t \in (0,A)$. This would show \eqref{eq:f=d} and conclude the proof. We argue by contradiction and suppose that $x_t \neq x_{\bar t}$ for some $\bar t<t<A.$ Set $\delta\coloneqq \sfd(x_t,x_{\bar t})>0.$ Recall that $x_t$ is a tip of $\X$, hence there is a ray emanating from it and containing $\bar x_t$, i.e.\ an isometry $\gamma: [0,\infty)\to \X$ with $\gamma_0=x_t$ and $\gamma_{\delta}=x_{\bar t}$. Consider the points $x\coloneqq \gamma_t \in \partial B_{t}(x_t)=\{f=t\}$ and $y\coloneqq \gamma_{\delta+\bar t}\in \partial B_{\bar t}(x_{\bar t})=\{f =\bar t\}$. Since $\gamma_{\delta+\bar t}\in B_{t}(x_t)$ and $\gamma$ is an isometry, $\delta+\bar t<t$. Therefore applying \eqref{eq:improved sobolev to lip}, since $\sfd(y,\{u=0\})\ge \sfd(y,\partial B_t(x_t))=\sfd(x,y)$, we finally find a contradiction:
\[
t-\bar t=f(x)-f(y)\le \sfd(x,y)=t-(\bar t+\delta).
\]
\end{proof}
From Step 1 of the above proof, we deduce the following that has its own interest.
\begin{proposition}[Improved P\'olya-Szeg\H{o} inequality]\label{prop:improved polya}
Let $\Xdm$ be an $\RCD(0,N)$ space with $N\in(1,\infty)$ and ${\sf AVR}(\X)>0$. Then for every $u\in \LIP_{loc}(\X)$, non-negative, $u(x)\to 0$ as $\sfd(x,z)\to +\infty$ for some $z \in\X$, and with $(u^*)'\neq 0$ -a.e. in $\{u^*>0\}$,   it holds
\begin{equation}
		\int_{\{s<u<r\}}|\nabla u|^2\d \mm\ge \int_s^{r}\Big(\frac{\Per (\{u>t\})}{N\omega_N^{\frac1N}\mu(t)^{\frac{N-1}{N}}} \Big)^2\int  |\nabla  u_{N}^*| \d \Per(\{u^*_N>t\})  \, \d t \label{eq:improved polya RCD}, \quad \forall 0\le s<r\le \sup u.
	\end{equation}
\end{proposition}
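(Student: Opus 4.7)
The plan is to extract this as the intermediate step in the proof of Theorem \ref{thm:rigidity PolyaAVR} that precedes the use of the isoperimetric inequality~\eqref{eq:isoperimetry AVR}; it is essentially \eqref{eq:partial improved polya} stated for $u$ instead of for its truncation $v_n \coloneqq (u-1/n)^+$.

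First I would reduce to compactly supported truncations by setting $v_n \coloneqq (u - 1/n)^+$. Since $u$ decays at infinity, the set $\{u > 1/(2n)\}$ is bounded and $v_n \in \LIP_c(\X)$. By \eqref{eq:composition rearr}, $v_n^* = (u^* - 1/n)^+$, which still has $(v_n^*)' \neq 0$ a.e.\ on its positivity set. The Lipschitz-to-Lipschitz property of the rearrangement \cite[Prop.~3.4]{NobiliViolo21} additionally gives $v_n^* \in \LIP_c([0, R_n))$ for a suitable $R_n > 0$.

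Second I would repeat parts a) and b) of Step 1 in the proof of Theorem~\ref{thm:rigidity PolyaAVR} with $v_n$ in place of $u$. The coarea formula \eqref{eq:coarea} yields absolute continuity of the distribution functions $\phi_n,\psi_n,\mu_n$ (defined as in that proof) together with explicit formulas for their derivatives, the one for $-\mu_n'$ being provided by \cite[Lemmas~3.10--3.11]{MondinoSemola20}. A differentiate-after-H\"older argument as in \cite[Prop.~3.12]{MondinoSemola20} (which uses only $(v_n^*)'\neq 0$, and not the stronger hypothesis $|\nabla v_n|\neq 0$) then gives the pointwise bound
\[
-\phi_n'(t) \ge \frac{(-\psi_n'(t))^2}{-\mu_n'(t)}, \qquad \text{a.e. } t \in (0, \sup v_n).
\]
Integrating this on $(s,r) \subset [0, \sup v_n]$ and substituting the explicit derivative formulas yields exactly \eqref{eq:partial improved polya} for $v_n$.

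Finally I would pass to the limit as $n \to \infty$. By locality, $\{v_n > t\} = \{u > t + 1/n\}$ and $|\nabla v_n| = |\nabla u|$ on $\{u > 1/n\}$, with analogous relations for $v_n^*$ and $u^*$. After the change of variable $\tau = t + 1/n$, the inequality for $v_n$ reads exactly as \eqref{eq:improved polya RCD} for $u$ but on the shifted interval $(s + 1/n, r + 1/n)$. Sending $n \to \infty$ and invoking monotone/dominated convergence, in complete analogy with the passage from \eqref{eq:partial improved polya v2} to \eqref{eq:improved polya} in Step 1 of the proof of Theorem~\ref{thm:rigidity PolyaAVR}, concludes the argument. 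The most delicate point I expect is the limit on the right-hand side, since the integrand involves a quotient of a perimeter and a power of the distribution function, but lower semicontinuity of the perimeter together with the continuity properties of $\mu$ (off an at most countable set) are enough to handle it.
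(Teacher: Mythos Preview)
Your proposal is correct and follows exactly the route the paper indicates: the proposition is stated immediately after Theorem~\ref{thm:rigidity PolyaAVR} with the one-line justification ``From Step 1 of the above proof, we deduce the following,'' and your argument is precisely that Step~1 spelled out. One small simplification: after your change of variable $\tau=t+1/n$, the right-hand side already involves $\Per(\{u>\tau\})$ and $\mu(\tau)$ directly (not $v_n$), so no perimeter limit is needed at all---you only need to pass from the shifted interval $(s+1/n,r+1/n)$ to $(s,r)$, which for $s>0$ is immediate by choosing $n$ large, and for $s=0$ is monotone convergence on both sides.
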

\begin{remark}
\rm
Even if we shall not need it, we observe that  Proposition \ref{prop:unbounded euclpolyaszego}, Proposition \ref{prop:improved polya} and Theorem \ref{thm:rigidity PolyaAVR}  hold replacing $p=2$ with any $p \in (1,\infty)$, the proof is the same.

We point out that the improved rearrangement inequality \eqref{eq:improved polya RCD} appeared also in \cite[Eq. (3.46)]{AntonelliPasqualettoPozzettaSemola22} for non-collapsed spaces and for functions defined on open sets (with finite volume) and with zero-Dirichlet boundary conditions.
\fr
\end{remark}

\begin{remark}[On the necessity of $(u^*)'\neq 0$ and $|\nabla u|\neq 0$]
    \rm 
    We point out that, the hypothesis $(u^*)'\neq 0$ in Theorem \ref{thm:rigidity PolyaAVR} is \emph{necessary} to prove that $u$ is radial. This is well-known, see e.g.\ \cite[Example 4.6]{BrothersZiemer88} for an easy counterexample (in $\R^n$) of a Lipschitz function saturating the P\'olya-Szeg\H{o} inequality with $(u^*)'=0$ occurring on a set of positive measure.

    In Theorem \ref{thm:rigidity PolyaAVR} we also assumed $|\nabla u|\neq 0$ at $\mm$-a.e.\ point of $\{u>0\}$. This was needed to carry out key computations by differentiating the distribution functions (see, e.g., \eqref{eq:mun derivative 2} above), as also done in \cite{MondinoSemola20}. It is not clear to us at the moment if this assumption can be removed.
    \fr 
\end{remark}

\section{Regularity of extremal functions}\label{sec:regularity}

We discuss here the general regularity properties of extremal functions for the Sobolev inequalities \eqref{eq:convention} considered in this note.

\begin{theorem}[Regularity of extremal functions]\label{thm:regularity of extremals}
Fix $N \in (2,\infty)$ and set $2^*\coloneqq 2N/(N-2).$  Let $\Xdm$ be an $\RCD(K,N)$ space, for some $K\in\R,N\in(2,\infty)$ supporting a Sobolev inequality \eqref{eq:convention} with constant $A>0,B\ge 0$.     Suppose  that equality occurs in \eqref{eq:convention}  for some $u \in W^{1,2}_{loc}(\X)$  satisfying  $\|u\|_{L^{2^*}(\mm)}=1$. Then  $u \in D(\bd)$ and
      \begin{equation}\label{eq:el}
            -A\Delta u=(|u|^{2^*-2}u-Bu).
    \end{equation}
    Moreover if $u \in L^\infty(\mm)$, then $u \in \LIP_{loc}(\X)$, $|u|>0$ on $\X$ and if $B=0$ then $|\nabla u|\neq 0$ $\mm$-a.e..
\end{theorem}
For the proof, we need two additional results.
\begin{proposition}[Hopf strong maximum principle]\label{prop:max prin}
Let $\Xdm$ be an $\RCD(K,N)$ space for $K\in\R, N<\infty$. Let $\Omega\subset \X$ be open and connected and $u \in D(\bd,\Omega)\cap C(\Omega)$ satisfying $\bd u-cu\mm \ge 0$ for some constant $c\ge 0$ and $u(x_0)=\sup_{\Omega} u\ge 0$, with $x_0 \in \Omega$. Then $u$ is constant.
\end{proposition}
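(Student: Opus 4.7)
The plan is to run the standard strong minimum principle for a non-negative supersolution of a linear elliptic equation with non-negative zeroth-order coefficient, adapted to the $\RCD$ setting via the weak Harnack inequality together with connectedness of $\Omega$. Set $M\coloneqq u(x_0)=\sup_{\Omega}u\geq 0$ and define $w\coloneqq M-u$, so that $w\in C(\Omega)$ is non-negative, vanishes at $x_0$, and $w\in D(\bd,\Omega)$ with $\bd w=-\bd u$. Rewriting the assumption $\bd u\ge cu\,\mm$ in terms of $w$, I will check that for every non-negative $g\in\LIP_c(\Omega)$,
\begin{equation*}
  \int \la\nabla w,\nabla g\ra\,\d\mm\;+\;c\int wg\,\d\mm\;\ge\; cM\int g\,\d\mm\;\ge\; 0,
\end{equation*}
the first identity following from the definition of measure-valued Laplacian and the substitution $u=M-w$, and the final inequality using $cM\ge 0$. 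Hence $w$ is a non-negative weak supersolution of $-\Delta w+cw=0$ on $\Omega$ in the usual variational sense.

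Next, I will show that the set $S\coloneqq\{x\in\Omega\colon w(x)=0\}$, which is nonempty and closed in $\Omega$ by continuity of $w$, is also open. Fix $x_1\in S$ and a ball $B_{2r}(x_1)\subset\subset\Omega$. Since $\Xdm$ is $\RCD(K,N)$, it is uniformly locally doubling by the Bishop-Gromov inequality \eqref{eq:Bishop} and supports a weak local $(1,2)$-Poincar\'e inequality (\cite{Rajala12}). The classical De Giorgi--Nash--Moser theory on doubling metric spaces with Poincar\'e inequality therefore applies and yields a weak Harnack inequality for non-negative weak supersolutions of $-\Delta w+cw=0$: there exist $p_0,C>0$ depending only on $K,N,c,r$ with
\begin{equation*}
  \Big(\fint_{B_r(x_1)} w^{p_0}\,\d\mm\Big)^{1/p_0}\;\le\; C\,\inf_{B_r(x_1)}w.
\end{equation*}
As $w\ge 0$ on $\Omega$ and $w(x_1)=0$, the right-hand side vanishes, hence $w\equiv 0$ $\mm$-a.e.\ on $B_r(x_1)$, and by continuity $w\equiv 0$ on $B_r(x_1)$, i.e.\ $B_r(x_1)\subset S$. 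Since $\Omega$ is connected and $S$ is nonempty, closed, and open, $S=\Omega$; thus $u\equiv M$.

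The only subtle point is to justify invoking the weak Harnack inequality for non-negative supersolutions with the paper's measure-valued formulation of the Laplacian. This is essentially equivalent to the standard variational one, as the computation above shows after integration by parts against $\LIP_c$-test functions, so any of the classical Moser-iteration-based references for elliptic PDE on doubling metric measure spaces satisfying a Poincar\'e inequality applies verbatim (for instance the Bj\"orn--Bj\"orn monograph or the framework used in \cite{Jiang13}). Note that no finer regularity of $u$ beyond continuity is needed, as $w\in D(\bd,\Omega)$ already provides $w\in W^{1,2}_{loc}(\Omega)$, which is the natural input for the weak Harnack machinery.
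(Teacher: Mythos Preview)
Your argument is correct and takes a genuinely different route from the paper. You reduce to the standard strong minimum principle for non-negative supersolutions via the weak Harnack inequality: setting $w=M-u\ge 0$ with $w(x_0)=0$, you verify that $w$ is a weak supersolution of $-\Delta w+cw=0$, then weak Harnack forces $w\equiv 0$ on a ball, and connectedness finishes. The paper instead runs a Hopf-type barrier argument: it first proves a weak maximum principle for $\bd v-cv\mm\ge\delta\mm$ with $\delta>0$ (using the strong maximum principle for \emph{subharmonic} functions from \cite{GR17,Bjorn-Bjorn11}), and then at a boundary point of the maximum set builds the explicit barrier $h(z)=e^{-A\sfd(x,z)^2}-e^{-Ar^2}$, using Laplacian comparison \cite{Gigli12} to check $\bd h-ch\mm\ge\delta\mm$ for $A\gg1$, so that $u+\eps h$ violates the weak principle.

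What each buys: your approach is shorter and purely PDE-theoretic, but it leans on a weak Harnack inequality for $-\Delta+c$ with $c\ge 0$ on doubling Poincar\'e spaces. While Moser iteration with a bounded non-negative zeroth-order term is indeed routine, note that the references you cite do not cover this case verbatim (Bj\"orn--Bj\"orn treats $p$-harmonic functions without lower-order terms, and \cite{Jiang13} treats Poisson-type equations); you would want to point to a source that handles the zeroth-order term explicitly, or add the few lines of Moser iteration yourself. The paper's approach avoids this by reducing to the subharmonic case and invoking the $\RCD$-specific tool of Laplacian comparison for distance functions; it stays entirely within results already stated in the cited $\RCD$ literature, at the cost of the extra barrier construction.
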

\begin{proof}

We first prove the following weaker maximum principle: 
\begin{quote}
    let $U\subset \X$ be open and bounded, and suppose that  $v \in D(\bd,U)\cap C(\bar U)$ satisfies $\bd v-cv\mm \ge \delta \mm$
with $\delta>0$, and $m\coloneqq\max_{\bar U} v\ge 0$, then
\begin{equation}\label{eq:weaker maximum principle}
\max_{\bar U} v\le \sup_{\partial U} v.
\end{equation}
\end{quote}
Let $v$ and $U$ be as above. Set $C\coloneqq\{x \in \bar U \ : \ v(x)=m\}.$ If $C\cap \partial U\neq \emptyset$ we are done, hence we can assume that $C\subset U.$ Since $C$ is closed $\emptyset \neq \partial C\subset C\subset U$. Let $z_0 \in \partial C$. By continuity  there exists $r$  small enough so that $B_r(z_0)\subset U$ and $v\ge -\delta/(2c)$ in $B_r(z_0).$ Then $\bd v\ge cv \mm+\delta\mm \ge \delta/2\mm$ in $B_r(x_0)$ and in particular $v$ is subharmonic. Then from the strong maximum principle for subharmonic functions \cite{GR19} (see also \cite{Bjorn-Bjorn11}) (recall that balls in $\X$ are connected) we deduce that $v\equiv m$ in $B_r(z_0),$ which contradicts the fact that $z_0 \in \partial C\subset U.$

We now go back to the proof. The argument is essentially the same in  \cite{GR19}, only that we will use the above weak maximum principle instead of the weak maximum principle for subharmonic functions.

Define the set $C\coloneqq \{u=u(x_0)\}\subset \Omega$. If $C=\Omega$ we are done. Otherwise there exists $x\in \Omega \setminus C$ such that exists a unique $y \in C$ satisfying $r\coloneqq \sfd(x,y)=\sfd(x,C)<\sfd(x,\Omega^c)$ (see \cite{GR19}). Define the function $h(z)\coloneqq e^{-A\sfd(x,z)^2}-e^{-A r^2},$ with $A\gg1$ to be chosen. Let $r'<r/2$ be such that $B_{r'}(y)\subset \Omega.$ To finish the proof it is sufficient to show that 
\begin{equation}\label{eq:sup nuovo}
u(y)=u(y)+\eps h(y)\le \sup_{\partial B_{r'}(y)} u+\eps h, \quad \forall \eps>0,
\end{equation}
indeed the conclusion then follows arguing exactly as at the end of \cite{GR19}.

By Laplacian comparison \cite{Gigli12} (with computations similar to \cite{GR19}) we can show that, provided $A$ is chosen large enough depending on $r$ and $c$, $\bd\restr{B_{r/2}(y)} h\ge 2ce^{-A\sfd(x,\cdot)^2} \mm\restr{B_{r/2}(y)}.$ Therefore
\[
(\bd h-ch\mm)\restr{B_{r/2}(y)} \ge ce^{-A\sfd(x,\cdot )^2}\mm\restr{B_{r/2}(y)}\ge c e^{-4Ar^2} \mm\restr{B_{r/2}(y)}.
\]
In particular for every $\eps>0$
$$(\bd ( u+\eps h)-c(u+\eps h)\mm)\restr{B_{r'}(y)}\ge \eps c e^{-4Ar^2} \mm \restr{B_{r'}(y)}, $$
from which \eqref{eq:sup nuovo} follows from \eqref{eq:weaker maximum principle} with $v\coloneqq u+\eps h$, $U\coloneqq B_{r'}(y),$ noticing that $\sup_{B_{r'}(y)} v\ge u(y)+\eps h(y)=u(x_0)\ge0.$
\end{proof}
\begin{proposition}\label{prop:laplcian locality}
Let $\Xdm$ be an $\RCD(K,N)$ space for some $K\in\R$, $N<+\infty$. Consider $\Omega \subset \X$ open and $u \in D(\DDelta,\Omega)$ with $\Delta u \in L^2_{loc}(\Omega).$ Then
\begin{equation}\label{eq:laplacian locality}
    \Delta u=0 \qquad \text{$\mm$-a.e. in $\{|\nabla u|=0\}$}.
\end{equation}
\end{proposition}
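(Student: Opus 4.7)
The strategy is to combine a chain rule for the measure-valued Laplacian with a truncation/maximum-principle argument that extracts the conclusion on $\{|\nabla u|=0\}$.

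First, by the locality of both the measure-valued Laplacian (immediate from its duality definition via $\LIP_c$ test functions) and the minimal weak upper gradient, together with multiplication by a Lipschitz cutoff $\eta$ compactly supported in $\Omega$, I would reduce to the global case: $\Omega = \X$ with $u \in L^\infty(\mm) \cap D(\bd)$, $\Delta u \in L^2(\mm)$ and $u$ of bounded support.

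Next, I would establish the chain rule: for every $\phi \in C^2(\R)$ with $\phi(0)=0$ and $\phi', \phi''$ bounded, one has $\phi(u) \in D(\bd)$ with
\[
\bd \phi(u) \;=\; \bigl( \phi'(u)\Delta u + \phi''(u)|\nabla u|^2 \bigr) \mm.
\]
The derivation tests against $\eta \in \LIP_c(\X)$ and uses that $\phi'(u)\eta \in \LIP_c(\X)$ (since $\phi'$ is Lipschitz and $u$ is bounded), so by Leibniz and the definition of $\bd u$,
\begin{align*}
-\int \langle \nabla \phi(u), \nabla \eta \rangle \d\mm
&= -\int \phi'(u) \langle \nabla u, \nabla \eta \rangle \d\mm \\
&= -\int \langle \nabla u, \nabla(\phi'(u)\eta)\rangle \d\mm + \int \phi''(u) |\nabla u|^2 \eta \d\mm \\
&= \int \eta \bigl( \phi'(u) \Delta u + \phi''(u)|\nabla u|^2 \bigr) \d\mm.
\end{align*}
In particular, applying this with $\phi(t)=t^2/2$ gives the key identity $|\nabla u|^2 = \Delta(u^2/2) - u \Delta u$ $\mm$-a.e., and in particular $|\nabla u|^2 \in L^2_{loc}(\mm)$ lies in $D(\bd)$.

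To close the argument, I would invoke the Bochner inequality valid in $\RCD(K,N)$, which gives
\[
\tfrac{1}{2} \bd |\nabla u|^2 \;\ge\; \Bigl(K |\nabla u|^2 + \tfrac{(\Delta u)^2}{N}\Bigr)\mm + \langle \nabla u, \nabla \Delta u\rangle \mm.
\]
On the set $\{|\nabla u|=0\}$, the nonnegative function $|\nabla u|^2$ attains its global minimum, so a Stampacchia-type property for the measure-valued Laplacian (analogous to $|\nabla f|=0$ $\mm$-a.e. on $\{f=c\}$) yields that the $\mm$-absolutely continuous part of $\bd |\nabla u|^2$ is nonpositive there. Simultaneously, on $\{|\nabla u|=0\}$ the right-hand side of Bochner reduces to $(\Delta u)^2/N$, since $K|\nabla u|^2$ and $\langle \nabla u, \nabla \Delta u\rangle$ both vanish $\mm$-a.e. (using that $\nabla u = 0$ $\mm$-a.e. on this set and the locality of weak gradients). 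Combining, $(\Delta u)^2/N \le 0$ on $\{|\nabla u|=0\}$, forcing the conclusion $\Delta u = 0$ $\mm$-a.e. there.

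The main obstacle is rigorously justifying the step ``$\bd |\nabla u|^2 \le 0$ $\mm$-a.e. on the zero set of $|\nabla u|^2$'', which is the Stampacchia property for the Laplacian at the minimum level of a Sobolev function. This relies on either a careful local maximum principle argument in the spirit of Proposition \ref{prop:max prin}, or on elements of the second-order calculus on $\RCD$ spaces (à la Gigli) to interpret $\bd |\nabla u|^2$ with sufficient regularity; streamlining this passage so as to remain within the elementary framework of the paper is the key technical hurdle.
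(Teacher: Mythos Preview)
Your approach has two genuine gaps, and the second one is essentially fatal.

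First, the Bochner inequality in the form you wrote it requires $\Delta u\in W^{1,2}_{loc}$ in order for the term $\langle\nabla u,\nabla\Delta u\rangle$ to make sense, while the hypothesis only gives $\Delta u\in L^2_{loc}$. One can state Bochner weakly against nonnegative test functions, but then the pointwise restriction to $\{|\nabla u|=0\}$ that you want does not follow directly.

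Second, the sign of the ``Stampacchia step'' is wrong, and fixing it makes the argument circular. At points where a function attains its minimum the Laplacian is classically \emph{nonnegative}, not nonpositive; for the inequality $(\Delta u)^2/N\le \tfrac12(\text{a.c.\ part of }\bd|\nabla u|^2)$ to force $\Delta u=0$ on $\{|\nabla u|=0\}$ you would actually need the a.c.\ part of $\bd|\nabla u|^2$ to \emph{vanish} there. But since $\{|\nabla u|^2=0\}\subset\{|\nabla(|\nabla u|^2)|=0\}$ (locality of the gradient on level sets of a nonnegative function), this is exactly the proposition applied to $g=|\nabla u|^2$ in place of $u$. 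You acknowledge this step as the main obstacle, but it is not merely a technicality: it is the content of the result.

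For contrast, the paper's argument avoids both Bochner and any maximum principle. After the same reduction to the global case, it uses only that $|\nabla u|\in W^{1,2}(\X)$ (an $\RCD$ regularity fact). Then for $\phi\in\LIP_c(\X)$ one tests $\Delta u$ against $\phi\cdot\frac{|\nabla u|}{|\nabla u|+\eps}$, integrates by parts, and sends $\eps\to 0$ by dominated convergence (the extra term involves $\nabla|\nabla u|\cdot\frac{\eps}{(|\nabla u|+\eps)^2}$, which is uniformly dominated). This yields $\int_{\{|\nabla u|\neq 0\}}\phi\,\Delta u\,\d\mm=\int\phi\,\Delta u\,\d\mm$ for all $\phi$, hence the conclusion. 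The function $\frac{|\nabla u|}{|\nabla u|+\eps}$ is simply a smooth approximation of $\nchi_{\{|\nabla u|>0\}}$ lying in $W^{1,2}$, which is what makes the argument go through.
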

\begin{proof}
We adapt an argument present in \cite{Lou} in the Euclidean setting. 

It is enough to consider $\Omega=\X$ and $\Delta u \in L^2(\mm)$ with $u \in W^{1,2}(\X)$, the general case follows multiplying by Lipschitz cut-off functions with bounded Laplacian (see \cite{Mondino-Naber14}). We have $|\nabla u|\in W^{1,2}(\X)$ (see e.g.\ \cite[Lemma 3.5]{DGP21}) and in particular for every $\eps>0$, $\frac{|\nabla u|}{|\nabla u|+\eps}\in W^{1,2}(\X)$ with
\[
\nabla \left(\frac{|\nabla u|}{|\nabla u|+\eps}\right)= \nabla |\nabla u| \frac{\eps}{(|\nabla u|+\eps)^2}
\]
(see \cite{Gigli14} for the notion of gradient of a Sobolev function).
Fix $\phi \in \LIP(\X)$ with $\supp (\phi) \subset \Omega$. Then integrating by parts
\begin{align*}
    \int \phi \Delta u \frac{|\nabla u|}{|\nabla u|+\eps}\,  \d \mm &= - \int \la \nabla \phi, \nabla u\ra \frac{|\nabla u|}{|\nabla u|+\eps} \, \d \mm  + \int \varphi\la \nabla |\nabla u|, \nabla u\ra \frac{\eps}{(|\nabla u|+\eps)^2}\, \d \mm.
\end{align*}
Since $\left| \frac{\eps|\nabla u|}{(|\nabla u|+\eps)^2} \right|\le 1,$  sending $\eps \to 0^+$ and applying dominated convergence we obtain 
\[
   \int_{\{ |\nabla u| \neq 0\}} \phi \Delta u  \, \d \mm = - \int \la \nabla \phi, \nabla u\ra \, \d \mm  =  \int \Delta u \phi\,  \d \mm.
\]
From the arbitrariness of $\phi$ the conclusion follows.
\end{proof}
\begin{remark}
\rm
Even if not needed here, we observe that  Proposition \ref{prop:laplcian locality} actually holds in the more general setting of $\RCD(K,\infty)$ spaces (with the same proof).
\fr
\end{remark}
We can now prove the regularity result for Sobolev extremals.
\begin{proof}[Proof of Theorem \ref{thm:regularity of extremals}]
The fact that $u \in D(\bd)$ and that \eqref{eq:el} holds follows from a straight-forward computation exploiting the fact that $u$ is a minimizer of
\[
\inf \frac{\|\nabla v\|_{L^2(\mm)}^2+B/A\|v\|_{L^2(\mm)}^2}{\|v\|_{L^{2^*}(\mm)}^2}=\frac{1}{A},
\]
where the infimum is among all $v \in W^{1,2}_{loc}(\X)$ such that $\mm(\{|v|>t\})<+\infty$ for every $t>0$ and taking variations of the form $u+\eps v$, $v \in \LIP_c(\X)$ as $\eps \to 0.$ See e.g. \cite[Prop. 8.3]{NobiliViolo21} for the details in the compact case.

We pass to the second part, assuming that $u$ is in $L^\infty(\mm).$ 
From \eqref{eq:el} we have that $\Delta u\in L^\infty(\mm),$ therefore Theorem \ref{thm:poisson lip} shows that $u \in \LIP_{loc}(\X).$

From now on we will identify $u$ with its continuous representative. We need to show that $|u|>0$ in $\X$. Suppose this is not the case, i.e.\ $|u|(x_0)=0$ for some $x_0 \in \X.$  Note that $|u|$ also satisfies the hypotheses of the theorem, hence $-\Delta |u|=  |u| A^{-1}(|u|^{2^*-2}-B)$. Consider the function $v\coloneqq -|u|\le 0$. Then, since $u \in L^\infty(\mm)$, 
\[
\Delta v-Cv=|u| (A^{-1}|u|^{2^*-2}-A^{-1}B+C)\ge 0,
\]
provided we choose the constant $C>0$ big enough. In particular, $v$ satisfies the assumption of the maximum principle of Proposition \ref{prop:max prin} with $v(x_0)=0=\max v$. Hence $v\equiv 0$ in $\X$, which is a contradiction because $u$ is assumed non-zero. Finally, if $B=0$, since $u$ never vanishes, we have that also $\Delta u$ never vanishes, hence $|\nabla u|\neq 0$ $\mm$-a.e.\ thanks to \eqref{eq:laplacian locality}.
\end{proof}

\section{Rigidity of extremal functions in the Sobolev inequality}\label{sec:rigidity extremal} 

\subsection{Compact case}
We study here the equality case for the Sobolev inequality as in \eqref{eq:critical sobolev}. 

As a technical tool  we will need the following result that is a standard application of the Moser iteration scheme (see e.g.\ \cite[Theorem 4.4]{HLbook}). This is known to be still valid in our setting, relying only on the Sobolev inequality (see also the discussion after \cite[Theorem 5.7]{gigli22}).
\begin{lemma}\label{lem:extra int}
    Let $\Xdm$ be a compact $\RCD(K,N)$ space, $N<+\infty$, and $u\in D(\bd)$ satisfying for some $g \in L^{N/2}(\mm)$
    $${\bf \Delta} u=g u\mm.$$
    Then $u \in L^q(\mm)$ for every $q<+\infty$.
\end{lemma}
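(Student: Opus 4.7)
The plan is to run a standard Moser iteration, bootstrapping integrability from the base case $u\in L^{2^*}(\mm)$. Since $\X$ is compact, $u\in D(\bd)\subseteq W^{1,2}_{\rm loc}(\X) = W^{1,2}(\X)$, and on a compact $\RCD(K,N)$ space a Sobolev inequality of the form \eqref{eq:convention} with constants $A>0,B\ge 0$ is available (via a finite cover by balls on which the local Poincar\'e inequality and doubling give a local Sobolev inequality, glued by a partition of unity; alternatively one reduces to \eqref{eq:critical sobolev} after rescaling). This already gives $u\in L^{2^*}(\mm)$.

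The iterative step is: if $u\in L^{2\beta}(\mm)$ for some $\beta\ge 1$, then $u\in L^{2^*\beta}(\mm)$. I would test the equation $\bd u=gu\mm$ against the Lipschitz function $\phi_L \coloneqq u\cdot (|u|\wedge L)^{2\beta-2}$, whose gradient, by the chain rule of Lemma \ref{lem:chain rule}, is $(2\beta-1)(|u|\wedge L)^{2\beta-2}\nabla u$ on $\{|u|\le L\}$ and $L^{2\beta-2}\nabla u$ on $\{|u|>L\}$. Integration by parts, combined with $|\nabla(|u|\wedge L)^\beta|^2=\beta^2(|u|\wedge L)^{2\beta-2}\mathbf{1}_{\{|u|\le L\}}|\nabla u|^2$, yields
\[
\tfrac{2\beta-1}{\beta^2}\int |\nabla(|u|\wedge L)^\beta|^2\,\d\mm \;\le\; \int |g|\,|u|^{2\beta}\,\d\mm.
\]
To control the right hand side, I would split $g=g_1+g_2$ with $g_1\in L^\infty(\mm)$ and $\|g_2\|_{L^{N/2}(\mm)}<\eps$ (possible by density of bounded functions in $L^{N/2}(\mm)$). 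For the $g_1$-term one uses $\|u\|_{L^{2\beta}}^{2\beta}<\infty$ directly; for the $g_2$-term, H\"older with exponents $N/2$ and $N/(N-2)$ gives
\[
\int |g_2|\,(|u|\wedge L)^{2\beta}\,\d\mm \;\le\; \eps\,\|(|u|\wedge L)^\beta\|_{L^{2^*}}^2,
\]
and applying the Sobolev inequality \eqref{eq:convention} to $(|u|\wedge L)^\beta$ bounds the right-hand side by $\eps(A\|\nabla(|u|\wedge L)^\beta\|_{L^2}^2+B\|(|u|\wedge L)^\beta\|_{L^2}^2)$.

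Fixing $\eps$ small enough that $\eps A<\tfrac{2\beta-1}{2\beta^2}$ lets me absorb the gradient term into the left hand side, leaving
\[
\|\nabla(|u|\wedge L)^\beta\|_{L^2}^2 \;\le\; C_\beta\,\|u\|_{L^{2\beta}}^{2\beta},
\]
and a final use of \eqref{eq:convention} gives the uniform-in-$L$ bound $\|(|u|\wedge L)^\beta\|_{L^{2^*}}^2\le C_\beta'\|u\|_{L^{2\beta}}^{2\beta}$. Monotone convergence as $L\uparrow\infty$ then yields $\|u\|_{L^{2^*\beta}}\le (C_\beta')^{1/(2\beta)}\|u\|_{L^{2\beta}}$. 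Iterating with $\beta_0=1$ and $\beta_{k+1}=(2^*/2)\beta_k$ sends $2\beta_k\to\infty$, and since $u\in L^{2\beta_k}$ at every step we conclude $u\in L^q(\mm)$ for every $q<\infty$.

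The main delicate point is not the algebraic iteration but making the test-function argument rigorous in the measure-valued Laplacian framework: the truncation $(|u|\wedge L)^{2\beta-2}$ is needed both so that $\phi_L\in\LIP(\X)$ (an admissible test function in the definition of $\bd u$) and so that dominated convergence applies when passing $L\to\infty$ using only the a priori integrability $u\in L^{2\beta}$. Everything else is standard once the chain rule of Lemma \ref{lem:chain rule} and the splitting $g=g_1+g_2$ are in place.
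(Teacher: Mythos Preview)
Your proposal follows exactly the approach the paper has in mind: the paper does not actually prove this lemma but refers to it as ``a standard application of the Moser iteration scheme'' citing \cite[Theorem 4.4]{HLbook} and the discussion after \cite[Theorem 5.7]{gigli22}. Your write-up is a correct implementation of that scheme, with the splitting $g=g_1+g_2$ to absorb the critical term being the key device.

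One technical inaccuracy worth flagging: you assert that $\phi_L=u\,(|u|\wedge L)^{2\beta-2}\in\LIP(\X)$, but at this stage $u$ is only in $W^{1,2}(\X)$, so $\phi_L$ is neither Lipschitz nor bounded (on $\{|u|>L\}$ it equals $L^{2\beta-2}u$). It is, however, in $W^{1,2}(\X)$, and since $\bd u=gu\,\mm$ is absolutely continuous with density in $L^{(2^*)'}(\mm)$, the integration-by-parts formula extends to such test functions by the density of $\LIP(\X)$ in $W^{1,2}(\X)$ on a compact space; alternatively one can use the fully truncated $\mathrm{sign}(u)(|u|\wedge L)^{2\beta-1}\in W^{1,2}\cap L^\infty$ as test function, which yields the same gradient estimate. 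Relatedly, in your $g_2$-bound you wrote $\int|g_2|(|u|\wedge L)^{2\beta}$ where the actual integrand is $|g_2|\,u^2(|u|\wedge L)^{2\beta-2}$; on $\{|u|>L\}$ the latter is larger, so for the absorption you should keep the genuine term and bound it via H\"older against $\|(|u|\wedge L)^{\beta-1}|u|\|_{L^{2^*}}^2$, which the Sobolev inequality applied to $(|u|\wedge L)^{\beta-1}u\in W^{1,2}$ controls in the same way. These are routine fixes and do not affect the structure of the argument.
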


 We can now state and prove the main result of this section. Note that the fact that $\X$ is spherical suspension already follows from \cite[Theorem 1.9]{NobiliViolo21}. Here, we are mainly interested in the explicit expression of extremal functions.
\begin{theorem}\label{thm:Aubin extremals}
Let $\Xdm$ be an $\RCD(N-1,N)$ space, $\mm(\X)=1,$ $N\in(2,\infty)$ and set $2^*=2N/(N-2)$. Let $u\in W^{1,2}(\X)$ be non-constant with $\|u\|_{L^{2^*}}=1$ satisfying 
$$\|u\|_{L^{2^*}(\mm)}^2=\frac{2^*-2}{N} \|\nabla u\|_{L^{2}(\mm)}^2+\|u\|_{L^{2}(\mm)}^2.$$ 
Then, $\X$ is isomorphic to a spherical suspension and, for some $a\in \R,b\in(0,1)$ and $z_0 \in \X$:
\[
u= a (1 - b \cos \sfd(\cdot,z_0) )^{\frac{2-N}{2}}.
\]
\end{theorem}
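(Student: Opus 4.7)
The plan is to combine the structural rigidity already available in the literature with a symmetrization argument to pin down the profile, and then a one-dimensional ODE analysis to identify the extremizer explicitly.

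I would first apply the regularity theorem (Theorem~\ref{thm:regularity of extremals}) with $A=(2^*-2)/N$ and $B=1$. Replacing $u$ by $|u|$, which still attains equality and is still non-constant, I obtain $u\in \LIP_{loc}(\X)\cap D(\bd)$ satisfying
\begin{equation*}
-\tfrac{2^*-2}{N}\Delta u = u^{2^*-1}-u.
\end{equation*}
Since $u$ is bounded (it lies in $L^{2^*}$ and is continuous on the compact space $\X$), the maximum principle Proposition~\ref{prop:max prin} applied to $v=-u$ (with $c$ sufficiently large to absorb the $-u^{2^*-1}$ term) gives $u>0$ on $\X$. Note also that by Proposition~\ref{prop:laplcian locality} combined with the Euler-Lagrange equation, $\{|\nabla u|=0\}\subset\{u=1\}$ up to $\mm$-null sets, so $u$ has non-vanishing gradient on the (non-empty) open sets $\{u>1\}$ and $\{u<1\}$.

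For the structural part, I would invoke Theorem~1.9 of \cite{NobiliViolo21}: a non-constant extremizer of the sharp Sobolev inequality on an $\RCD(N-1,N)$ space forces $\X$ to be isomorphic to a spherical suspension $[0,\pi]\times_{\sin}\Z$ over some $\RCD(N-2,N-1)$ space $\Z$. I would then rearrange $u$ radially along the $[0,\pi]$-fiber, producing a radial candidate $u^\star(x)=\phi(\sfd(x,z_0))$ where $z_0$ is one of the two tips. By the spherical analog of Proposition~\ref{prop:unbounded euclpolyaszego} (proven in \cite{MondinoSemola20} on spherical suspensions using the sharp L\'evy-Gromov isoperimetric inequality and its rigidity), this rearrangement preserves the $L^p$-norms and decreases the Dirichlet energy; equality in the Sobolev inequality thus forces equality in P\'olya-Szeg\H{o}. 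Arguing as in Theorem~\ref{thm:rigidity PolyaAVR} (with $\S^N$-model in place of the Euclidean one and using the non-vanishing of $|\nabla u|$ off $\{u=1\}$ to invoke the Hölder-equality step in an improved P\'olya-Szeg\H{o}), I would conclude that $u$ must itself be radial: $u=\phi\circ\sfd(\cdot,z_0)$.

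Once radial symmetry is established, the profile $\phi\colon[0,\pi]\to(0,\infty)$ satisfies the 1D weighted ODE
\begin{equation*}
-\tfrac{2^*-2}{N}\bigl(\phi''(t)+(N-1)\cot(t)\,\phi'(t)\bigr)=\phi^{2^*-1}-\phi,
\end{equation*}
with the boundary conditions $\phi'(0^+)=\phi'(\pi^-)=0$ forced by the regularity of $u$ at the tips. A direct substitution verifies that for every $a\in\R$ and $b\in[0,1)$, the function $\phi(t)=a(1-b\cos t)^{(2-N)/2}$ solves this equation; uniqueness of the ODE (together with standard shooting/phase-plane analysis applied to the first-order system in $(\phi,\phi')$) shows these are \emph{all} bounded positive solutions on $[0,\pi]$. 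Non-constancy of $u$ rules out $b=0$, giving $b\in(0,1)$ and the claimed expression. The main obstacle is the radial-symmetry step: establishing the spherical P\'olya-Szeg\H{o} rigidity in the $\RCD$-setting and the fact that the extremizer must depend only on the tip-distance is the technical heart of the proof and mirrors, in the compact setting, the analysis carried out in Section~\ref{sec:Polyarigidity} for the Euclidean case.
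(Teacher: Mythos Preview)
Your strategy differs from the paper's and, while plausible in outline, has two obstacles you underplay.

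First, a small circularity: you claim $u\in L^\infty$ because it is ``continuous on the compact space $\X$'', but continuity is precisely what Theorem~\ref{thm:regularity of extremals} would give you \emph{after} knowing $u\in L^\infty$. The paper closes this loop with a Moser-iteration step (Lemma~\ref{lem:extra int}): from the Euler--Lagrange equation $\Delta u=gu$ with $g=\tfrac{N}{2^*-2}(1-|u|^{2^*-2})\in L^{N/2}$, hence $u\in L^q$ for all $q<\infty$, hence $\Delta u\in L^q$ for all $q$, and then $u\in\LIP(\X)$ by \cite{Kell13}.

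The more serious gaps are the symmetrization rigidity and the ODE uniqueness. The P\'olya--Szeg\H{o} rigidity in \cite{MondinoSemola20} is formulated for functions in $W^{1,2}_0$ of a subdomain, not for a strictly positive function on the whole compact space, and its radial-symmetry conclusion requires $|\nabla u|\neq0$ $\mm$-a.e.; here the set $\{u=1\}$ (where $\Delta u=0$, hence possibly $|\nabla u|=0$) could a priori have positive measure, obstructing the H\"older-equality step you invoke. The ODE claim is also not ``standard shooting/phase-plane'': classifying all bounded positive solutions on $[0,\pi]$ of this singular nonlinear equation is a genuine result (essentially the classification of conformal factors on the sphere transported to the weighted model), not a routine verification.

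The paper sidesteps both issues. After regularity it sets $v\coloneqq u^{-2/(N-2)}$; the chain rule gives $v\Delta v=-\tfrac N2(v^2-1)+\tfrac N2|\nabla v|^2$. Multiplying by $\Delta v^{1-N}$, integrating by parts, and feeding in the dimensional Bochner inequality yields $\int(\Delta v)^2\,\d\mm=N\int|\nabla v|^2\,\d\mm$, i.e.\ $\tilde v\coloneqq v-\int v$ attains equality in the sharp Poincar\'e inequality. Ketterer's Obata rigidity \cite{ket15} then gives \emph{simultaneously} that $\X$ is a spherical suspension and that $\tilde v=c\cos\sfd(\cdot,z_0)$; inverting $v=u^{2/(2-N)}$ produces the stated bubble. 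No symmetrization, no ODE classification, and the spherical-suspension structure comes out in the same stroke rather than being imported from \cite{NobiliViolo21}.
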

\begin{proof}
The argument is inspired by the computations in \cite[Section 2.1]{DGZ20}.

First, we need to deduce some regularity on the extremal function $u$.
From Theorem \ref{thm:regularity of extremals} we know that $u \in D(\bd)$ and that
\begin{equation}\label{eq:pde u}
\frac{2^*-2}{N} \Delta u =u-|u|^{2^*-2}u.
\end{equation}
 Since $u^{2^*-2} \in L^{N/2}(\mm)$, by Lemma \ref{lem:extra int} below we deduce that $u \in L^q(\mm)$ for all $q<+\infty.$ In particular  $\Delta u \in L^q(\mm)$ for all $q<+\infty.$ Therefore by \cite[Corollary 6]{Kell13} we have $u \in \LIP(\X)$ and so $u \in L^\infty(\mm)$ (alternatively we could have showed $u\in L^\infty(\mm)$ applying \cite[Lemma 4.1]{Profeta15} and then deduced the Lipschitzianity from Theorem \ref{thm:regularity of extremals}). Then we can apply the second part of Theorem \ref{thm:regularity of extremals} to deduce that either $u>0$ or $u<0$ in $\X.$ Note also that $\Delta u \in W^{1,2}(\X).$
 
Without loss of generality, we can assume that $u>0.$ Set $v\coloneqq u^{\frac{-2}{N-2}}$. By the chain rule for the Laplacian (see e.g.\ \cite[Prop. 5.2.3]{GP20}) $v \in D(\bd)$ with 
$$\Delta v=u^{\frac{-2}{N-2}} \left({\frac{-2}{N-2}}u^{-1}\Delta u +\frac{ 2N}{(N-2)^2}u^{-2}|\nabla u|^2\right)\in W^{1,2}(\X)\cap L^\infty(\mm),$$
indeed $|\nabla u|^2 \in W^{1,2}(\X)$ by \cite[Prop. 3.1.3]{Gigli14}. Noting that $|\nabla v|^2=\frac{ 4}{(N-2)^2}|\nabla u|^2 u^{-2} u^{\frac{-4}{N-2}}$, an easy computation using \eqref{eq:pde u} shows
\begin{equation}\label{eq:pde v}
v\Delta v=-\frac{N}{2}(v^2-1)+\frac{N}{2}|\nabla v|^2.
\end{equation}
Since $v$ is bounded above and away from zero, by the chain rule for the Laplacian we also have that $v^{1-N}\in D(\Delta)$ with $\Delta v^{1-N} \in L^\infty(\mm)$. We can then multiply \eqref{eq:pde v} by $\Delta v^{1-N}$ and integrate 
\[
-\frac N2 \int \Delta v^{1-N}v^2\d \mm=\int \Delta v^{1-N} \left(v \Delta v-\frac N2 |\nabla v|^2\right) \d \mm.
\]
We now proceed to integrate by parts. To do this note that $v \Delta v \in W^{1,2}(\X)$ and $|\nabla v|^2 \in D(\bd)$ (see \cite[Prop. 3.1.3]{Gigli14}). Moreover by the Leibniz rule for the divergence  ${\rm div}(\nabla v \Delta v)=\la\nabla v,\nabla \Delta v\ra+ (\Delta v)^2 \in L^1(\mm)$ by the Leibniz rule (see \cite{Gigli14,GP21} for the notion of divergence and e.g.\ \cite[Prop. 3.2]{GV21} for a version of the Leibniz rule that applies here). Hence
\begin{align*}
    N(1-N) \int|\nabla v|^2 v^{1-N}\d \mm&=-\frac N2\int \Delta v^{1-N} |\nabla v|^2 - \int \la \nabla v^{1-N},\nabla v \Delta v+v \nabla \Delta v\ra \d \mm\\
    &=-\frac N2\int v^{1-N} \bd|\nabla v|^2 + \int v^{1-N} (\Delta v)^2+N\la \nabla v,  \nabla \Delta v\ra v^{1-N} \d \mm.
\end{align*}
Combining the above with the dimensional Bochner inequality (\cite{EKS15,Han18}) and with $v>0$, we get 
\[
\frac12\bd |\nabla v|^2-\la \nabla \Delta v,\nabla v\ra\mm=\frac{(\Delta v)^2}{N}\mm+(N-1)|\nabla v|^2\mm.
\]
Integrating and using that $\int\,\d\bd |\nabla v|^2=0$ gives
\[
\int (\Delta v)^2 \d \mm=-\int\la\nabla \Delta v,\nabla v\ra = \int \frac{(\Delta v)^2}{N}\d \mm+(N-1) \int |\nabla v|^2\d \mm,
\]
from which $\int  (\Delta v)^2 \d \mm=N \int |\nabla v|^2\d \mm .$
In particular $\int \tilde v ^2\d \mm =N\int |\nabla \tilde v|^2$, where $\tilde v\coloneqq (v-\int v\d \mm)$. Then by \cite{ket15} we deduce that $\X$ is a spherical suspension and
$$\tilde v(x) =c \cos \sfd(x,z_0)=-c \cos \sfd(x,\bar z_0),\qquad \forall x \in \X,  $$  for some constant $c>0$ and $z_0,\bar z_0\in \X$ tips of the spherical suspension with $\sfd(z,\bar z_0)=\pi$.
Recalling that $v=u^{\frac{2}{2-N}}$ concludes the proof.
\end{proof}

\subsection{Non-compact case}
Here we investigate the equality case in the Euclidean-type Sobolev inequality \eqref{eq:sharp sobolev growth}.
\begin{theorem}\label{thm:rigidity sharp Sob}
Let $(\X,\sfd,\mm)$ be an $\RCD(0,N)$ space with $N \in (2,\infty)$, ${\sf AVR}(\X)>0$ and set $2^*=2N/(N-2)$. Suppose that for some non-zero $u \in W^{1,2}_{loc}(\X)$ with $\mm(\{|u|>t\})<\infty$ for all $t>0$, it holds
\begin{equation}\label{eq:equality in sobolev}
    \| u\|_{L^{2^*}(\mm)} = \eucl(N,2){\sf AVR(\X)}^{-\frac{1}{N}}\| \nabla u\|_{L^2(\mm)}
\end{equation}
(both being finite). Then, $\X$ is isomorphic to a $N$-Euclidean metric measure cone and 
\begin{equation}
     u = a(1+ b \sfd^2(\cdot,z_0))^{\frac{2-N}{2}},\label{eq:rigidity Sob radial}
\end{equation}
for some $a \in \R,b>0$ and  $z_0$ one of the tips of $\X$.
\end{theorem}
\begin{proof}
We will apply Theorem \ref{thm:rigidity PolyaAVR}. First we need to prove the required regularity of $u.$

Notice that we can equivalently suppose that $\|u\|_{L^{2^*}(\mm)}=1$, by scaling invariance.  Moreover also $|u|$ satisfies the equality in \eqref{eq:equality in sobolev}.
By assumptions, it is possible to perform a Euclidean rearrangement $|u|^*$ of $|u|$. By the P\'olya-Szeg\H{o} inequality and the one-dimensional Bliss inequality we get
\[
\| u\|_{L^{2^*}(\mm)} = \eucl(N,2){\sf AVR(\X)}^{-\frac{1}{N}}\| \nabla u \|_{L^2(\mm)}\overset{\eqref{eq:unbounded euclpolyaszego}}{\ge} \eucl(N,2)\|\nabla |u|^*\|_{L^2(\mm_N)} \overset{\eqref{eq:bliss}}{\ge} \| u^*\|_{L^{2^*}(\mm_N)}.
\]
Note that we can apply \eqref{eq:bliss} since by Proposition \ref{prop:unbounded euclpolyaszego}, $u^*\in W^{1,2}_{loc}(I_N)$ and thus $u$ is locally absolutely continuous in $(0,\infty)$ (see e.g.\  \cite[Section 2.2]{NobiliViolo21}).
By \eqref{eq:equaleuclLpnorm} we see that the inequalities in the above are all equalities, and therefore equality holds in the Bliss inequality. Therefore $|u|^*(t) = a(1+bt^2)^{\frac{2-N}{2}}$ for some $a\in\R,b>0$. In particular, since $\|u\|_{L^\infty} = \|u^*\|_{L^\infty} <\infty$ by equimeasurability, we have $u \in W^{1,2}_{loc}(\X)\cap L^\infty(\mm)$ and we can invoke Theorem \ref{thm:regularity of extremals} (with $B=0$) to deduce $u \in \Lip_{loc}(\X)\cap D(\bd)$,   $\mm(\{|\nabla u|=0\})=0$,  $u>0$ or $u<0$,  and (assuming $u>0$):
\[ 
\eucl^2(N,2){\sf AVR(\X)}^{-\frac{2}{N}} \Delta u = -u^{2^*-1}.
\]
Recalling Theorem \ref{thm:poisson lip}, since $u \in L^\infty(\mm)$, we  get that $|\nabla u| \in L^\infty(\mm)$. By the Sobolev-to-Lipschitz property (see \cite{Gigli13,AmbrosioGigliSavare11-2}), $u$ has a Lipschitz representative, still denoted by $u$ in what follows. It remains to show that $u(x)\to 0$ as $\sfd(z,x)\to\infty$, for $z\in\X$. Suppose, by contradiction, that there is a sequence $(x_n)\subset \X$ satisfying  $\sfd(x_n,z)\to \infty$ as $n\uparrow \infty$ and  with the property that $u(x_n)\ge c>0$ for all $n \in \N$. Since $u \in \LIP(\X)$, denoting  $L\coloneqq \Lip(f)$, we see that for any $x \in B_{c/2L}(x_n)$ we have $u(x) \ge u(x_n) - L\sfd(x,x_n) \ge c/2$ and therefore
\[
 \int_{B_{c/(2L)}(x_n)}|u|^{2^*}\,\d\mm \ge (c/2)^{2^*} \mm\big(B_{c/(2L)}(x_n)\big) \ge  \omega_N c^{2^*}{\sf AVR}(\X)(c/(2L))^N >0.
\]
However this contradicts $u \in L^{2^*}(\mm)$.

We deduced all the regularity required to invoke Theorem \ref{thm:rigidity PolyaAVR}, so we know that that $\X$ is an $N$-Euclidean metric measure cone with tip $z_0$ and $u$ is radial, i.e. $u(x)= u^*\circ \, \avr(\X)^{\frac1N}\sfd(x,z_0)$. The conclusion follows since $u^*(t)=|u|^*(t) = a(1+bt^2)^{\frac{2-N}{2}}$ for some $a\in\R,b>0$.
\end{proof}

\section{Compactness of extremizing sequences} \label{sec:gen extremals}
A classical result using concentration compactness is that a sequence extremizing functions for the Sobolev inequality in $\R^n$, up to a rescaling, dilation and translation, converges up to a subsequence to an extremal function. In this part, we generalize this method to an extremizing sequence of functions defined on a sequence of $\RCD(0,N)$ spaces (Theorem \ref{thm:CC_Sobextremals}).
\subsection{Density upper bound} 
 We first address a technical density bound that will be needed in the proof of Theorem \ref{thm:CC_Sobextremals} to get pre-compactness in the pmGH-topology. This part is needed only for collapsed $\RCD$-spaces: a reader interested in the case of smooth manifolds can skip this subsection.
\begin{lemma}[Density bound from reverse Sobolev]\label{lem:density bound}
For every $N\in(2,\infty)$, $K \in \R$, there are constants $\lambda_{N,{K}}\in(0,1),\, r_{{K^-},N}>0$  (with $r_{0,N}=+\infty$), $C_{N,K}>0$  such that the following holds. Let $\Xdm$ be an $\RCD(K,N)$ space and $u \in W^{1,2}_{loc}(\X)\cap L^{2^*}(\mm)$, non-constant satisfying 
    \begin{equation}\label{eq:reverse sobolev lemma}
           \|u\|_{L^{2^*}(\mm)}^2\ge A\|\nabla u\|_{L^2(\mm)}^2,
    \end{equation}
    for some $A>0.$ Assume also that for some $\eta \in(0,\lambda_{N,K})$, $\rho \in (0,r_{{K^-},N}\wedge \frac{\lambda_{N,K}}8 \diam(\X))$ and $x\in\X$ it holds
    $$\|u\|^{2^*}_{L^{2^*}(B_{\rho}(x))}\ge (1-\eta) \|u\|_{L^{2^*}(\mm)}^{2^*}.$$
    Then
        \begin{equation}\label{eq:density bound}
        \frac{\mm(B_\rho(x))}{\rho ^N}\le \frac{C_{N,K}}{A^{N/2}}.
    \end{equation}
\end{lemma}
\begin{proof}
We fix a constant $\lambda=\lambda_{N,K}\in(0,1)$ sufficiently small and to be chosen later. We also fix a constant $r_{K^-,N}>0$, with $r_{0,N}=+\infty$ and with $r_{K^-,N}$ small and to be chosen later in the case $K<0$ ($r_{K^-,N}$ will be chosen after $\lambda_{N,K}$). Assume $\rho\le r_{K^-,N}$ and $\eta\le \lambda_{N,K}$ are as in the hypotheses.

Observe that $B_{4\lambda^{-1}\rho}(x)\subsetneq \X$. Up to choosing $r_{K^-,N}$ small enough (when $K<0$) we can assume that $4\lambda^{-1}\rho\le\tilde  r_{K^-,N}$, where $\tilde r_{K^-,N}>0$ is the one given by Lemma \ref{lem:local sobolev embedding}. Set $r\coloneqq 4\lambda^{-1}\rho\ge 4\rho$ and note that $B_{r}(x)\subsetneq \X$.

Fix a cut-off function $\phi \in \LIP_c(B_{r/2}(x))$ such that $\phi=1$ in $B_{r/4}(x)$, $0\le \phi\le 1$ and $\Lip(\phi)\le 10/r.$  Then from \eqref{eq:local sobolev}, since $r\le \tilde r_{K^-,N}$, we have
\begin{align*}
     &\|u\|_{L^{2^*}(B_{\rho}(x))}\le \|u\phi\|_{L^{2^*}(\mm)}\le \frac{C_{N,K}r}{\mm(B_{r}(x))^{1/N}} \|\nabla u\|_{L^2(\mm)}+\frac{10C_{N,K}}{\mm(B_{r}(x))^{1/N}} \| u\|_{L^2(B_{r}(x))}\\
     & \le \frac{C_{N,K}r}{\mm(B_{r}(x))^{1/N}} \|\nabla  u\|_{L^2(\mm)}+\frac{10C_{N,K}}{\mm(B_{r}(x))^{1/N}} 
     (\| u\|_{L^2(B_{\rho }(x))}+\| u\|_{L^2(B_r(x)\setminus B_{\rho }(x))})\\
      &\le \frac{C_{N,K}r}{\mm(B_{r}(x))^{1/N}} \|\nabla u\|_{L^2(\mm)}+\frac{10C_{N,K} \| u\|_{L^{2^*}(\mm)}}{\mm(B_{r }(x))^{1/N}} 
     (\mm(B_{\rho}(x))^{1/N}+\lambda^{1/2^*}\mm(B_{r}(x))^{1/N})
\end{align*}
Substituting \eqref{eq:reverse sobolev lemma}, applying \eqref{eq:reverse doubling} (up to choosing $r_{K^-,N}$ small enough so that $r\le R_{K^-,N}$), using that $1-\lambda < 1-\eta$ and simplifying $\| u\|_{L^{2^*}(\mm)}$, we reach
\[
(1-\lambda)^{1/2^*}\le \frac{C_{N,K}r}{\sqrt A\mm(B_{r}(x))^{1/N}}+10C_{N,K}((\lambda /4)^\gamma+ \lambda^{1/2^*}),
\]
where $\gamma>0$ is a constant depending only on $N.$
Choosing $\lambda$ small enough with respect to $N$ and $K$ gives  
\begin{equation}\label{eq:true density bound}
    \frac{\mm(B_\rho (x))}{r^N}\le \frac{\mm(B_r (x))}{r^N} \le \frac{C_{N,K}}{A^{N/2}}.
\end{equation}
Recalling that $r=4\lambda^{-1}\rho$ proves \eqref{eq:density bound}.
\end{proof}

\subsection{Concentration compactness for Sobolev extremals}\label{sec:CC}
In the following theorem we show that a sequence of extremizing functions defined on a sequence of $\RCD(0,N)$ spaces, after a suitable rescaling of both the function and the space, admits a subsequence converging to a limit extremal function on some limit $\RCD(0,N)$ space.  The idea is similar to the classical Lions' concentration-compactness principle (\cite{Lions84,Lions85}). The first step is a characterization of  the failure of compactness in the critical Sobolev embedding  by specific concentration and splitting of the mass phenomena (see Appendix \ref{sec:cc appendix}). The second step is observing that the extra information that the sequence is extremizing for the Sobolev inequality will prevent these pathological phenomena and ensure compactness. A crucial point will be to exploit the strict concavity property of the Sobolev inequality, and in particular of the function $t\mapsto t^{2/2^*}$, to deduce that splitting the mass is not convenient in an extremizing sequence. 
\begin{theorem}\label{thm:CC_Sobextremals}
   For every $N\in (2,\infty)$,  exists $\eta_N\in(0,1/2)$ such that the following holds. Let $(Y_n,\rho_n,\mu_n,y_n)$ be a sequence of pointed $\RCD(0,N)$   spaces  supporting a Sobolev inequality   \eqref{eq:convention}  with $A_n\to A >0$ and $B_n \to B \in [0,\infty)$  and also satisfying either $\sup_n \mu_n(B_1(y_n))<+\infty$   or $\diam(Y_n)> \eta_N^{-1}$.

Suppose there exist non-constant functions $u_n \in W^{1,2}(Y_n)$ with $\| u_n\|_{L^{2^*}(\mu_n)} =1$  and
\begin{align}
&\sup_{y \in Y_n} \int_{B_1(y)}|u_n|^{2^*}\,\d\mu_n=\int_{B_1(y_n)}|u_n|^{2^*}\,\d\mu_n = 1-\eta,\label{eq:Levyscalings} \\
 &\| u_n\|^2_{L^{2^*}(\mu_n)} \ge  \tilde A_n \|\nabla u_n\|^2_{L^2(\mu_n)} + B_n\| u_n\|_{L^2(\mu_n)}^2,\label{eq:extremals}
\end{align}
for  $\tilde A_n \to A$,  and  some $\eta \in(0,\eta_N).$ Then, up to a subsequence, it holds:
    \begin{itemize}
    \item[ \rm i)]
    $Y_n$ pmGH-converges to a pointed $\RCD(0,N)$-space $(Y,\rho,\mu,y)$ supporting a Sobolev inequality as in \eqref{eq:convention} with constants $A,B$;
    \item[ \rm ii)]  $u_n$ converges  $L^{2^*}$-strong to some $u \in W^{1,2}_{loc}(Y)$ with $|\nabla u| \in L^2(\mu)$ and
\[
       \int |\nabla u_n|^2\, \d \mu_n \to  \int |\nabla u|^2\,\d\mu, \qquad  \text{as }n\uparrow\infty.
\]
If $B>0$, then the convergence is also $W^{1,2}$-strong.
    \item[ \rm iii)] It holds
    \[ \| u\|^2_{L^{2^*}(\mu)} =  A \|\nabla u\|^2_{L^2(\mu)} + B\| u\|^2_{L^2(\mu)}.\]

\end{itemize}
\end{theorem}
\begin{proof} We subdivide the proof into different steps.

\noindent{\color{blue} {\sc Step 1}}. We take $\eta_N\coloneqq \frac{\lambda_{0,N}}8\wedge  \frac13$, with $\lambda_{0,N}$ as in Lemma \ref{lem:density bound}. In light of Theorem \ref{thm:pmGHstableRCD}, to extract a subsequence converging pmGH it is sufficient to check that $\mu_n(B_1(y_n)) \in (v^{-1},v) $ for some $v>1.$  If $\diam(Y_n)> \eta_N^{-1}\ge 8\lambda_{0,N}^{-1}$,  thanks to the assumptions \eqref{eq:Levyscalings} and \eqref{eq:extremals}, we can apply  Lemma \ref{lem:density bound} to obtain
\[
\limsup_n \mu_n(B_1(y_n))\le \limsup_n \frac{C_N }{ (\tilde A_n)^{N/2}}= \frac{C_N }{ A^{N/2}}<+\infty,
\]
otherwise $\sup_n \mu_n(B_1(y_1))<+\infty$ is directly true by the assumptions.
On the other hand, since by assumption the spaces $Y_n$ satisfy a Sobolev inequality with constants $A_n,B_n$, plugging in functions $\phi_n\in \LIP(Y_n)$ such that $\phi_n=1$ in $B_1(y_n)$ with $\supp \varphi_n \subset B_2(y_n)$, $0\le \phi_n\le 1$ and $\Lip(\phi_n)\le 1$, we get
\[
\mu_n(B_1(y_n))^{2/2^*}\le (A_n+B_n)\mu_n(B_2(y_n))\le 2^N(A_n+B_n)\mu_n(B_1(y_n)),
\]
where we used the Bishop-Gromov inequality. Since $\lim_n(A_n+B_n)=A+B>0$ we also obtain $\liminf_n \mu_n(B_1(y_n))>0.$ Therefore up to a not relabelled subsequence, the spaces $Y_n$ pmGH converge to a pointed $\RCD(0,N)$ space $(Y,\rho,\mu,y)$. Moreover, the stability of the Sobolev inequalities \cite[Lemma 4.1]{NobiliViolo21} ensures that $Y$ supports a Sobolev inequality as in \eqref{eq:convention} with constants $A,B$.
This settles point i).

\noindent{\color{blue} {\sc Step 2}}. From now on we assume to have fixed a realization of the convergence in a proper metric space $(\Z,\sfd)$ (as in Section \ref{sec:conv}). Let $\nu_n \coloneqq  |u_n|^{2^*}\mu_n \in \PP(\Z)$. Moreover   we will denote by $B_r(z)$, $z \in \Z,$ and by $B_r^n(y)$, $y \in \Y_n,$ respectively  the balls in $(\Z,\sfd)$ and in $(\Y_n,\rho_n)$, recalling that we are identifying $(\Y_n,\rho_n)$ as a subset of $(\Z,\sfd).$
From Lemma \ref{lem:ConcComp1} we have that, up to a subsequence, (exactly) one of cases i),ii),iii) in the statement of Lemma \ref{lem:ConcComp1} holds. We claim i) (i.e.\ compactness) occurs. First, notice that vanishing as in case ii) cannot occur:
\[ \limsup_{n\to\infty} \sup_{y \in Y_n }\nu_n(B_R(y)) \ge \limsup_{n\to\infty} \nu_n(B_1(y_n)) \stackrel{\eqref{eq:Levyscalings}}=1-\eta,\qquad  \forall R\ge 1.
\]
Thus, it remains to exclude the dichotomy case iii). Suppose by contradiction that  iii) of Lemma \ref{lem:ConcComp1} holds for some $\lambda \in (0,1)$ (with $\lambda \ge \limsup_{n}\sup_z \nu_n(B_R(z))$ for all $R>0$), sequences $R_n\uparrow \infty$, $(z_n)\subset \Z$ and measures $\nu_n^1,\nu_n^2$ with  $\supp (\nu_n^1) \subset B_{R_n}(z_n)$ and $\supp (\nu_n^2) \subset \Z\setminus B_{10R_n}(z_n)$. We claim first that $\supp (\nu_n^1) \subset B_{3R_n}(y_n)$ and $\supp (\nu_n^2) \subset \Z\setminus B_{4R_n}(y_n)$. Indeed $\lambda \ge \limsup_n \nu_n(B_1(y_n)) = 1-\eta$ and 
$$\liminf_n \nu_n(B_{R_n}(z_n)) \ge\liminf_n  \nu_n^1(B_{R_n}(z_n))=\lim_n \nu_n^1(\Z) = \lambda\ge 1-\eta.$$  Since $\nu_n(B_1(y_n))=1-\eta$ and $\eta<1/2$, this implies that for $n$ large enough $B_{R_n}(z_n) \cap B_1(y_n) \neq 0$, which implies the claim, provided $R_n\ge 1.$

 Let $\varphi_n$ be a Lipschitz cut-off so that $0\le  \varphi_n \le 1, \varphi_n \equiv 1$ on $B^n_{3R_n}(y_n)$, $\supp (\varphi_n)\subset B^n_{4R_n}(y_n)$  and $\Lip(\varphi_n)\le R_n^{-1}$, for every $n \in \N$. Since 
\begin{equation}
1\ge |\varphi_n|^2 + |(1-\varphi_n)|^2,\qquad\text{in $\Z$},\label{eq:phi one minus phi}
\end{equation} 
we can estimate by triangular inequality, the Leibniz rule and  Young inequality
\begin{equation}
\begin{split}
      \|&\nabla u_n\|^2_{L^2(\mu_n)}  \ge \| \varphi_n|\nabla u_n| \|^2_{L^2(\mu_n)} +  \| (1-\varphi_n) |\nabla u_n|  \|^2_{L^2(\mu_n)} \\
       &\ge \| \nabla (u_n\varphi_n)\|_{L^2(\mu_n)}^2+ \|\nabla (u_n(1-\varphi_n)) \|_{L^2(\mu_n)}^2 -\underbrace{2(1+\delta^{-1}) \| u_n|\nabla  \varphi_n|\|_{L^2(\mu_n)}^2 -2\delta \|\nabla u_n\|^2_{L^2(\mu_n)}}_{\coloneqq R_n(\delta)}
\end{split}
  \label{eq:estim 1}
\end{equation}
for every $\delta>0$ and every $n.$ Setting $O_n\coloneqq  B^n_{4R_n}(y_n)\setminus B^n_{3R_n}(y_n)$, we have by the H\"older inequality
\[ \| u_n |\nabla \varphi_n| \|^2_{L^2(\mu_n)} \le {R_n}^{-2}\| u_n\|^2_{L^{2^*}(O_n)} \mu_n(O_n)^{2/N} \le   16 v^{2/N}\| u_n\|^2_{L^{2^*}(O_n)},   \]
having used that $\mu_n(O_n) \le \mu_n(B^n_{4R_n}(y_n))\le (4R_n)^N\mu_n(B_1(y_n))\le    (4R_n)^Nv$, by the Bishop-Gromov inequality. Notice that we also have
\[ \limsup_{n\to\infty} \| u_n\|_{L^{2^*}(O_n)} \le  \limsup_{n\to\infty}\Big| 1 -  \nu^1_n(\Z)  - \nu^2_n(\Z)  \Big|^{1/2^*}  =0,\]
from which we get $ \lim_n \| u_n |\nabla \varphi_n| \|^2_{L^2(\mu_n)} =0$. Therefore, recalling that $\|\nabla u_n\|^2_{L^2(\mu_n)}$ is uniformly bounded by \eqref{eq:extremals}, choosing appropriately $\delta_n\to 0$,  we get
\begin{equation}\label{eq:zero reminder}
    R_n(\delta_n)\to 0.
\end{equation}
Combining \eqref{eq:estim 1} with \eqref{eq:zero reminder}, recalling that $\lim_n A_n = \lim_n \tilde A_n$,   we get
\begin{align*}
       1 &\overset{\eqref{eq:extremals}}{\ge}  \limsup_{n\to \infty}   A_n  \| \nabla (u_n\varphi_n) \|^2_{L^2(\mu_n)}   +  A_n\| \nabla (u_n(1-\varphi_n)) \|^2_{L^2(\mu_n)} + B_n\|u_n\|_{L^2(\mu_n)}^2   \\
      &\overset{\eqref{eq:convention}}{\ge}  \limsup_{n\to \infty}    \| u_n\varphi_n\|^2_{L^{2^*}(\mu_n)} +  \| u_n(1-\varphi_n)\|^2_{L^{2^*}(\mu_n)}  \\
      &\qquad +B_n\Big( \|u_n\|_{L^2(\mu_n)}^2-  \| u_n\varphi_n\|^2_{L^{2}(\mu_n)} - \| u_n(1-\varphi_n)\|^2_{L^{2}(\mu_n)} \Big)  \\
       &\overset{\eqref{eq:phi one minus phi}}{\ge} \limsup_{n\to \infty} \big( \nu^1_n(\Z)\big)^{2/2^*} +\big( \nu^2_n(\Z)\big)^{2/2^*} \\
      &\ge \lambda ^{2/2^*} +(1-\lambda)^{2/2^*} >1,
\end{align*}
having used the strict concavity of $t \mapsto t^{2/2^*}$ and the fact that $\lambda \in (0,1)$. This gives a contradiction, hence dichotomy in iii) cannot happen.

\noindent{\color{blue} {\sc Step 3}}. In the previous step, we proved that case i) in Lemma \ref{lem:ConcComp1} occurs, i.e.\  there exists $(z_n) \subset \Z$ such that for every $\eps>0$ there exists $R\coloneqq R(\eps)$ so that $\int_{B^n_R(z_n)} |u_n|^{2^*}\,\d\mu_n \ge 1-\eps$ for all $ n \in \N$. As soon as $\eps <1/2$, we have $B^n_R(z_n) \cap B^n_1(y_n) \neq \emptyset$ and
\begin{equation}
    \int_{B^n_{2R+1}(y_n)} |u_n|^{2^*}\,\d\mu_n \ge 1-\eps\qquad \forall n \in \N.\label{eq:usigmaCompactness}
\end{equation}
Moreover $y_n\to y$ in $\Z$, hence the sequence of probabilities $|u_n|^{2^*}\mu_n$ is tight ($\Z$ is proper) and, along a not relabelled subsequence, converges in duality with $C_{b}(\Z)$ to some $\nu\in \PP(Y)$. Additionally, up to a further subsequence we have that $u_n$ is $L^{2^*}$-weak convergent to some $u \in L^{2^*}(\mu)$ (\cite{AmbrosioHonda17}) with $\sup_n \|\nabla u_n\|_{L^2(\mu_n)}<\infty$ and also that $|\nabla u_n|^2\,\d\mu_n \weakto \omega$ in duality with $C_{bs}(\Z)$ for some bounded Borel measure $\omega.$
Applying Lemma \ref{lem:pmGHW12loc}, up to a further subsequence, we also deduce that $u_n$ converges $L^{2}_{loc}$-strong to some $u\in L^2_{loc}(\mu)$, together with the facts $u \in W^{1,2}_{loc}(Y)$ and $|\nabla u|\in L^2(\mu)$.  Note that if $B>0$ then actually $u \in W^{1,2}(Y)$, by  \eqref{eq:extremals} and the lower semicontinuity of the $L^2$-norm \eqref{eq:L2norm is Lp-lsc}.

We are in position to invoke Lemma \ref{lem:conccomp2} to infer the existence of countably many points $\{x_j\}_{j \in J}\subset Y$ and positive weights $(\nu_j),(\omega_j)\subset \R^+$, so that $\nu =|u|^{2^*}\mu +\sum_{j\in J}\nu_j\delta_{x_j}$ and $\omega \ge |\nabla u|^2\mu +\sum_{j\in J}\omega_j\delta_{x_j}$, with $A\omega_j \ge \nu_j^{2/2^*}$  and in particular $\sum_j \nu_j^{2/2^*}<\infty$.  Moreover up  to passing to a subsquence we can, and will, from now on assume that the limits $\lim_n \| \nabla   u_n \|^2_{L^2(\mu_n)}$ and $\lim_n B_n\|u\|^2_{L^2(\mu_n)}$ exist.
Finally, by the lower semicontinuity of the $L^2$-norm  (see \eqref{eq:L2norm is Lp-lsc}) we have  $B\|u\|^2_{L^2(\mu)} \le \lim_n  B_n\|u\|^2_{L^2(\mu_n)}$, where $B\|u\|^2_{L^2(\mu)}$ is taken to be zero when $B=0$ and $\|u\|^2_{L^2(\mu)}=+\infty.$ Also $\lim_n \| \nabla   u_n \|^2_{L^2(\mu_n)} \ge \omega(\Z)$.  Therefore
    \begin{align*}
        1 =  \lim_{n\to\infty} \int |u_n|^{2^*}\, \d \mu_n  &\ge \lim_{n\to\infty}  \tilde  A_n\| \nabla   u_n \|^2_{L^2(\mu_n)} + \lim_{n\to\infty}  B_n\|  u_n \|^2_{L^2(\mu_n)}  \\
               &\ge  A \omega(\Z) + B \|u\|^2_{L^2(\mu)}\\
         &\ge  A\int |\nabla  u|^2\,\d \mu + \sum_{j\in J} \nu_j^{2/2^*} + B \|u\|^2_{L^2(\mu)}\\
          &\overset{\eqref{eq:convention}}{\ge} \Big(\int |u|^{2^*}\, \d \mu\Big)^{2/2^*} + \sum_{j\in J} \nu_j^{2/2^*} \\
           &\ge \Big( \int |u|^{2^*}\, \d \mu  + \sum_{j\in J} \nu_j \Big)^{2/2^*} = \nu(Y)^{2/2^*} = 1,
    \end{align*}
    having used, in the last inequality, the concavity of the function $t^{2/2^*}$. In particular, all the inequalities must be equalities and, since $t^{2/2^*}$ is strictly concave, we infer that every term in the sum $\int |u|^{2^*}\, \d \mu  + \sum_{j\in J} \nu_j^{2/2^*}$ must vanish except one. By the assumption \eqref{eq:Levyscalings} and $|u|^{2^*}\mm_n \rightharpoonup \nu$ in $C_b(\Z),$ we have $\nu_j\le 1-\eta$ for every $j \in J$. Hence $\nu_j=0$ and $\|u\|_{L^{2^*}(\mu)}=1$.
    This means that $u_n$ converges $L^{2^*}$-strong to $u$. Moreover, retracing the equalities in the above we have that $\lim_n \int |\nabla u_n|^2\, \d \mu_n = \int |\nabla u|^2\,\d\mu$ and, when $B>0$, $\lim_n\int |u_n|^2\,\d\mu_n =\int|u|^2\,\d\mu$. This proves point ii). Finally, equality in the fourth inequality is precisely part iii) of the statement. The proof is now concluded.
\end{proof}
\section{Radial functions: technical results}

In this section, we prove results about convergence and approximation of radial functions.

The first one (Lemma \ref{lem:recovery extremals} below) says that, given a sequence of $\RCD$ spaces converging in the pmGH-sense,  a radial function on the limit space is the limit of the same radial functions along the sequence.

We will need  the following simple fact. We omit the proof, which is an easy consequence of Cavalieri's formula and Bishop-Gromov inequality. In this section, we denote $\sfd_{z}(.) \coloneqq \sfd(z,.)$ the distance function from a point $z$.

\begin{lemma}\label{lem:integral dist}
    Let $\Xdm$ be an $\RCD(0,N)$ space for some $N \in (2,\infty)$. Then for every $\alpha>N$, $z \in \X$ and $r>0$ it holds
    \begin{equation}\label{eq:integral distance}
        \int_{B_{r}(z)^c}\sfd_z(\cdot)^{-\alpha}\d \mm \le  \frac{\mm(B_r(z))}{r^N}  C_{N,\alpha} r^{N-\alpha}.
    \end{equation}
\end{lemma}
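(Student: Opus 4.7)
The plan is to combine Cavalieri's formula with the Bishop--Gromov inequality, as the authors suggest. I would first apply the layer-cake representation to rewrite
\begin{equation*}
\int_{B_r(z)^c} \sfd_z(\cdot)^{-\alpha}\, \d\mm \;=\; \int_0^\infty \mm\bigl(\{x\in B_r(z)^c : \sfd_z(x)^{-\alpha} > t\}\bigr)\, \d t.
\end{equation*}
For $t\ge r^{-\alpha}$ the set on the right is empty, since the condition $\sfd_z(x)< t^{-1/\alpha}$ is then incompatible with $x\in B_r(z)^c$; for $0<t<r^{-\alpha}$ it is contained in $B_{t^{-1/\alpha}}(z)$.

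Next, since $\Xdm$ is $\RCD(0,N)$, the Bishop--Gromov inequality \eqref{eq:Bishop} with $K=0$ (so that $v_{0,N}(r)=\omega_N r^N$) gives $\mm(B_R(z))\le (R/r)^N\mm(B_r(z))$ for every $R\ge r$. Applying this with $R=t^{-1/\alpha}$ yields
\begin{equation*}
\mm\bigl(B_{t^{-1/\alpha}}(z)\bigr) \;\le\; \frac{\mm(B_r(z))}{r^N}\, t^{-N/\alpha},\qquad t\in(0,r^{-\alpha}),
\end{equation*}
and plugging this bound into the preceding identity one arrives at
\begin{equation*}
\int_{B_r(z)^c} \sfd_z(\cdot)^{-\alpha}\, \d\mm \;\le\; \frac{\mm(B_r(z))}{r^N} \int_0^{r^{-\alpha}} t^{-N/\alpha}\, \d t.
\end{equation*}

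The remaining one-dimensional integral is finite precisely because $\alpha>N$ (equivalently $-N/\alpha>-1$), and an elementary evaluation produces $\tfrac{\alpha}{\alpha-N}\,r^{N-\alpha}$, yielding the claim with $C_{N,\alpha}=\alpha/(\alpha-N)$. I do not foresee any genuine obstacle here; the only point worth flagging is that the exponent assumption $\alpha>N$ is exactly what ensures integrability of the time variable near $t=0$, which in turn reflects integrability of the decay $\sfd_z^{-\alpha}$ at infinity against the $N$-dimensional volume growth. A dyadic annular decomposition $B_r(z)^c=\bigsqcup_{k\ge 0}\bigl(B_{2^{k+1}r}(z)\setminus B_{2^k r}(z)\bigr)$ combined with the same doubling estimate would give an equally short alternative proof with identical constants up to an irrelevant factor.
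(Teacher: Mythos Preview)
Your proof is correct and follows exactly the approach the paper indicates (Cavalieri's formula combined with the Bishop--Gromov inequality); the paper in fact omits the proof entirely, stating only that it is an easy consequence of these two ingredients. Your explicit constant $C_{N,\alpha}=\alpha/(\alpha-N)$ and the alternative dyadic-annulus argument are both fine.
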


\begin{lemma}\label{lem:recovery extremals}
    Let $(Y_n,\rho_n,\mu_n,z_n)$ be a sequence of $\RCD(K,N)$ spaces, for some $K\in\R,N\in(2,\infty)$, that is pmGH-converging to $(Y,\rho,\mu,z_0)$. Let $p\in (1,\infty)$ and $f\in C(\R)$ satisfying $|f(t)|^p\le C|t|^{-\alpha}$, for some $\alpha>0$. Suppose also that 
    \begin{equation}\label{eq:growth assumption}
          \lim_{R\to +\infty} \sup_n \int_{B_R(z_n)^c} \rho_{z_n}^{-\alpha} \d \mu_n =0,
    \end{equation}
    where $\rho_{z_n}(\cdot )\coloneqq \rho_n(\cdot ,z_n).$     Then, $f\circ \rho_{z_n}$ converges $L^p$-strong to $f\circ \rho_{z_0}$. In particular, for any $u_n \in L^p(\mu_n)$ that converges $L^p$-strong to $f\circ \rho_{z_0}$, it holds
    \begin{equation}\label{eq:classic l2 conv}
    \|u_n - f\circ \rho_{z_n}\|_{L^p(\mu_n)}\to 0.
    \end{equation}
    \end{lemma}
\begin{proof}
We only need to prove that $f\circ \rho_{z_n}$ converges $L^p$-strong to $f\circ \rho_{z_0}$, then \eqref{eq:classic l2 conv} follows from the linearity of the $L^p$-convergence \eqref{eq:linearity Lpconvergence}.

The assumptions on $f$ imply that $f$ is uniformly continuous and we denote by $\omega: [0,\infty)\to [0,\infty)$ a global modulus of continuity for $f$. Observe that $f$ is also bounded. In the sequel, we fix $(\Z,\sfd)$ a realization of the convergence and recall that $\sfd \restr {Y_n \times Y_n} = \rho_n $. We can estimate
\begin{align*}
     \int |f\circ  \sfd_{z_0} &-f \circ \sfd_{z_n}|^p\d \mu_n \le \int_{B_R(z_n)} |f\circ  \sfd_{z_0} -f \circ \sfd_{z_n}|^p\d \mu_n+2^p\int_{Z\setminus B_R(z_n)} |f|^p\circ  \sfd_{z_0}+|f|^p \circ \sfd_{z_n}\d \mu_n\\
    &\le \mu_n(B_R(z_n))\omega(\sfd(z_0,z_n))^p+ 2^pC\int_{B_R(z_n)^c} \sfd(z_0,\cdot)^{-\alpha}+ \sfd(z_n,\cdot)^{-\alpha}\d \mu_n\\
    &\le\mu_n(B_R(z_n))\omega(\sfd(z_0,z_n))^p+ 2^p C\cdot 2^\alpha \sup_n \int_{B_R(z_n)^c} \sfd(z_n,.)^{-\alpha} \d \mu_n,
\end{align*}
where in the last step we assume that $n$ is big enough so that $\sfd(z_n,z_0)< R/2,$ which ensures $ \sfd^{-1}(z_0,\cdot) \le 2  \sfd^{-1}(z_n,\cdot)$ in $B_R(z_n)^c.$ Since $\sup_n \mu_n(B_R(z_0))<+\infty$ for every $R>0$, by the pmGH-convergence, we can send first $n\uparrow \infty$ and then $R\uparrow \infty$ to obtain $\|f\circ  \sfd_{z_0}-f \circ \sfd_{z_n}\|_{L^p(\mu_n)}\to0.$  Fix $\phi \in C_{bs}(\Z)$ and $R>0$ so that $\supp(\phi) \subset B_{R}(z_0)$, then
\begin{align*}
   & \left |\int \phi  f\circ  \sfd_{z_0} \d \mu- \int \phi  f\circ  \sfd_{z_n} \d \mu_n \right|\le  \\
   &\le  \|\phi\|_\infty \int_{\supp \phi} |f\circ  \sfd_{z_0}-f \circ \sfd_{z_n}| \d \mu_n+ \left|\int \phi f\circ  \sfd_{z_0}  \d \mu_n-\int \phi f\circ  \sfd_{z_0}  \d \mu \right|\\
   &\le \|\phi\|_\infty \mu_n(B_R(z_0))^{1-1/p}\|f\circ  \sfd_{z_0}-f \circ \sfd_{z_n}\|_{L^p(\mu_n)} + \Big|\int \phi f\circ  \sfd_{z_0}  \d \mu_n-\int \phi f\circ  \sfd_{z_0}  \d \mu \Big|.
\end{align*}
Sending $n\uparrow \infty$ we obtain that $ f\circ  \sfd_{z_n} \d \mu_n\rightharpoonup   f\circ  \sfd_{z_0} \mu$ in duality with $C_{bs}(\Z).$ It remains to prove that $\| f\circ  \rho_{z_n}\|_{L^p(\mu_n)}\to \| f\circ  \rho_{z_0}\|_{L^p(\mu)}$.  Since $\|f\circ  \sfd_{z_0}-f \circ \sfd_{z_n}\|_{L^p(\mu_n)}\to0,$ it is enough to show that  $\| f\circ  \sfd_{z_0}\|_{L^p(\mu_n)}\to \| f\circ  \rho_{z_0}\|_{L^p(\mu)}$. Clearly $\| f\circ  \rho_{z_0}\|_{L^p(\mu)} = \| f\circ  \sfd_{z_0}\|_{L^p(\mu)}\le \liminf_n \| f\circ  \sfd_{z_0}\|_{L^p(\mu_n)},$ hence we only need to show  $\| f\circ \sfd_{z_0}\|_{L^p(\mu)}\ge \limsup_n \| f\circ  \sfd_{z_0}\|_{L^p(\mu_n)}.$ We can assume $n$ is big enough so that $\sfd(z_0,z_n)\le 1$. For every $R\ge 4$ fix a cut-off function $\phi_R \in C_{bs}(\Z)$, $0\le \phi_R\le 1$, such that $\phi_R\equiv 1$ in $B_{R}(z_0)$ and with support in $B_{2R}(z_0).$ Then
\[
\int |\phi_R(|f|^p\circ \sfd_{z_0})-|f|^p\circ \sfd_{z_0}| \d \mu_n\le \int_{B_{R}(z_0)^c} 
|f|^p\circ \sfd_{z_0}\d \mu_n \le 2\cdot 2^{\alpha} C\sup_n \int_{B_{R/2}(z_n)^c} \sfd_{z_n}^{-\alpha}\d \mu_n,
\]
where we have used that $B_{R}(z_0)^c\subset B_{R/2}(z_n)^c$ and $\sfd_{z_0}^{-1}\le 2 \sfd_{z_n}^{-1}$ in $B_{R/2}(z_n)^c.$ This shows that 
\[
\left |\int \phi_R(|f|^p\circ \sfd_{z_0})-|f|^p\circ \sfd_{z_0} \d \mu_n\right|\le \eps_R \to 0,\qquad \text{as }R\uparrow\infty,
\]
where $\eps_R$ is independent of $n.$ Therefore
\[
-\eps_R+\limsup_n \int |f|^p \circ \sfd_{z_0} \d \mu_n \le \limsup_n \int \phi_R(|f|^p \circ \sfd_{z_0}) \d \mu_n =\int  \phi_R(|f|^p \circ \sfd_{z_0}) \d \mu\le \int| f|^p \circ \rho_{z_0} \d \mu.
\]
Sending $R$ to infinity, we conclude the proof.
\end{proof} 
The second result of this section is a technical fact that will play a key role in the proof of our main theorem. It states that a Euclidean bubble which is strongly concentrated around a  point is close to a spherical bubble. 
\begin{lemma}\label{lem:stability when concentrating}
   For every $N\in(2,\infty),$ there are constants $C_N,\alpha=\alpha(N)>0$  such that the following holds. Given $\sigma\ge 1$, set $2^*=2N/(N-2)$ and
\[
f_{\sf eu}(t) \coloneqq \frac{\sigma^{\frac{N-2}2}}{\big(  1+\sigma^2t^2\big)^{\frac{N-2}2}}, \quad  f_{\sf sphere}(t) \coloneqq \frac{\sigma^{\frac{N-2}2}}{\big(  1+2\sigma^2(1-\cos(t )\big)^{\frac{N-2}2}}, \qquad t \in [0,\pi].
\]
Let $\Xdm$ be $\RCD(N-1,N)$, $z \in \X$, $\sfd_z(.)\coloneqq \sfd(z,.)$ and $v\coloneqq \sigma^N\mm(B_{\sigma^{-1}}(z))$. Then 
    \[
   \| (f_{\sf eu} - f_{\sf sphere})(\sfd_z)\|_{L^{2^*}(\mm)}+ \| \nabla (f_{\sf eu} - f_{\sf sphere})(\sfd_z)\|_{L^{2}(\mm)}\le C_N \sigma^{-\alpha}(\sqrt v +1).
    \]
\end{lemma}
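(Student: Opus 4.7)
My strategy is to derive sharp pointwise estimates for $f_{\sf eu}-f_{\sf sphere}$ and $(f_{\sf eu}-f_{\sf sphere})'$ via Taylor expansion of the cosine, and then integrate them against $\mm$ using Bishop--Gromov to control the ball volumes. Set $a(t)\coloneqq 1+\sigma^2 t^2$ and $b(t)\coloneqq 1+2\sigma^2(1-\cos t)$. On $[0,\pi]$ Taylor yields $0\le t^2-2(1-\cos t)\le C_N t^4$, and the lower bound $1-\cos t\ge c_N t^2$ gives $b(t)\asymp a(t)$. The mean value theorem applied to $F(x)=x^{-(N-2)/2}$ produces
\[
|f_{\sf eu}(t)-f_{\sf sphere}(t)|\le C_N\,\sigma^{(N+2)/2}\,t^4\,(1+\sigma^2 t^2)^{-N/2}.
\]
Differentiating and splitting $f_{\sf eu}'-f_{\sf sphere}'=-(N-2)\sigma^{(N+2)/2}[t(a^{-N/2}-b^{-N/2})+(t-\sin t)b^{-N/2}]$, the bounds $|a^{-N/2}-b^{-N/2}|\le C_N(a-b)b^{-N/2-1}$ and $|t-\sin t|\le t^3/6$ give
\[
|(f_{\sf eu}-f_{\sf sphere})'(t)|\le C_N\,\sigma^{(N+2)/2}\,t^3\,(1+\sigma^2 t^2)^{-N/2}.
\]

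For the gradient norm, since $|\nabla\sfd_z|\le 1$ a.e., Lemma~\ref{lem:chain rule} gives $|\nabla(f\circ\sfd_z)|\le|f'(\sfd_z)|$, so
\[
\|\nabla(f_{\sf eu}-f_{\sf sphere})(\sfd_z)\|_{L^2(\mm)}^2\le C_N\sigma^{N+2}\int_0^\pi t^6(1+\sigma^2 t^2)^{-N}\,d\phi(t),
\]
where $\phi(t)\coloneqq\mm(B_t(z))$. Bishop--Gromov together with $v_{N-1,N}(1/\sigma)\asymp\sigma^{-N}$ (since $\sigma\ge 1$) yields $\phi(t)\le v\sigma^{-N}$ for $t\le 1/\sigma$ and $\phi(t)\le C_N v t^N$ for $t\in[1/\sigma,\pi]$. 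Splitting the integral at $1/\sigma$: the inner piece is bounded by $\sigma^{-6}\cdot v\sigma^{-N}$; on the outer region one uses $(1+\sigma^2 t^2)^{-N}\le\sigma^{-2N}t^{-2N}$ and integrates by parts against $d\phi$ with $\phi(t)\le C_N v t^N$, which reduces matters to estimating $\int_{1/\sigma}^\pi t^{5-N}\,dt$. In every case one obtains $\int_0^\pi t^6(1+\sigma^2 t^2)^{-N}\,d\phi\le C_N v\sigma^{-\beta}$ for some $\beta=\beta(N)>N+2$; multiplying by $\sigma^{N+2}$ gives $\|\nabla(f_{\sf eu}-f_{\sf sphere})(\sfd_z)\|_{L^2(\mm)}\le C_N\sqrt{v}\,\sigma^{-\alpha'}$ for some $\alpha'=\alpha'(N)>0$.

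The $L^{2^*}$-estimate proceeds identically: raising the pointwise bound to the $2^*$-th power and applying the same splitting/integration-by-parts scheme produces $\|(f_{\sf eu}-f_{\sf sphere})(\sfd_z)\|_{L^{2^*}(\mm)}^{2^*}\le C_N v\,\sigma^{-\gamma}$ for some $\gamma=\gamma(N)>0$. Extracting the $(2^*)^{-1}$-th root yields a factor $v^{1/2^*}$; since $1/2^*<1/2$, the elementary inequality $v^{1/2^*}\le 1+\sqrt{v}$ holds (for $v\le 1$, $v^{1/2^*}\le 1$; for $v\ge 1$, $v^{1/2^*}\le v^{1/2}$), converting the bound into $C_N(1+\sqrt{v})\sigma^{-\gamma/2^*}$. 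Setting $\alpha\coloneqq\min(\alpha',\gamma/2^*)$ and summing the two estimates yields the claim.

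The main technical point is the exponent extraction in the weighted Stieltjes integrals $\int_0^\pi t^p(1+\sigma^2 t^2)^{-q}\,d\phi(t)$: the dominant contribution and thus the final power of $\sigma$ depend on the sign of quantities such as $N-6$ (and analogously $N-8$ in the $L^{2^*}$-case); a small case analysis is therefore needed to verify that a strictly positive exponent emerges in every regime, so that the final $\alpha=\alpha(N)>0$. All other steps (the two pointwise Taylor estimates and the volume comparisons) are routine.
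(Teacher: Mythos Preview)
Your proof is correct and follows the same core strategy as the paper: Taylor-expand the cosine to get pointwise bounds on $f_{\sf eu}-f_{\sf sphere}$ and its derivative, then integrate using Bishop--Gromov volume comparison after splitting into a near region and a far region around $z$. The bookkeeping differs: the paper introduces a free cutoff parameter $\eta\in(0,1)$, splits at radius $1/(\eta\sigma)$, uses a crude $L^\infty$ bound on the difference inside (depending on $\eta$) together with separate decay estimates $|f_{\sf eu}|^{2^*},|f_{\sf sphere}|^{2^*}\le C_N\sigma^{-N}t^{-2N}$ outside (via Lemma~\ref{lem:integral dist}), and finally chooses $\eta=\sigma^{-\beta}$ for some small $\beta>0$; this optimization step replaces your case analysis on $N$. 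Your sharper pointwise bound $(1+\sigma^2 t^2)^{-N/2}$ together with the Stieltjes representation is arguably more direct, but the case distinctions you flag (around $N=6$ for the gradient and the analogous threshold for the $L^{2^*}$ piece) do need to be written out, including the boundary terms from integration by parts; they all give a strictly positive exponent as you claim.
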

\begin{proof}
We fix $\eta\in(0,1)$  to be chosen later.   Denote $B\coloneqq B_{\frac1{\eta\sigma}}(z)$.   In what follows $C_N>0$ is a constant depending only on $N$, its value may vary from line to line without notice and without being relabelled.  By Bishop-Gromov and the assumptions, we get
\begin{equation}
\mm(B) \le v(\eta\sigma)^{-N}.\label{eq:growth zn} 
\end{equation}
We divide the proof into two steps, one for the $L^{2^*}$-norm and one for the $L^2$-norm of the gradient. 

\noindent{\color{blue}\textsc{ Step 1}}. We start estimating
\begin{align*}
    \| (f_{\sf eu} - f_{\sf sphere})&(\sfd_z)\|_{L^{2^*}(\mm)}\le \| (f_{\sf eu}-f_{\sf sphere})(\sfd_z)\|_{L^{2^*}(B)} \\
    &+ \|f_{\sf sphere}(\sfd_z)\|_{L^{2^*}(B^c)}+\| f_{\sf eu}(\sfd_z)\|_{L^{2^*}(B^c)}\eqqcolon \I +\II +\III.
\end{align*}
We analyze each term separately. We start with $\I $. Recall that
\[
| 2(1-\cos(t) )-t^2 |\le ct^4 , \qquad 1-\cos(t) \le c t^2,\qquad\forall t\ge 0,
\]
for some numerical constant $c>0.$
Using $||x|^p-|y|^p|\le C_p|x-y|(|x|^{p-1}+|y|^{p-1})$ with $p=(N-2)/2$ and the above  estimates we have for all $ t \in [0,(\eta\sigma)^{-1})$  the following:
 \[
 \begin{aligned}
   |f_{\sf eu}-f_{\sf sphere}|(t) & \le  \frac{C_N\sigma\big|2(1-\cos(t)) - t^2 \big|  \big( \big | \frac 1\sigma +  2\sigma(1-\cos(t)) \big|^{\frac{N-2}{2}-1}   +\big|     \frac 1\sigma +\sigma t^2\big|^{\frac{N-2}{2}-1} \big)}{\big(\frac 1\sigma + \sigma t^2\big)^{\frac{N-2}{2}}\big(\frac 1\sigma+2\sigma(1-\cos(t))\big)^{\frac{N-2}{2}}} \\
   &\le  C_N\frac{\sigma \frac1{(\eta \sigma)^4} \cdot (\sigma^{-1}+(\eta^2\sigma)^{-1})^{{\frac{N-2}{2}-1}}}{\sigma^{2-N} }\le C_N \eta^{-N} \sigma^{\frac{N-2}{2}-2}.
 \end{aligned}
 \]
 This and \eqref{eq:growth zn} directly implies that
 \[
(\I )^{2^*}  =\int_{B_{\frac{1}{\eta \sigma}}(z_n)} |f_{\sf eu} - f_{\sf sphere}|^{2^*}(\sfd_z)\,\d\mm \le  C_N v \eta^{-N(2^*+1)}\sigma^{-2\cdot 2^*}.
 \]
We pass to $\II $. Note that $|f_{\sf sphere}(t)|^{2^*},|f_{\sf eu}(t)|^{2^*}\le C_N \sigma^{-N}t^{-2N}$,  having used that $1-\cos(t)\ge c t^2$ in $[0,\pi]$ for some numerical constant $c>0.$ Hence applying Lemma \ref{lem:integral dist} and using \eqref{eq:growth zn}
\[ 
(\II )^{2^*}+(\III )^{2^*}  \le  C_N v\sigma^{-N}(\sigma \eta)^{N}\le vC_N \eta^N.
\]

\noindent{\color{blue}\textsc{Step 2}}. From the chain rule for the gradient and the fact that $|\nabla \sfd(z,.)|=1$ $\mm$-a.e., we have
\begin{align*}
    \| \nabla (f_{\sf eu} - f_{\sf sphere})&(\sfd_z)\|_{L^{2}(\mm)}\le \| (f_{\sf eu}'-f_{\sf sphere}')(\sfd_z)\|_{L^{2}(B)} \\
    &+ \|f_{\sf sphere}'(\sfd_z)\|_{L^{2}(B^c)}+\| f_{\sf eu}'(\sfd_z) \|_{L^{2}(B^c)}\eqqcolon \I '+\II '+\III ',
\end{align*}

We start with $\I'_n$.  Reasoning similarly to Step 1, we can estimate for all $ t \in [0,(\eta\sigma)^{-1})$
 \[
 \begin{aligned}
   |f_{\sf eu}'-f_{\sf sphere}'|(t) &= (N-2)\sigma \Big| \frac{t \big(\frac 1\sigma+2\sigma(1-\cos(t))\big)^{\frac N2} -\sin(t)\big(\frac 1\sigma + \sigma t^2\big)^{\frac N2}}{\big(\frac 1\sigma + \sigma t^2\big)^{\frac N2}\big(\frac 1\sigma+2\sigma(1-\cos(t))\big)^{\frac N2}} \Big| \\
   &\le C_N \sigma^{N+2} t\,\big|2(1-\cos(t)) - t^2\big|  \big( \big ( \frac 1\sigma +  2\sigma(1-\cos(t)) \big)^{\frac{N}{2}-1}   +\big(     \frac 1\sigma +\sigma t^2\big)^{\frac{N}{2}-1} \big)\\
   & \quad +  C_N \sigma^{N+1} |\sin(t)-t|\big(     \frac 1\sigma +\sigma t^2\big)^{\frac{N}{2}}\\
   &\le   C_N \sigma^{N+2} t^5  \big(     \frac 1\sigma + \frac 1{\sigma\eta^ 2}\big)^{\frac{N}{2}-1}+ C_N \sigma^{N+1} t^3  \big(     \frac 1\sigma + \frac 1{\sigma\eta^ 2}\big)^{\frac{N}{2}-1}\le  C_N \sigma^{\frac N2-2} \eta^{-N-3}.
 \end{aligned}
 \]
Therefore, again using \eqref{eq:growth zn} we deduce $(\II'_n)^2 \le C v \eta^{-3N-6} \sigma^{-4}$. As above we can directly estimate 
$$|f_{\sf eu}'|,|f_{\sf sphere}'|^2 \le C_N\sigma^{-N+2}t^{2-2N}, \quad t \in [0,\pi],$$
having used $|\sin(t)|\le c t$ and $1-\cos(t)\ge c t^2$ in $[0,\pi]$.
Hence  by Lemma \ref{lem:integral dist} and using \eqref{eq:growth zn}
\[
(\II'_n)^2+(\III'_n)^2 \le C_N v\sigma^{-N+2}(\sigma \eta)^{N-2}\le vC_N \eta^{N-2}.
\]
Combining all cases and taking $\eta\coloneqq \sigma^{-\beta}$ with $\beta>0$ small enough depending on $N$ we conclude, using also that $v^{1/2^*}+v^{1/2}\le 2+2\sqrt v$.
\end{proof}

\section{Proof of the main results}
\subsection{Stability in the compact case}
In this part, we prove the main qualitative stability result of this note. Note that this proves our main Theorem \ref{thm:qualitative SobCompact intro}. We will also provide a proof of Corollary \ref{cor:yamabe} at the end.

Given $N>2$ the family of spherical bubbles in a metric space $(\X,\sfd)$ is denoted by
\[
\mathcal{M}_{\sf sphere}(\X) \coloneqq \{  a(1 - b \cos \sfd(x,z_0) )^{\frac{2-N}{2}} \colon  a\in\R, b\in(0,1),z_0 \in \X \} \cup \{ u\equiv a \colon a \in \R\}.
\]

\begin{theorem}\label{thm:qualitative Optimal Sobolev RCD}
For every $\eps>0$ and $N\in(2,\infty)$ there exists $\delta\coloneqq \delta(\eps,N)>0$ such that the following holds. Let $\Xdm$ be an $\RCD(N-1,N)$ space for some $N\in(2,\infty)$ with $\mm(\X)=1$, set $2^*=2N/(N-2)$ and suppose that there exists $u\in W^{1,2}(\X)$ non-constant satisfying 
\begin{equation}\label{eq:almost extremal comact rcd}
    \frac{ \| u \|_{L^{2^*}(\mm)}^2 - \|u\|^2_{L^2(\mm)} }{\|\nabla u\|^2_{L^2(\mm)} } > \frac{2^*-2}{N} -  \delta.
\end{equation}
Then there exists $w \in \mathcal{M}_{\sf sphere}(\X)$ such that
\begin{equation}\label{eq:close to bubble rcd}
    \frac{\| \nabla(u- w )\|_{L^2(\mm)} + \|u-w\|_{L^{2^*}(\mm)}}{\|u\|_{L^{2^*}(\mm)}} \le \eps.
\end{equation}
Moreover if $w\equiv a \in \R$, then $a\in\R$ can be chosen  so that the reminder
$$R\coloneqq u-a$$ 
satisfies for some $x\in\X$
\begin{equation}
    \| R\cdot \|R\|_{L^2}^{-1}- \sqrt{N+1}\cos(\sfd(\cdot,x))\|_{L^2}\le C_N (\eps^\alpha+\delta)^{\beta},
\end{equation}
for some positive constants $\alpha, \beta, C_N$ depending only on $N$.
\end{theorem}
\begin{proof} By scaling invariance, it is not restrictive to assume $\|u\|_{L^{2^*}(\mm)}=1$. We only need to prove the first part, as the second follows from Proposition \ref{prop:reminder cosine} below.

We argue by contradiction and suppose that there exist $\eps>0$, a sequence of $\RCD(N-1,N)$ spaces $(\X_n,\sfd_n,\mm_n)$ and non-constant functions $u_n \in W^{1,2}(\X_n)$ with $\|u_n\|_{L^{2^*}(\mm_m)}=1$ so that
\begin{equation}
    \|u_n\|^2_{L^{2^*}(\mm_n)} \ge \tilde A_n\| \nabla  u_n \|_{L^2(\mm_n)} + \|u_n\|^2_{L^2(\mm_n)}, \label{eq:reverse Sobolev un}
\end{equation}
with $\tilde A_n\to \frac{2^*-2}{N}$ and satisfying
\begin{equation}
   \inf_{w \in \mathcal{M}_{\sf sphere}(\X_n)} \| \nabla(u_n- w)\|_{L^2(\mm_n)} + \| u_n- w\|_{L^{2^*}(\mm_n)}  >\eps,\qquad \forall  n \in \N,\label{eq:eps distance extremal}
\end{equation}
Let us fix $\eta <(\eta_N\wedge \frac13)$, where $\eta_N$ is as in Theorem \ref{thm:CC_Sobextremals}. For every $n$ there exist  $y_n \in \X_n$ and  $t_n< \diam(\X_n)$   such that
\begin{equation} 
1-\eta =  \int_{B_{t_n}(y_n)} |u_n|^{2^*}\, \d \mm_n = \sup_{y \in \X_n}  \int_{B_{t_n}(y)} |u_n|^{2^*}\, \d \mm_n,\qquad \forall n \in\N.\label{eq:yn maximizing}
\end{equation}
This follows directly by Bishop Gromov inequality and the properness of the space.  Define now $\sigma_n \coloneqq  t_n^{-1}$ and consider the sequence  $(Y_n,\rho_n,\mu_n,y_n) \coloneqq  (\X_{\sigma_n}, \sfd_{\sigma_n},\mm_{\sigma_n},y_n)$, where $\sfd_{\sigma_n}\coloneqq \sigma_n \sfd_n$, $\mm_{\sigma_n}\coloneqq \sigma_n^N \mm_n$ and $u_{\sigma_n}\coloneqq \sigma_n^{-N/2^*}u_n \in W^{1,2}(Y_n)$. In particular, by scaling, it holds that
\begin{equation}
        1-\eta =  \int_{B_1(y_n)} |u_{\sigma_n}|^{2^*}\, \d \mu_n =\sup_{y \in Y_n}  \int_{B_{1}(y)} |u_{\sigma_n}|^{2^*}\, \d \mm_n,\label{eq:normalization un}
\end{equation}
and also
\begin{equation}
      1 = \| u_{\sigma_n}\|^2_{L^{2^*}(\mu_n)}  \ge \tilde A_n \| \nabla  u_{\sigma_n}\|^2_{L^2(\mu_n)} + \sigma_n^{-2} \| u_{\sigma_n} \|^2_{L^2(\mu_n)}, 
\end{equation}
for all $n \in \N$. Moreover, $Y_n$ supports a Sobolev inequality with constants $A_n=(2^*-2)/N, B_n= \sigma_n^{-2}$. Since $t_n \le \diam(\X_n)\le \pi$, up to a subsequence we  have that $\lim_n\sigma_n = \sigma \in [\pi^{-1},+\infty]$ and, consequently, that $B_n \to \sigma^{-2} \in [0,\pi^2]$. Thanks to \cite[Theorem 1.10]{NobiliViolo21} and up to passing to a subsequence, we can assume that $\diam(\X_n)\ge \pi/2$ and in particular that $\diam(Y_n)\ge \sigma_n\pi/2$. Moreover $\mu_n(B_1(y_n))\le \mu_n(Y_n)=\sigma_n^N$. Hence, up to a subsequence and no matter the value of $\sigma $, the hypotheses of Theorem \ref{thm:CC_Sobextremals} are satisfied.
Applying Theorem \ref{thm:CC_Sobextremals} we get that, up to a further subsequence,  $Y_n$ pmGH-converge to a pointed metric measure space $(Y,\rho,\mu,\bar y)$ and that $u_{\sigma_n}$ converges $L^{2^*}$-strong  to some $u \in W^{1,2}(Y)$ satisfying
\[
\| u\|^2_{L^{2^*}(\mu)} =  \frac{2^*-2}{N} \|\nabla u\|^2_{L^2(\mu)} + \sigma^{-2}\| u\|^2_{L^2(\mu)},
\]
(where it is intended that $\sigma^{-2}\| u\|^2_{L^2(\mu)} =0$ if $\sigma = \infty$ and $u \notin L^2(\mu)$). We distinguish now two cases, depending on the value of $\sigma$.

\noindent{{\color{blue} \sc Case 1:}} $\sigma <\infty $. In this case, $B_n \to B\coloneqq \sigma^{-2}>0$ and $Y_n$ are compact of uniformly bounded diameter. Therefore, $Y_n$ mGH-converges to $Y$ and $u_{\sigma_n}$ converges also $W^{1,2}$-strong to $u \in W^{1,2}(Y)$ (recall $ii)$ in Theorem \ref{thm:CC_Sobextremals} when $B>0$). Define $(\X_\infty,\sfd_\infty,\mm_\infty) \coloneqq (Y,\rho/\sigma,\mu/\sigma^N)$ so that $\X_\infty$ is a $\RCD(N-1,N)$ space with $\mm_\infty(\X_\infty)=1$. By $iii)$ in  Theorem \ref{thm:CC_Sobextremals} the function $v \coloneqq \sigma^{N/2^*}u \in W^{1,2}(\X_\infty)$ satisfies
\[
     \| v\|^2_{L^{2^*}(\mm_\infty)} =  \frac{2^*-2}{N} \| \nabla  v \|^2_{L^2(\mm_\infty)} + \| v\|^2_{L^2(\mm_\infty)}.
\]
Here, we distinguish two situations: $v$ is constant, or not.
If $v$ is constant, then $v\equiv1$ and $u \equiv \sigma^{-N/2^*}$. By linearity of convergence \eqref{eq:linearity Lpconvergence}, $u_{\sigma_n}- \sigma^{-N/2^*}$ converges $W^{1,2}$-strong and $L^{2^*}$-strong to zero so, by scaling, we reach
\[ 
\begin{aligned}
 0 &=\lim_{n\to\infty} \| \nabla( u_{\sigma_n}- \sigma^{-N/2^*}) \|_{L^2(\mm_{\sigma_n})}^2 + \| u_{\sigma_n}- \sigma^{-N/2^*} \|^2_{L^{2^*}(\mm_{\sigma_n})} \\
 &= \lim_{n\to\infty} \|\nabla( u_n- (\sigma_n /\sigma)^{N/2^*} )\|^2_{L^2(\mm_n)} + \| u_n- (\sigma_n /\sigma)^{N/2^*} \|^2_{L^{2^*}(\mm_n)}.
 \end{aligned}
\]
This yields a contradiction with \eqref{eq:eps distance extremal}.

If $v$ is not constant, by Theorem \ref{thm:Aubin extremals}, there  exist $a\in \R,b\in(0,1),z_0 \in \X_\infty$ so that
\[
    v(x) = \frac{a}{\big(1 - b \cos (\sfd_\infty(x,z_0)) \big)^{\frac{N-2}{2}}},\qquad\forall x\in\X_\infty.
\]
Denoting $f(t) \coloneqq a \big(1 - b \cos (t) \big)^{\frac{2-N}{2}}$ for $t \in [0,\pi]$, it is clear that $u =  \tilde f\circ \rho(\cdot, z_0))$, where   $\tilde f(s)\coloneqq \sigma^{-N/2^*}f(\sigma^{-1}s)$, $s \in \R$. Take now a sequence $z_n \to z_0$ GH-converging and invoke Lemma \ref{lem:recovery extremals} (here \eqref{eq:growth assumption} is trivially satisfied by equi-boundedness of the diameters) to get that $\tilde f\circ \sfd_{\sigma_n}(\cdot , z_n)$ converges $L^{2^*}$-strong to $u$ and
\begin{equation}
    \lim_n \| u_{\sigma_n} -\tilde f\circ \sfd_{\sigma_n}(\cdot , z_n) \|_{L^{2^*}(\mm_{\sigma_n})}  =0. \label{eq:2* close}
\end{equation}
We want to scale back this information to the original sequence $u_n$. Simple estimates and triangular inequalities give 
\[
\begin{aligned}
     \limsup_{n\to\infty}& \|  u_n - f\circ \sfd_n (\cdot , z_n)\|_{L^{2^*}(\mm_n)} =  \limsup_{n\to\infty}  \| \sigma_n^{-N/2^*} \big( u_n - f\circ  \sfd_n (\cdot , z_n) \big)\|_{L^{2^*}(\mm_{\sigma_n})} \\
     &\le   \limsup_{n\to\infty} \| u_{\sigma_n} - \sigma^{-N/2^*}f\circ \sfd_n (\cdot , z_n) \|_{L^{2^*}(\mm_{\sigma_n})} 
      + |\sigma^{-N/2^*}- \sigma_n^{-N/2^*} | \ \| f\circ  \sfd_n (\cdot , z_n)\|_{L^{2^*}(\mm_{\sigma_n})}  \\
 & \le \limsup_{n\to\infty} \| u_{\sigma_n} -\tilde f \circ\sfd_{\sigma_n}(\cdot , z_n) )\|_{L^{2^*}(\mm_{\sigma_n})}  +C_{f,\sigma}\limsup_{n\to\infty} |\sigma^{-1} - \sigma_n^{-1}| \overset{\eqref{eq:2* close}}{=}0,
\end{aligned}
\]
using that $\tilde f$ is bounded and  that $\sigma_n$ is away from zero.
We pass now to the gradient norm. From the chain rule of weak gradients and the fact that $|\nabla \rho(\cdot,z_0)|=1$ $\mu$-a.e., we have $|\nabla u| = |\tilde f'|\circ \rho(\cdot,z_0)$  $\mu$-a.e.\ and similarly  $|\nabla (\tilde f\circ \sfd_{\sigma_n}(\cdot,z_n))|=|\tilde f'|\circ \sfd_{\sigma_n}(\cdot,z_n)$ at $\mm_{\sigma_n}$-a.e.\ point. In particular again by Lemma \ref{lem:recovery extremals} we have that $|\tilde f'|\circ \sfd_{\sigma_n}(\cdot,z_n)$ converges $L^2$-strong to $|\nabla u|$. This means that the convergence of $\tilde f\circ \sfd_{\sigma_n}(\cdot,z_0)$ to $u$ is $W^{1,2}$-strong. Moreover, as we said above, also $u_{\sigma_n}$ $W^{1,2}$-strong converges to $u$. This together with Lemma \ref{lem:coupling nablas} and \eqref{eq:cosine rule} gives 
\begin{equation}
    \lim_{n\to\infty} \|\nabla \big(u_{\sigma_n}- \tilde f\circ\sfd_{\sigma_n}(\cdot,z_n) \big) \|^2_{L^2(\mm_{\sigma_n})} =0.\label{eq:gradient contradiction}
\end{equation}
Arguing as above for the $2^*$-norm we can scale back the above information to obtain
\[
\lim_{n\to\infty} \| \nabla\big(  u_n - f\circ \sfd_n (\cdot , z_n)\big)\|_{L^{2}(\mm_n)}=0.
\]
 We omit the computation since it is analogous.
Since $ f\circ \sfd_n (\cdot , z_n)\in \mathcal{M}_{{\sf sphere}}(\X_n)$, we again reached a contradiction with \eqref{eq:eps distance extremal}.

\noindent{\color{blue} \textsc{Case 2}: }$\sigma = \infty$. Here $B_n \to B \coloneqq 0$ and we know that $(Y,\rho,\mu,\bar y)$ supports a Sobolev inequality \eqref{eq:convention} with constants $A=(2^*-2)/N,B=0$. In particular, ${\sf AVR}(Y)>0$ thanks to \cite[Theorem 4.6]{NobiliViolo21} and $\sqrt{A}\ge   \eucl(N,2){\sf AVR}(Y)^{-1/N}$ by sharpness in \eqref{eq:sharp sobolev growth}. The sequence $u_{\sigma_n}$ (that we recall is $L^{2^*}$-strong converging to some $u \in L^{2^*}(\mu)$) is so that  $\| \nabla u_{\sigma_n}\|_{L^2( \mm_{\sigma_n})} \to \| \nabla u\|_{L^2(\mu)}$, hence
\[ \eucl(N,2){\sf AVR}(Y)^{-1/N} \|\nabla u\|_{L^2(\mu)}\ge  \| u\|_{L^{2^*}(\mu)} =  \sqrt{A} \|\nabla u\|_{L^2(\mu)}.\]
Therefore $\sqrt A = \eucl(N,2) {\sf AVR}(Y)^{-1/N}$ (recall that $u$ is non-zero) and in particular ${\sf AVR}(Y)$ depends only on $N$. Recalling the rigidity in Theorem \ref{thm:rigidity sharp Sob} we get that $Y$ is isomorphic to an $N$-Euclidean metric measure cone with tip $z_0$ and $u$ is radial of the following form
\[
u(y) = \frac{a}{\big(1+b\rho^2(y,z_0)\big)^{\frac{N-2}2}},\qquad y \in Y,
\]
for some $a \in \R,b>0$. 

Pick now a sequence $z_n\in Y_n$ with $z_n \to z_0$ in $\Z$.  Note that, since $z_n \to z_0$ and $z_0$ is a tip of $Y$, by pmGH convergence we have
\begin{equation*}
 \lim_n  \sigma_n^{N}\mm_{_n}(B_{1/\sigma_n}(z_n))=\lim_n \mm_{\sigma_n}(B_1(z_n))= \mu (B_1(z_0))=\avr(Y)\omega_N.
\end{equation*}
Hence up to a subsequence, since $\avr(Y)$ depends only on $N$, for every $n$ it holds
\begin{equation}\label{eq:density zn}
   \mm_{\sigma_n}(B_1(z_n))= \mm_{n}(B_{\sigma_n^{-1}}(z_n))\sigma_n^{N}\le  C_N.
\end{equation}
Denote 
$$f(t) \coloneqq \frac {a}{(1+bt^2)^{\frac{N-2}{2}}}, \qquad t \ge0.$$  
Note that $|f|^{2^*}, |f'|^2\le Ct^{-2N+2}$ and for every $R\ge 1,$
\begin{equation}
\int_{B_R(z_n)^c}\sfd_{\sigma_n}(\cdot, z_n)^{-2N+2} \, \d \mm_{\sigma_n} \overset{ \eqref{eq:integral distance}}{ \le} C_N \mm_{\sigma_n}(B_1(z_n)) R^{-N+2}\overset{ \eqref{eq:density zn}}{ \le} C_N R^{-N+2}.
\label{eq:growth Talenti}
\end{equation}
Hence assumption \eqref{eq:growth assumption} in Lemma \ref{lem:recovery extremals} is satisfied for $Y_n$ and both $f',f$  and we can apply the result twice to get that $f\circ \sfd_{\sigma_n}(\cdot,z_n)$ converges $L^{2^*}$-strong to $u$, that
\begin{equation}
   \lim_{n\to\infty}\| u_{\sigma_n} - f\circ \sfd_{\sigma_n}(\cdot, z_n)\|_{L^{2^*}(\mm_{\sigma_n})} \overset{\eqref{eq:classic l2 conv}}{=} 0,  \label{eq:un 2* close feu}
\end{equation}
and that $|f'|\circ\sfd_{\sigma_n}(\cdot,z_n) \text{ converges }L^2\text{-strong to }|\nabla u|=|f'|\circ \rho(\cdot,z_0)$. By Lemma \ref{lem:coupling nablas} and the convergence of the gradient norms, we immediately get from the parallelogram identity 
\begin{equation}
   \lim_{n\to\infty}\int | \nabla (u_{\sigma_n} - f\circ \sfd_{\sigma_n}(\cdot,z_n))|^2\,\d\mm_{\sigma_n} = 0. \label{eq:un gradient close feu}
\end{equation}
Scaling all back to $\X_n$ we can rewrite the above convergences as
\begin{equation*}
     \lim_{n\to\infty} \| u_n - f_n\circ \sfd_{n}(\cdot, z_n)\|_{L^{2^*}(\mm_{n})}+\| \nabla (u_n - f_n\circ \sfd_{n}(\cdot, z_n))\|_{L^{2^*}(\mm_{n})}=0,
\end{equation*}
where $$f_n\coloneqq \frac{ ab^{\frac{2-N}{4}} (\sqrt b\sigma_n)^{\frac{N-2}{2}}}{(1+(\sqrt b\sigma_n)^2 t^2)^{\frac{N-2}{2}}}.$$
 Using \eqref{eq:density zn}  we deduce 
$$\mm_{n}(B_{(\sqrt b \sigma_n)^{-1}}(z_n))(\sqrt b \sigma_n)^{N}\le C_N (\sqrt b \vee 1)^N.$$
This is obvious if $b\ge 1$, while for $b\le 1$  it follows by the Bishop-Gromov inequality. Having this density bound, we can  now apply Lemma \ref{lem:stability when concentrating} to get
\begin{equation*}
     \lim_{n\to\infty} \| u_n - g_n\circ \sfd_{n}(\cdot, z_n)\|_{L^{2^*}(\mm_{n})}+\| \nabla (u_n - g_n\circ \sfd_{n}(\cdot, z_n))\|_{L^{2^*}(\mm_{n})}=0,
\end{equation*}
where $$g_n\coloneqq \frac{ ab^{\frac{2-N}{4}}(\sqrt b\sigma_n)^{\frac{N-2}{2}}}{(1+(\sqrt b\sigma_n)^2-(\sqrt b\sigma_n)^2\cos(t))^{\frac{N-2}{2}}}.$$ 
Multiplying and dividing by $1+(\sqrt b\sigma_n)^2$ shows that $g_n\circ \sfd_{n}(\cdot, z_n) \in \mathcal{M}_{{\sf sphere}}(\X_n)$ and gives a contradiction with \eqref{eq:eps distance extremal}. Having examined all the possible cases, the proof is now concluded.
\end{proof}
\begin{remark}\label{rmk:weaker assumptino}
\rm
It is evident from the proof that \eqref{eq:close to bubble rcd} holds true assuming only that 
\[
\| u \|_{L^{2^*}(\mm)}^2 \ge  A\|\nabla u\|^2_{L^2(\mm)}+B\|u\|^2_{L^2(\mm)},
\]
with $|A- \frac{2^*-2}{N}|+|B-1| <  \delta,$ which is a weaker assumption than \eqref{eq:almost extremal comact rcd}.  Indeed,  the starting point of the argument is the reverse Sobolev inequality \eqref{eq:reverse Sobolev un} (for $u_n$) and adding here a sequence $B_n\to 1$ in front of $\|u_n\|^2_{L^2(\mm_n)}$ does not influence the subsequent steps.  
\fr
\end{remark}

\begin{proposition}\label{prop:reminder cosine}
Let $\Xdm$ be an $\RCD(N-1,N)$ space, $N>2$, with $\mm(\X)=1$ and set $2^*=2N/(N-2)$. Let $u \in W^{1,2}(\X)$ be non-constant and set 
 \[   \delta\coloneqq \frac{2^*-2}{N}-\frac{ \|u\|_{L^{2^*}(\mm)}^2 - \|u\|^2_{L^2(\mm)} }{\|\nabla u\|^2_{L^2(\mm)} } \ge 0.\]
Then setting $g\coloneqq {u-\int u}$ we have for some $x \in \X$
\begin{equation}\label{eq:linearization}
   \big\| g\|g\|_{L^2(\mm)}^{-1} - \sqrt{N+1}\cos(\sfd(\cdot,x))\big\|_{L^2(\mm)}\le C_N ((\|\nabla u\|_{L^{2}(\mm)}\|u\|_{L^{2^*}(\mm)}^{-1})^\alpha+\delta)^{\beta},
\end{equation}
for some positive constants $\alpha,\beta$ depending only on $N$.
\end{proposition}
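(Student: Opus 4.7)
The strategy is to reduce the claim, via a careful Taylor expansion of the Sobolev functional around a constant, to a quantitative stability of the Lichnerowicz--Obata spectral gap on $\RCD(N-1,N)$ spaces. Normalize $\|u\|_{L^{2^*}(\mm)} = 1$ and set $\eps \coloneqq \|\nabla u\|_{L^2(\mm)}$; we may assume that both $\eps$ and $\delta$ are smaller than any fixed constant $\eps_0(N) > 0$, since otherwise the conclusion holds trivially by enlarging $C_N$ (the left-hand side is bounded by $1 + \sqrt{N+1}$). Decompose $u = \bar u + g$ with $\bar u \coloneqq \int u \, \d\mm$ and $\int g\, \d\mm = 0$. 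Combining the Lichnerowicz inequality $\|g\|_{L^2(\mm)}^2 \le \eps^2/N$ with the upper bound $\|u\|_{L^2(\mm)}^2 \le 1 - (A-\delta)\eps^2$ (the hypothesis, with $A = (2^*-2)/N$) and the lower bound $\|u\|_{L^2(\mm)}^2 \ge 1 - A\eps^2$ (from the sharp Sobolev inequality \eqref{eq:critical sobolev}), one obtains $\bar u^2 \ge 1 - C_N \eps^2$. Without loss of generality I assume $\bar u > 0$.

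Next I would Taylor-expand $\|u\|_{L^{2^*}(\mm)}^2$ to second order around the constant $\bar u$. Applying the Sobolev inequality to $g$ itself yields $\|g\|_{L^{2^*}(\mm)} \le C_N \eps$, and combining this with the pointwise bound
\[\Bigl||\bar u + g|^{2^*} - \bar u^{2^*} - 2^* \bar u^{2^*-1} g - \binom{2^*}{2}\bar u^{2^*-2} g^2\Bigr| \le C_N\bigl(|g|^3 \wedge |g|^{2^*}\bigr)\]
together with $\int g\, \d\mm = 0$, one derives
\[\|u\|_{L^{2^*}(\mm)}^2 = \bar u^2 + (2^*-1)\|g\|_{L^2(\mm)}^2 + E, \qquad |E| \le C_N \eps^{2+\alpha_0},\]
for some $\alpha_0 = \alpha_0(N) > 0$. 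Inserting this into the hypothesis $\|u\|_{L^{2^*}}^2 - \|u\|_{L^2}^2 \ge (A-\delta)\eps^2$ and using the identity $(2^*-2)/N = A$, one extracts the \emph{almost-Lichnerowicz inequality}
\[\|\nabla g\|_{L^2(\mm)}^2 - N\|g\|_{L^2(\mm)}^2 \le C_N\bigl(\delta + \eps^{\alpha_0}\bigr)\,\|\nabla g\|_{L^2(\mm)}^2.\]

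To conclude I would invoke a quantitative stability of the Lichnerowicz rigidity on $\RCD(N-1,N)$ spaces with $\mm(\X) = 1$: if $g \in W^{1,2}(\X)$ satisfies $\int g\,\d\mm = 0$ and $\|\nabla g\|_{L^2}^2 \le (N+\eta)\|g\|_{L^2}^2$ with $\eta$ small, then there exists $x \in \X$ with
\[\bigl\|g/\|g\|_{L^2} - \sqrt{N+1}\cos(\sfd(\cdot, x))\bigr\|_{L^2(\mm)} \le C_N \eta^\beta\]
for some $\beta = \beta(N) > 0$. The qualitative version is standard via pmGH-compactness (Theorem \ref{thm:pmGHstableRCD}), Mosco convergence of the Cheeger energies, and the Ketterer rigidity theorem (which forces the pmGH-limit to be a spherical suspension with first eigenfunctions being cosines of the distance); a polynomial rate can then be extracted from finer spectral-gap stability arguments developed in \cite{NobiliViolo21}. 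Applied with $\eta = C_N(\delta + \eps^{\alpha_0})$, this yields the desired bound.

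The main obstacle is the \emph{polynomial-rate} stability of the Lichnerowicz rigidity in the last step: the qualitative modulus follows routinely from compactness plus Ketterer's theorem, but a polynomial rate on general $\RCD(N-1,N)$ spaces is delicate because the first eigenspace need not consist of cosines unless the space is already a spherical suspension. A secondary technical issue arises in the Taylor remainder when $N > 6$ (so $2^* < 3$), where the standard cubic expansion breaks down and one must work with the sharper $|g|^{2^*}$-bound combined with $\|g\|_{L^{2^*}} \le C_N \eps$.
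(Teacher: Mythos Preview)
Your proposal is essentially correct and follows the same two-step strategy as the paper: first reduce, via a second-order expansion of the Sobolev functional around the constant $\bar u$, to an almost-Lichnerowicz inequality for $g=u-\int u$; then conclude by a quantitative Obata-type stability. The Taylor-expansion you spell out (including the case distinction $N\le 6$ versus $N>6$) is precisely the content of \cite[Lemma 6.7]{NobiliViolo21}, which the paper simply cites rather than reproving.

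The only point where you differ from the paper is the last step, which you flag as the main obstacle. In fact no new work is needed here: the polynomial-rate stability of the Lichnerowicz rigidity on $\RCD(N-1,N)$ spaces is exactly the \emph{quantitative Obata theorem} of Cavalletti--Mondino--Semola \cite{CavalettiMondinoSemola19}, which the paper invokes as a black box (extended from Lipschitz to $W^{1,2}$ functions by density). So your worry is already resolved in the literature; you were simply looking for the result in the wrong reference.
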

\begin{proof}
We can clearly assume that $\int u=1$. Moreover we can assume that $\|\nabla u\|_{L^{2}(\mm)}\le  \eps_N\|u\|_{L^{2^*}(\mm)} $ for some small constant $\eps_N>0$, otherwise the statement is trivial. Analogously we can assume that $\delta$ is small with respect to $N.$  By the Sobolev and  the Poincar\'e inequalities, provided $\eps_N$ is small enough,  we have $\|u\|_{L^{2^*}}\le 2$.  Set $g\coloneqq u-\int u$. Then by \cite[Lemma 6.7]{NobiliViolo21} and the Poincar\'e inequality we have, provided $\delta$ and $\eps_N$ are small enough,
\[
\left|  N- \frac{\int |\nabla g|^2 \d \mm} {\int g^2\d \mm }\right|\le C_N (\|\nabla u\|_{L^2(\mm)}^\alpha+\delta)\le \tilde C_N ((\|\nabla u\|_{L^2(\mm)}\|u\|_{L^{2^*}(\mm)}^{-1})^\alpha+\delta),
\]
for some $\alpha>0$ depending only on $N.$
Now \eqref{eq:linearization} follows directly from the  quantitative Obata theorem in \cite{CavalettiMondinoSemola19} (there, written for Lipschitz functions but by density in $W^{1,2}$, the statement directly extend to Sobolev functions recalling \eqref{eq:wug is lip}).
\end{proof}
We conclude this part with the proof of the stability result for the Yamabe minimizers in the smooth setting.
\begin{proof}[Proof of Corollary \ref{cor:yamabe}]
Take as in the hypotheses $(M,g)$ so that ${\rm Ric}_g \ge n-1$ and $\sfd_{GH}(M,{\mathbb{S}^n})\le \delta$. Let $u \in W^{1,2}(M)$ non-zero satisfying $|\mathcal{E}(u)-Y(M,g)|\le \delta$.
Set $\nu$ the renormalized volume measure. Since ${\rm Scal}_g\ge n(n-1)$, we have by the Sobolev inequality \eqref{eq:Sob main intro} that
\[
  1\le \frac{\frac{2^*-2}{n}  \|\nabla u\|_{L^{2}(\nu)}^{2}  + \|u\|_{L^{2}(\nu)}^{2} }{\|u\|_{L^{2^*}(\nu)}^{2}}\le  \frac{\mathcal{E}(u)}{n(n-1)\vol_g(M)^{2/n}}\le  \frac{(Y(M,g)+\delta)}{n(n-1)\vol_g(M)^{2/n}},
\]
where the norms are computed using the renormalized volume measure. Recall also that by \cite{Co96} we have that $\vol_g(M)\ge (1-\eps')\vol({\mathbb{S}^n})$, where $\eps'=\eps'(\delta,n)$ goes to zero as $\delta \to 0.$ This in particular gives that $Y(M,g)\ge c(n)>0$ if $\delta$ is chosen small enough (depending on $n$). Therefore, combining the above with the  inequality  (see \cite{Aubin76-2})
$$Y(M,g)\le Y({\mathbb{S}^n})=n(n-1)\vol({\mathbb{S}^n})^{2/n}$$
gives
\begin{align*}
    \frac{\frac{2^*-2}{n}  \|\nabla u\|_{L^{2}(\nu)}^{2}  + \|u\|_{L^{2}(\nu)}^{2} }{\|u\|_{L^{2^*}(\nu)}^{2}}\le  (1+\delta c(n)^{-1})(1-\eps')^{-2/n}.
\end{align*}
The conclusion now follows applying Theorem \ref{thm:qualitative SobCompact intro} (in the stronger version given by \eqref{eq:stronger}).
\end{proof}

\subsection{Stability in the non-compact case}
We now prove the qualitative stability result for the sharp Euclidean-type Sobolev inequality. Note that this proves also Theorem \ref{thm:qualitative SobAVR intro}. Given $N>2$, the family of Euclidean bubbles in a metric space $(\X,\sfd)$ is denoted by
\[
\mathcal{M}_{\sf eu}(\X) \coloneqq \{  a(1+ b \sfd^2(x,z_0))^{\frac{2-N}{2}} \colon  a\in\R, b>0,z_0 \in \X \}.
\]
\begin{theorem}\label{thm:qualitative SobAVR RCD}
For every $\eps>0,V\in(0,1)$ and $N\in(2,\infty),$ there exists $\delta\coloneqq \delta(\eps,N,V)>0$ such that the following holds.
Let  $\Xdm$  be an $\RCD(0,N)$ space with ${\sf AVR}(\X) \in( V,V^{-1})$ and, setting $2^*=2N/(N-2)$, assume there exists $u\in W^{1,2}_{loc}(\X)\cap L^{2^*}(\mm)$ non constant with $\mm(|u|>t)<\infty$ for every $t>0$  satisfying
\[
  \frac{\| u\|_{L^{2^*}(\mm)}}{\| \nabla u\|_{L^2(\mm)}} > {\sf AVR}(\X)^{-\frac 1N}\eucl(N,2)- \delta.
\]
Then, there exists $v \in \mathcal{M}_{\sf eu}(\X)$ so that
\[
    \frac{\| \nabla( u - v)\|_{L^2(\mm)}}{\|\nabla u\|_{L^2(\mm)}} \le \eps.
\] 
\end{theorem}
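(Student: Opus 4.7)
The strategy mirrors the compact case (Theorem \ref{thm:qualitative Optimal Sobolev RCD}) but is simpler because no dichotomy between a ``compact'' and a ``cone'' limit arises. Arguing by contradiction, I would fix $\eps>0,V\in(0,1)$ and produce a sequence of $\RCD(0,N)$ spaces $(\X_n,\sfd_n,\mm_n)$ with ${\sf AVR}(\X_n)\in(V,V^{-1})$ and non-zero $u_n\in W^{1,2}_{loc}(\X_n)\cap L^{2^*}(\mm_n)$ with $\|u_n\|_{L^{2^*}(\mm_n)}=1$ such that
\[
\|u_n\|_{L^{2^*}(\mm_n)}^2\ge \tilde A_n\|\nabla u_n\|_{L^2(\mm_n)}^2,\qquad \tilde A_n\to A\coloneqq\eucl(N,2)^2 V_\infty^{-2/N},
\]
where $V_\infty\coloneqq\lim_n{\sf AVR}(\X_n)\in[V,V^{-1}]$ along a subsequence, while $\inf_{v\in\mathcal{M}_{\sf eu}(\X_n)}\|\nabla(u_n-v)\|_{L^2(\mm_n)}>\eps$. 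Choose a concentration radius $t_n>0$ and a point $y_n\in\X_n$ realising $\int_{B_{t_n}(y_n)}|u_n|^{2^*}\,\d\mm_n=\sup_{y}\int_{B_{t_n}(y)}|u_n|^{2^*}\,\d\mm_n=1-\eta$ for some fixed $\eta<\eta_N$ (as in Theorem \ref{thm:CC_Sobextremals}), then rescale by $\sigma_n\coloneqq t_n^{-1}$ to obtain pointed spaces $(Y_n,\rho_n,\mu_n,y_n)=(\X_n,\sigma_n\sfd_n,\sigma_n^N\mm_n,y_n)$ and functions $u_{\sigma_n}\coloneqq\sigma_n^{-N/2^*}u_n$. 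Since ${\sf AVR}$ and the Euclidean-type Sobolev inequality \eqref{eq:sharp sobolev growth} are scale invariant, each $(Y_n,\rho_n,\mu_n)$ supports a Sobolev inequality in the sense \eqref{eq:convention} with constants $A_n=\eucl(N,2)^2{\sf AVR}(\X_n)^{-2/N}\to A$ and $B_n\equiv 0$.

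Applying Theorem \ref{thm:CC_Sobextremals} yields, up to a subsequence, pmGH-convergence $Y_n\to(Y,\rho,\mu,\bar y)$, an $\RCD(0,N)$-limit with a Sobolev inequality of constants $A,0$, together with $L^{2^*}$-strong convergence $u_{\sigma_n}\to u\in W^{1,2}_{loc}(Y)$ such that $\|\nabla u_{\sigma_n}\|_{L^2(\mu_n)}\to\|\nabla u\|_{L^2(\mu)}$ and $\|u\|_{L^{2^*}(\mu)}^2=A\|\nabla u\|_{L^2(\mu)}^2$. Combining this identity with the sharp inequality on $Y$ (which forces $\sqrt A\le\eucl(N,2){\sf AVR}(Y)^{-1/N}$) and with the fact that ${\sf AVR}(Y)\ge V_\infty$ (from \cite[Thm 4.6]{NobiliViolo21} and $\sqrt A\ge\eucl(N,2){\sf AVR}(Y)^{-1/N}$), one concludes ${\sf AVR}(Y)=V_\infty$ and that $u$ achieves equality in \eqref{eq:sharp sobolev growth}. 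By the rigidity Theorem \ref{thm:rigidity sharp Sob}, $Y$ is isomorphic to an $N$-Euclidean metric measure cone with tip $z_0$ and
\[
u(y)=f\circ\rho(y,z_0),\qquad f(t)\coloneqq a(1+bt^2)^{\frac{2-N}{2}}
\]
for some $a\in\R,\,b>0$.

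To transfer back to the sequence, pick $z_n\in Y_n$ converging to $z_0$ in a realisation $(\Z,\sfd)$ of the convergence. The density bound $\mu_n(B_1(z_n))\to\mu(B_1(z_0))=V_\infty\omega_N$ together with \eqref{eq:integral distance} shows that \eqref{eq:growth assumption} holds for both $f$ and $f'$ with $\alpha=2N-2$. Thus Lemma \ref{lem:recovery extremals} gives $f\circ\rho_n(\cdot,z_n)\to u$ and $|f'|\circ\rho_n(\cdot,z_n)\to|\nabla u|$ in $L^{2^*}$-strong and $L^2$-strong respectively, hence the convergence $f\circ\rho_n(\cdot,z_n)\to u$ is $W^{1,2}$-strong. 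Since $u_{\sigma_n}\to u$ is also $W^{1,2}$-strong (from gradient-norm convergence), Lemma \ref{lem:coupling nablas} plus the parallelogram identity \eqref{eq:cosine rule} yields
\[
\lim_{n\to\infty}\bigl\|\nabla\bigl(u_{\sigma_n}-f\circ\rho_n(\cdot,z_n)\bigr)\bigr\|_{L^2(\mu_n)}=0.
\]
Undoing the rescaling, the function $v_n(x)\coloneqq\sigma_n^{N/2^*}f(\sigma_n\sfd_n(x,z_n))=a\sigma_n^{N/2^*}(1+b\sigma_n^2\sfd_n(x,z_n)^2)^{\frac{2-N}{2}}$ belongs to $\mathcal{M}_{\sf eu}(\X_n)$ and, by scale-invariance of the gradient-$L^2$ norm (under $\sfd\mapsto\sigma\sfd,\ \mm\mapsto\sigma^N\mm,\ u\mapsto\sigma^{-N/2^*}u$), one obtains $\|\nabla(u_n-v_n)\|_{L^2(\mm_n)}\to 0$, contradicting the standing hypothesis.

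\textbf{Main obstacle.} The delicate point is matching the asymptotic volume ratio of the pmGH-limit with the asymptotic constant of the Sobolev inequality: a priori $Y$ could have a strictly larger ${\sf AVR}$, which would prevent $u$ from being extremal in the sharp inequality on $Y$ and thereby block the use of Theorem \ref{thm:rigidity sharp Sob}. The two-sided bound ${\sf AVR}(\X_n)\in(V,V^{-1})$ and the inequality $\sqrt A\ge\eucl(N,2){\sf AVR}(Y)^{-1/N}$ coming from the extremality of $u$ are exactly what is needed to pin ${\sf AVR}(Y)=V_\infty$. All remaining steps are routine scaling and the now-standard recovery lemmas for radial functions along pmGH-convergent sequences.
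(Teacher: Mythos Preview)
Your proposal is correct and follows essentially the same route as the paper: contradiction, L\'evy-type rescaling, Theorem~\ref{thm:CC_Sobextremals}, rigidity via Theorem~\ref{thm:rigidity sharp Sob}, and transfer back via Lemmas~\ref{lem:recovery extremals} and~\ref{lem:coupling nablas}. The only omission is the preliminary reduction from $u\in W^{1,2}_{loc}$ to $u\in W^{1,2}$ via Lemma~\ref{lem:el trick}, which is needed because Theorem~\ref{thm:CC_Sobextremals} assumes $u_n\in W^{1,2}(Y_n)$; also, in the $B=0$ regime the convergence $u_{\sigma_n}\to u$ is only $L^{2^*}$-strong with gradient-norm convergence (not literally $W^{1,2}$-strong, since $u$ need not lie in $L^2$), but this is precisely the hypothesis of Lemma~\ref{lem:coupling nablas}, so your use of the parallelogram identity goes through unchanged.
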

\begin{proof} We can clearly assume that $\| u\|_{L^{2^*}(\mm)}=1$. Moreover by approximation it is also sufficient to prove the statement for $u \in W^{1,2}(\X)$ (see  Lemma \ref{lem:el trick}).

We proceed by contradiction and suppose that there exist $\eps>0$, a sequence $(\X_n,\sfd_n,\mm_n)$ of $\RCD(0,N)$ spaces with ${\sf AVR}(\X_n) \in (V,V^{-1})$ and a sequence  $u_n\in W^{1,2}(\X_n)\cap L^{2^*}(\mm_n)$ of non-constant functions satisfying
\begin{equation}
    \| u_n\|_{L^{2^*}(\mm_n)} \ge (A_n-1/n) \| \nabla u_n\|_{L^2(\mm_n)}, \label{eq:AVR sobolev proof}
\end{equation}
where $A_n\coloneqq  {\sf AVR}(\X_n)^{-\frac 1N}\eucl(N,2)$, and 
\begin{equation}
        \inf_{v\in\mathcal{M}_{\sf eu}(\X_n)} \frac{\| \nabla ( u_n  -v  )\|_{L^2(\mm_n)}}{\|\nabla u_n\|_{L^2(\mm_n)}} > \eps,\qquad \forall n \in \N.\label{eq:contradiction AVR}
\end{equation}
For every $\eta \in (0,1)$, let  $y_n \in \X_n$ and $t_n>0$ so that (arguing as for \eqref{eq:yn maximizing})
\[ 
1-\eta = \int_{B_{t_n}(y_n)} |u_n|^{2^*}\, \d \mm_n =  \sup_{y \in \X_n} \int_{B_{t_n}(y)} |u_n|^{2^*}\, \d \mm_n  ,\qquad n \in \N.
\]
Define now $\sigma_n \coloneqq  t_n^{-1}$ and  $(Y_n,\rho_n,\mu_n,y_n) \coloneqq  (\X_{\sigma_n}, \sfd_{\sigma_n},\mm_{\sigma_n},y_n)$, where $\sfd_{\sigma_n}\coloneqq \sigma_n \sfd_n$, $\mm_{\sigma_n}\coloneqq \sigma_n^N \mm_n$ and $u_{\sigma_n}\coloneqq \sigma_n^{-N/2^*}u_n \in W^{1,2}(Y_n)$. In particular, by scaling, for every $n\in\N$ we have
\[
1-\eta =  \int_{B_1(y_n)} |u_{\sigma_n}|^{2^*}\, \d \mu_n\qquad \text{and}\qquad  \| u_{\sigma_n}\|_{L^{2^*}(\mu_n)}  \ge (A_n-1/n) \| \nabla u_{\sigma_n}\|_{L^2(\mu_n)}.
    \]
By the assumption, we have the uniform bounds $2 V^{\frac 1N}\eucl(N,2) \le A_n  \le 2 V^{-\frac 1N}\eucl(N,2)$. Thus, up to subsequences, we can clearly suppose that $A_n\to A$, for some $A>0$ finite. We can now invoke Theorem \ref{thm:CC_Sobextremals} (the assumptions are satisfied as $\diam(Y_n)=+\infty$) with $\eta \coloneqq \eta_N/2$ and get that up to a subsequence   $(Y_n,\rho_n,\mu_n,y_n)$ pmGH-converges to some $\RCD(0,N)$ space $(Y,\rho,\mu,\bar y)$ 
supporting a Sobolev inequality \eqref{eq:convention} with constant $A>0,B=0$. Moreover we have $L^{2^*}$-strong convergence of $u_{\sigma_n}$ to a function $u \in W^{1,2}_{loc}(Y)$ attaining equality in this said Sobolev inequality and $\|\nabla u_{\sigma_n}\|_{L^2(\mm_{\sigma_n})}\to \|\nabla u\|_{L^2(\mu)}$. From \cite[Theorem 4.6]{NobiliViolo21} we have ${\sf AVR}(Y)=(\eucl(N,2)/A)^N $ and in particular $u$ satisfies the assumptions of Theorem \ref{thm:rigidity sharp Sob}, which gives that $Y$ is isomorphic to a $N$-Euclidean metric measure cone with tip  $z_0$ and
\[
u(y) = \frac{a}{(1+b\rho^2(y,z_0))^{\frac{N-2}{2}}}, \qquad y \in Y,
\]
for suitable $a \in \R, b>0$.

Take any $z_n \to z_0$. Then up to subsequence we can assume that $\mm_{\sigma_n}(B_1(z_n))\le C_N {\sf AVR}(Y)$ hold for every $n$. Writing $f(t) \coloneqq a(1+bt^2)^{\frac{2-N}{2}}$ for every $t \in \R^+$, recalling $|f|^{2^*}, |f'|^2\le C t^{-2N+2}$ and arguing as for \eqref{eq:growth Talenti}, we see that all the hypotheses of Lemma \ref{lem:recovery extremals}  are fulfilled both for $f\circ \rho(\cdot,z_0)$ and for $f'\circ \rho(\cdot,z_0)$. We therefore apply Lemma \ref{lem:recovery extremals} twice to get that  $f\circ \sfd_{\sigma_n}(\cdot,z_n)$ converges $L^{2^*}$-strong to $u$ 
and that $|f'|\circ\sfd_{\sigma_n}(\cdot,z_n) $ converges $L^2$-strong to $|\nabla u|$.
We can thus combine Lemma \ref{lem:coupling nablas} with the convergence of the gradient norms to deduce, from the parallelogram identity, that
\begin{equation}
   \lim_{n\to\infty} \| \nabla\big( u_{\sigma_n} - f\circ \sfd_{\sigma_n}(\cdot, z_n)\big)\|_{L^{2}(\mm_{\sigma_n})} = 0. \label{eq:un AVR gradient recovery}
\end{equation}
Scaling back, \eqref{eq:un AVR gradient recovery} becomes
\[
   \lim_{n\to\infty} \| \nabla\big( u_n - (\sigma_n^{N/2^*}f)\circ (\sigma_n\sfd_n(\cdot, z_n) ) \big)\|_{L^{2}(\mm_n)} = 0.
\]
This means that the sequence  $v_n \coloneqq   a\sigma_n^{N/2^*} (1+  b\sigma_n^2\sfd_n(\cdot,z_n)^2)^{\frac{2-N}{2}} \in \mathcal{M}_{\sf eu}(\X_n)$, satisfies
\[
   \limsup_{n\to\infty} \frac{\| \nabla ( u_n - v_n )\|_{L^2(\mm_n)}}{\|\nabla u_n\|_{L^2(\mm_n)}} =0,
\] 
having used that $\|\nabla u_n\|_{L^2(\mm_n)}\ge C_N\avr(\X_n)^{1/N} \|u_n\|_{L^{2^*}}\ge C_N V^{1/N}.$ This is a contradiction with \eqref{eq:contradiction AVR} and concludes the proof.
\end{proof}
From the above stability, the next corollary directly follows (proving also Corollary \ref{cor:introcorollary}).
\begin{corollary}\label{cor:sobconstant}
Let $\Xdm$  be an $\RCD(0,N)$ space with $N\in (2,\infty),{\sf AVR}(\X)>0$. Then
\[
 {\sf AVR}(\X)^{\frac 1N}\eucl^{-1}(N,2)  =\inf_{v \in \mathcal{M}_{\sf eu}(\X)} \frac{\| \nabla  v \|_{L^2(\mm)}}{\|  v \|_{L^{2^*}(\mm)}}.
\]
\end{corollary}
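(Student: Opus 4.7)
The plan is to establish the two inequalities in the claimed equality separately. For the bound $\ge$, I would apply Theorem \ref{thm:sharpSobgrowth} directly to any $v \in \mathcal{M}_{\sf eu}(\X)$: since $v \sim \sfd(\cdot,z_0)^{2-N}$ at infinity and ${\sf AVR}(\X) > 0$, the Bishop-Gromov inequality ensures $v \in L^{2^*}(\mm)$ with $\mm(\{|v|>t\}) < \infty$ for every $t > 0$, and $|\nabla v| \in L^2(\mm)$ by a direct computation. Consequently
\[
\|v\|_{L^{2^*}(\mm)} \le \eucl(N,2){\sf AVR}(\X)^{-1/N}\|\nabla v\|_{L^2(\mm)},
\]
so $\|\nabla v\|_{L^2(\mm)}/\|v\|_{L^{2^*}(\mm)} \ge {\sf AVR}(\X)^{1/N}\eucl^{-1}(N,2)$, and taking the infimum yields the $\ge$ direction.

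For the reverse inequality $\le$, the plan is to exploit the fact that Theorem \ref{thm:sharpSobgrowth} asserts \emph{sharpness} of the constant: there exists a sequence of non-constant $(u_n) \subset W^{1,2}_{loc}(\X) \cap L^{2^*}(\mm)$ with $\mm(\{|u_n|>t\}) < \infty$ for every $t > 0$, normalized so $\|u_n\|_{L^{2^*}(\mm)} = 1$ and with $\|\nabla u_n\|_{L^2(\mm)} \to {\sf AVR}(\X)^{1/N}\eucl^{-1}(N,2)$. Fixing any $V \in (0,1)$ with ${\sf AVR}(\X) \in (V, V^{-1})$ and invoking Theorem \ref{thm:qualitative SobAVR RCD}, for every $\eps>0$ and all $n$ large enough there is some $v_n \in \mathcal{M}_{\sf eu}(\X)$ with
\[
\|\nabla(u_n - v_n)\|_{L^2(\mm)} \le \eps\,\|\nabla u_n\|_{L^2(\mm)}.
\]

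From here the proof closes by a routine perturbation estimate. The triangle inequality gives $\|\nabla v_n\|_{L^2(\mm)} \le (1+\eps)\|\nabla u_n\|_{L^2(\mm)}$, and applying Theorem \ref{thm:sharpSobgrowth} to the difference $u_n - v_n$ together with the reverse triangle inequality yields
\[
\|v_n\|_{L^{2^*}(\mm)} \ge 1 - \eucl(N,2){\sf AVR}(\X)^{-1/N}\|\nabla(u_n - v_n)\|_{L^2(\mm)} \ge 1 - C\eps,
\]
for a constant $C = C(N,V)$, using the uniform upper bound on $\|\nabla u_n\|_{L^2(\mm)}$. Combining these bounds,
\[
\frac{\|\nabla v_n\|_{L^2(\mm)}}{\|v_n\|_{L^{2^*}(\mm)}} \le \frac{(1+\eps)\,\|\nabla u_n\|_{L^2(\mm)}}{1 - C\eps};
\]
letting $n \to \infty$ and then $\eps \to 0$ gives the desired inequality $\le$. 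I do not anticipate any substantial obstacle: the argument simply funnels the sharpness of Theorem \ref{thm:sharpSobgrowth} through the qualitative stability Theorem \ref{thm:qualitative SobAVR RCD} to a concrete sequence of Euclidean bubbles.
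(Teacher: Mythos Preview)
Your proposal is correct and is precisely the natural elaboration of the paper's one-line remark that the corollary ``directly follows'' from Theorem~\ref{thm:qualitative SobAVR RCD}: the $\ge$ inequality is immediate from Theorem~\ref{thm:sharpSobgrowth} applied to bubbles, and the $\le$ inequality comes from feeding an extremizing sequence (whose existence is the sharpness claim in Theorem~\ref{thm:sharpSobgrowth}) into the stability theorem and controlling the resulting bubble by the triangle inequality and the Sobolev inequality applied to the difference. The only cosmetic point to add is that $\eps$ should be taken small enough so that $1-C\eps>0$, but this is implicit in sending $\eps\to 0$ at the end.
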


\appendix

\section{Concentration compactness: non-compact case}

Here we extend the concentration compactness tools for a sequence of converging $\RCD$ spaces (developed in \cite{NobiliViolo21} in compact setting) to the non-compact case. 
The main difference is that here mass can also escape to infinity and so we need an additional result (see Lemma \ref{lem:ConcComp1}).  Some additional technical convergence results will be also needed and proved in Section \ref{sec:technical conv}.

\subsection{Technical convergence lemmas}\label{sec:technical conv}
Throughout this part we fix a sequence $(\X_n,\sfd_n,\mm_n,x_n)$ of pointed $\RCD(K,N)$ spaces, $n \in \N\cup\{\infty\}$, for some $K\in\R, N \in (1,\infty)$ with $\X_n\overset{pmGH}{\to}\X_\infty$. We also fix a proper metric space $(\Z,\sfd)$ realizing the convergence via extrinsic approach \cite{GigliMondinoSavare13} (see Section \ref{sec:conv}). We start with a version of the Brezis-Lieb Lemma \cite{BrezisLieb83}.
\begin{lemma}[Brezis-Lieb type Lemma] \label{lem:BreLieb} 
	 Let $q,q'\in(1,\infty)$  and suppose that $u_n \in L^q(\mm_n)$ satisfy $\sup_n \|u_n\|_{L^q(\mm_n)}<+\infty$ and that $u_n$ converges in $L^{q'}$-strong to some $u_\infty\in L^{q'}\cap L^{q}(\mm_\infty)$. Then, for any sequence $v_n \in L^q(\mm_n)$ such that $v_n \to u_\infty$ strongly both in $L^{q'}$ and $L^{q}$, it holds 
	\begin{equation}\label{eq:brezislieb}
	    \lim_{n\to \infty }\int |u_n|^q\, \d \mm_n-\int |u_n-v_n|^q\, \d \mm_n=\int |u_\infty|^q\,\d \mm_\infty.
	\end{equation}
\end{lemma}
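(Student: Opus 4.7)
The plan is to adapt the classical Brezis--Lieb argument to the present varying-space setting. Since $v_n \to u_\infty$ in $L^q$-strong, $\int |v_n|^q \d\mm_n \to \int |u_\infty|^q \d\mm_\infty$, so \eqref{eq:brezislieb} is equivalent to proving $\lim_n \int [|u_n|^q - |u_n - v_n|^q - |v_n|^q] \d\mm_n = 0$. I would use the elementary inequality: for every $\eps > 0$ there exists $C_\eps > 0$ such that $||a+b|^q - |a|^q - |b|^q| \le \eps |a|^q + C_\eps |b|^q$ for all $a, b \in \R$ (a standard consequence of the mean value theorem and Young's inequality). Applying this pointwise with $a = u_n - v_n$, $b = v_n$, and setting $W_n := ||u_n|^q - |u_n - v_n|^q - |v_n|^q|$ and $g_{\eps, n} := (W_n - \eps |u_n - v_n|^q)^+$, one obtains $W_n \le \eps |u_n - v_n|^q + g_{\eps, n}$ with $g_{\eps, n} \le C_\eps |v_n|^q$. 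Since $\sup_n \|u_n - v_n\|_{L^q(\mm_n)} < \infty$, the problem reduces to showing $\int g_{\eps, n} \d\mm_n \to 0$ for each fixed $\eps > 0$, and then sending $\eps \to 0$.

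To prove $\int g_{\eps, n} \d\mm_n \to 0$, I would split in a realization $(\Z, \sfd)$ of the pmGH-convergence, writing the integral as $\int_{A_n} g_{\eps,n} \d\mm_n + \int_{A_n^c} g_{\eps,n} \d\mm_n$, with parameters $R, M, \delta > 0$ and $A_n := B_R(x_\infty) \cap \{|v_n| \le M\} \cap \{|u_n - v_n| \le \delta\}$. On the complement $A_n^c$, I use the dominant $g_{\eps, n} \le C_\eps |v_n|^q$, combined with three standard tools: tightness of $|v_n|^q \mm_n$ (giving $\int_{B_R^c(x_\infty)} |v_n|^q \d\mm_n$ small for $R$ large), uniform integrability of $\{v_n\}$ in $L^q$ (giving $\int_{\{|v_n| > M\}} |v_n|^q \d\mm_n$ small for $M$ large), and Markov's inequality $\mm_n(\{|u_n - v_n| > \delta\}) \le \delta^{-q'} \|u_n - v_n\|_{L^{q'}(\mm_n)}^{q'} \to 0$, where I use that $u_n - v_n \to 0$ in $L^{q'}$-strong by linearity \eqref{eq:linearity Lpconvergence}. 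On $A_n$ itself the mean value inequality yields $W_n \le q(M + \delta)^{q-1} \delta + \delta^q$ pointwise, so
\[
\int_{A_n} g_{\eps, n} \d\mm_n \le \bigl[q(M+\delta)^{q-1} \delta + \delta^q\bigr] \mm_n(B_R(x_\infty)),
\]
and since $\mm_n(B_R(x_\infty))$ is uniformly bounded in $n$ by Bishop--Gromov plus pmGH-convergence, this is arbitrarily small by choice of $\delta$ (with $R, M$ fixed).

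The main obstacle is the varying-space setting, in which pointwise a.e.\ convergence of $u_n$ to $u_\infty$---the standard ingredient in the classical Brezis--Lieb proof---is unavailable. It is replaced by ``convergence in $\mm_n$-measure'' of $u_n - v_n$ to $0$, extracted via Markov's inequality from the $L^{q'}$-strong convergence, together with uniform $L^q$-integrability of $\{v_n\}$ inherited from $L^q$-strong convergence. The tightness and uniform integrability both need a short justification in the varying-space setting: using the truncation $\phi_M(t) := |t|^q \wedge M^q$, since $\phi_M$ is bounded and continuous one has $\phi_M(v_n) \mm_n \rightharpoonup \phi_M(u_\infty) \mm_\infty$ in $C_{bs}(\Z)$-duality (from $L^q$-strong convergence of $v_n$); combined with the identity $\int_{\{|v_n| > M\}} |v_n|^q \d\mm_n = \|v_n\|_{L^q(\mm_n)}^q - \int \phi_M(v_n) \d\mm_n$ and the norm convergence $\|v_n\|_{L^q(\mm_n)} \to \|u_\infty\|_{L^q(\mm_\infty)}$, this yields $\lim_{M \to \infty} \sup_n \int_{\{|v_n| > M\}} |v_n|^q \d\mm_n = 0$; tightness is analogous, after truncating the spatial test function rather than the target.
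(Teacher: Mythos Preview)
Your proposal is correct and follows the standard Brezis--Lieb strategy adapted to varying spaces, which is essentially what the paper does as well: the paper's proof simply refers to \cite[Prop.~6.2]{NobiliViolo21} (the compact case, same authors) and remarks that the only step requiring finite total measure---an application of H\"older---can be localized by multiplying by $\phi\in C_{bs}(\Z)$, which is exactly the role your restriction to $B_R(x_\infty)$ plays. One minor correction: your identity $\int_{\{|v_n|>M\}}|v_n|^q\,\d\mm_n=\|v_n\|_{L^q}^q-\int\phi_M(v_n)\,\d\mm_n$ is not literally an equality with $\phi_M(t)=|t|^q\wedge M^q$, but the uniform integrability follows cleanly if you instead use $\psi_M(t)\coloneqq(|t|-M)^+$, which is $1$-Lipschitz with $\psi_M(0)=0$ so that $\psi_M(v_n)\to\psi_M(u_\infty)$ in $L^q$-strong, and bound $\int_{\{|v_n|>2M\}}|v_n|^q\,\d\mm_n\le 2^q\|\psi_M(v_n)\|_{L^q}^q$.
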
 
\begin{proof}
The proof is the same as in \cite[Prop. 6.2]{NobiliViolo21}. Even if the argument there is done assuming finite reference measure, it is used only at the end when applying the H\"older inequality. In that step here is enough to multiply by an arbitrary $\phi \in C_{bs}(Z)$ and argue in the same way.
(Note also that the assumptions  $q\in[2,\infty)$ and $q'\in(1,q)$, even if present in the statement of \cite[Prop. 6.2]{NobiliViolo21} are actually not used in its proof).
\end{proof}
 We shall need an alternative version of the semicontinuity result \eqref{eq:Gamma liminf} to deal with locally Sobolev functions; we include a proof since we could not find it in the literature.
\begin{lemma}\label{lem:Ch is Lp-lsc}
Let $p \in (1,\infty)$ and suppose $(f_n)\subset W^{1,2}_{loc}(\X_n)$ is $L^p$-strong converging to $f_\infty$. Then 
\begin{equation}
      \|\nabla f_\infty \|^2_{L^2(\mm_\infty)} \le \liminf_{n\to\infty} \| \nabla f_n \|^2_{L^2(\mm_n)},
\label{eq:Ch is Lp-lsc}  
\end{equation}
(meaning that, if the right hand side is finite, then $f_\infty\in W^{1,2}_{loc}(\X_\infty)$ and \eqref{eq:Ch is Lp-lsc} holds).
\end{lemma}
\begin{proof}
Since $|f_n|\to |f_\infty|$ $L^p$-strongly (see \cite[a) in Prop. 3.3]{AmbrosioHonda17}) and $|\nabla f_n|=|\nabla |f_n||$ $\mm$-a.e.\ for every $f_n$, without loss of generality we can suppose $f_n,f_\infty$ nonnegative. If the liminf in \eqref{eq:Ch is Lp-lsc} is infinite, there is nothing to prove. So, let us assume that it is finite. For every $k \in \N$, we  consider $\varphi^k\in \LIP([0,\infty)$  with $\Lip(\varphi^k)\le 1$, $\phi^k(0)=0,$ converging point-wise to the identity as $k\uparrow \infty$ and such that that $\{\varphi^k(f_{n})\}_n$ is $L^2$-bounded. For instance we can take $\varphi^k(t) \coloneqq (t-1/k)^+ \wedge k$, indeed
\[\|\varphi^k(f_{n})\|^2_{L^2(\mm_{n})} \le k^2 \mm_{n}(\{ f_{n}  >1/k\})\le k^{2+p} \|f_{n}\|_{L^p(\mm_{n})}^p,\]
for every $n\in\N$. Again by \cite[a) in Prop. 3.3]{AmbrosioHonda17}, we have $\varphi^k(f_{n})$ is $L^p$-strong convergent to $\varphi^k(f_\infty)$. Moreover is also $L^2$-bounded, thus it is also  $L^2$-weak convergent to $\varphi^k(f_\infty)$. Then, by \eqref{eq:Gamma liminf} we have $\varphi^k(f_\infty) \in W^{1,2}(\X_\infty)$ and
\[ \| \nabla (\varphi^k(f_\infty)) \|_{L^2(\mm_\infty)}^2 \le \liminf_{n\to\infty} \| \nabla (\varphi^k(f_{n}))\|_{L^2(\mm_{n})}^2 \le \liminf_{n\to\infty} \| \nabla f_{n} \|_{L^2(\mm_{n})}^2<\infty,
\]
having used the fact that $\varphi^k$ is $1$-Lipschitz. By arbitrariness of $k >0$ and since $\varphi^k(f_\infty) \to f_\infty$ pointwise, we see by semicontinuity \eqref{eq:W12loc lsc} that \eqref{eq:Ch is Lp-lsc} follows.
\end{proof}

The following lemma allows extracting $L^2_{loc}$-converging subsequences from $W^{1,2}$-boundedness.
\begin{lemma}\label{lem:pmGHW12loc}
Let $p\ge 2$ and suppose $u_n \in W^{1,2}_{loc}(\X_n)$ converges $L^p$-weak to $u_\infty \in L^p(\mm_\infty)$ and $\sup_n \|\nabla u_n\|_{L^2(\mm_n)} <\infty$. Then, up to  a subsequence $u_n$ converges $L^2_{loc}$-strong to $u_\infty \in W^{1,2}_{loc}(\X_\infty)$ with $|\nabla u_\infty|\in L^2(\mm_\infty)$.
\end{lemma}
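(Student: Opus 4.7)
The plan is to localize via Lipschitz cut-offs, use a Rellich--Kondrachov type compactness on varying $\RCD(K,N)$ spaces to extract an $L^2_{loc}$-strong convergent subsequence, and then mimic the truncation-and-Mosco-convergence argument from the proof of Lemma \ref{lem:Ch is Lp-lsc} to obtain the Sobolev regularity and the global gradient bound.

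First I would fix, for each $R>0$, a cut-off $\eta_R\in C_{bs}(\Z)$ with $0\le\eta_R\le 1$, $\eta_R\equiv 1$ on $B_R(x_\infty)$, $\supp\eta_R\subset B_{R+1}(x_\infty)$ and $\Lip(\eta_R)\le 2$. Bishop--Gromov \eqref{eq:Bishop} together with pmGH-convergence ensures $\sup_n\mm_n(\supp\eta_R)<\infty$; combined with H\"older and the $L^p$-boundedness of $u_n$ (which holds since $p\ge 2$ and the sequence is $L^p$-weak convergent), this gives $\sup_n\|u_n\|_{L^2(\supp\eta_R)}<\infty$. The Leibniz rule and the assumed bound on $\|\nabla u_n\|_{L^2(\mm_n)}$ then show that $\{\eta_R u_n\}_n\subset W^{1,2}(\X_n)$ is uniformly bounded in $W^{1,2}$. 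Rellich--Kondrachov compactness for $\RCD(K,N)$ spaces on varying ambient spaces (see, e.g., \cite[Prop.~3.3]{AmbrosioHonda17}) then yields, along a subsequence, $\eta_R u_n\to v_R$ $L^2$-strong for some $v_R\in L^2(\mm_\infty)$. Testing against $\psi\eta_R\in C_{bs}(\Z)$ for arbitrary $\psi\in C_{bs}(\Z)$ and using the $L^p$-weak hypothesis identifies $v_R=\eta_R u_\infty$ $\mm_\infty$-a.e.. A diagonal extraction over $R=R_k\uparrow\infty$ produces a single subsequence for which $\eta_{R_k}u_n\to\eta_{R_k}u_\infty$ $L^2$-strong for every $k$, and since any $\psi\in C_{bs}(\Z)$ is supported in some $B_{R_k}(x_\infty)$, this yields $u_n\to u_\infty$ in $L^2_{loc}$-strong.

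For the Sobolev regularity, I would mimic the proof of Lemma \ref{lem:Ch is Lp-lsc}. For $k\in\N$ set $\varphi^k(t)\coloneqq (t-1/k)^+\wedge k$. By Markov's inequality $\sup_n\|\varphi^k(u_n)\|_{L^2(\mm_n)}<\infty$, and by the chain rule $\|\nabla\varphi^k(u_n)\|_{L^2(\mm_n)}\le\|\nabla u_n\|_{L^2(\mm_n)}$. Since $\varphi^k$ is $1$-Lipschitz with $\varphi^k(0)=0$ and $u_n\to u_\infty$ is $L^2_{loc}$-strong, a cut-off argument shows $\varphi^k(u_n)\to\varphi^k(u_\infty)$ in $L^2_{loc}$-strong; combined with the global $L^2$-boundedness this upgrades to $L^2$-weak. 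The Mosco inequality \eqref{eq:Gamma liminf} then gives $\varphi^k(u_\infty)\in W^{1,2}(\X_\infty)$ with $\|\nabla\varphi^k(u_\infty)\|^2_{L^2(\mm_\infty)}\le\liminf_n\|\nabla u_n\|^2_{L^2(\mm_n)}$. Letting $k\to\infty$, so that $\varphi^k(u_\infty)\to u_\infty$ pointwise, \eqref{eq:W12loc lsc} yields $u_\infty\in W^{1,2}_{loc}(\X_\infty)$ with $|\nabla u_\infty|\in L^2(\mm_\infty)$.

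The main obstacle will be reconciling the local nature of the $L^p$-weak-to-$L^2_{loc}$-strong extraction with the global gradient bound: since we lack a global $L^2$-bound on $u_n$, the Mosco convergence cannot be applied directly to $u_n$ itself, and it is exactly the truncation $\varphi^k$ that provides the global $L^2$-boundedness needed to invoke \eqref{eq:Gamma liminf} without losing the bound on the gradient.
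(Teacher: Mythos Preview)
Your argument is correct. The first half (extracting the $L^2_{loc}$-strong convergent subsequence via cut-offs, $W^{1,2}$-boundedness, and Rellich--Kondrachov on varying spaces) matches the paper's proof almost exactly; the only cosmetic difference is that the paper observes that for each fixed $\varphi\in\Lip_{bs}(\Z)$ the \emph{full} sequence $\varphi u_n$ converges (by the sub-subsequence principle, since every subsequential limit is $\varphi u_\infty$), so no diagonal extraction is needed.

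For the Sobolev regularity and the bound $|\nabla u_\infty|\in L^2(\mm_\infty)$ you take a different route. You truncate via $\varphi^k$ to manufacture global $L^2$-boundedness and then invoke Mosco \eqref{eq:Gamma liminf}. The paper instead reuses the localization already done: since $\varphi u_n\to\varphi u_\infty$ $L^2$-strong with bounded Cheeger energy, Mosco applied directly to $\varphi u_n$ gives $\varphi u_\infty\in W^{1,2}(\X_\infty)$, hence $u_\infty\in W^{1,2}_{loc}(\X_\infty)$; then the localized lower semicontinuity \cite[Lemma~5.8]{AmbrosioHonda17} applied on a ball $B$ where $\varphi\equiv1$ yields $\int_B|\nabla u_\infty|^2\le\liminf_n\int_B|\nabla(\varphi u_n)|^2\le\sup_n\|\nabla u_n\|_{L^2}^2$, and arbitrariness of $B$ finishes. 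This avoids your truncation layer entirely---the ``main obstacle'' you flag simply does not arise, because the cut-off $\varphi$ already puts you in $W^{1,2}$ with $L^2$-strong convergence. Your approach is correct but imports machinery from Lemma~\ref{lem:Ch is Lp-lsc} that is unnecessary here.
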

\begin{proof}
We first prove the $L^2_{loc}$ convergence.
Consider $\varphi \in \Lip_{bs}(\Z)$ (recall that $(\Z,\sfd)$ is a space realizing the convergence). Since $\sup_n\mm_n(B_R(x_n))<+\infty$, for every $R>0$, by H\"older inequality we have  $\sup_n \| \varphi u_n\|_{L^2(\mm_n)} <+\infty$. Analogously using the Leibniz rule, $\varphi u_n \in W^{1,2}(\X_n)$ with $\sup_n \| \nabla ( \varphi u_n) \|_{L^2(\mm_n)} <\infty$. Thus there exists a subsequence $(n_k)$ (see \cite[Theorem 6.3]{GigliMondinoSavare13}) such that  $\varphi u_{n_k}$ converges $L^2$-strong to some $v$, which must be equal to $\varphi u_\infty$ by uniqueness of weak limits. Hence the whole sequence $\phi u_n$ is $L^2$-strongly convergent to $\varphi u_\infty$. The fact that $u_\infty \in W^{1,2}_{loc}(\X_\infty)$ follows by the Mosco convergence of the Cheeger energies (see \eqref{eq:Gamma liminf}), indeed for every $\phi \in \LIP_{bs}(Z)$, $\rmCh(\varphi u_\infty)\le \liminf_n \rmCh( \varphi u_{n}) <\infty $. It remains to prove that $|\nabla u_\infty|\in L^2(\mm_\infty)$. Fix a ball $B\subset Z$  and take $\varphi \in \Lip_{bs}(\Z)$ equal to $1$ on $B$. Using \cite[Lemma 5.8]{AmbrosioHonda17}, we have 
$  \int_B  |\nabla u_\infty |^2 \, \d \mm_\infty = \int_B |\nabla (\varphi u_\infty) | \, \d \mm_\infty  \le \liminf_n \int_B  |\nabla (\varphi u_n)|^2 \, \d \mm_n \le \sup_n \|\nabla u_n\|_{L^2(\mm_n)}<\infty$. Where in the first and last step we used the locality of the gradient. By the arbitrariness of $B$ this implies $|\nabla u_\infty| \in L^2(\mm_\infty)$. 
\end{proof}

\begin{lemma}\label{lem:strongrecov}
Let $p\ge 2$ and  $u_\infty \in W^{1,2}_{loc}(\X_\infty)\cap L^{p}(\mm_\infty)$ with  $|\nabla u_\infty|\in L^2(\mm_\infty)$. Then, there exists a  sequence $u_n \in W^{1,2}_{loc}(\X_n)\cap L^{p}(\mm_n)$ that converges $L^{p}$ and $L^2_{loc}$-strong to $u_\infty$ and so that $|\nabla u_n|$ converges $L^2$-strong to $|\nabla u_\infty|$.
\end{lemma}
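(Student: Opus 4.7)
The plan is a three-stage approximation combined with a diagonal argument, reducing the problem to the pmGH-stability of restrictions of Lipschitz functions with bounded support on the extrinsic realization space $\Z$.

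First, since $u_\infty\in L^p(\mm_\infty)$ with $p\ge 2$ implies $\mm_\infty(\{|u_\infty|>t\})<\infty$ for every $t>0$, I would apply Lemma \ref{lem:el trick} on $\X_\infty$ to obtain a sequence $v^k_\infty\in W^{1,2}(\X_\infty)\cap L^\infty(\mm_\infty)$ with bounded support (say in $B_{R_k}(x_\infty)$) such that $v^k_\infty\to u_\infty$ in $L^p(\mm_\infty)$ and $|\nabla(v^k_\infty-u_\infty)|\to 0$ in $L^2(\mm_\infty)$; the fact that the functions produced by Lemma \ref{lem:el trick} are in $L^\infty$ is explicit in its proof. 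Next, for each $k$, I would approximate $v^k_\infty$ by $w^{k,j}\in\LIP_{bs}(\X_\infty)$ with $\|w^{k,j}\|_\infty\le \|v^k_\infty\|_\infty$ and uniform support in $B_{R_k+1}(x_\infty)$, using density in energy of Lipschitz functions in $W^{1,2}$, a truncation at $\pm\|v^k_\infty\|_\infty$, and multiplication by a fixed Lipschitz cutoff equal to $1$ on $B_{R_k}(x_\infty)$; the uniform $L^\infty$-bound together with finite-measure support upgrades the $W^{1,2}$-strong convergence to $L^p$-strong as $j\to\infty$. Each $w^{k,j}$ extends via the McShane--Whitney construction to $\bar w^{k,j}\in\LIP_{bs}(\Z)$.

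For the third stage, set $w^{k,j}_n\coloneqq \bar w^{k,j}|_{\X_n}$. By standard pmGH-stability results for Lipschitz test functions on the extrinsic space (see \cite{GigliMondinoSavare13, AmbrosioHonda17}), for fixed $k,j$ the sequence $w^{k,j}_n$ converges $W^{1,2}$-strong and $L^p$-strong to $w^{k,j}$ as $n\to\infty$, and $|\nabla w^{k,j}_n|$ converges $L^2$-strong to $|\nabla w^{k,j}|$. A diagonal selection $k(n),j(n)\to\infty$ sufficiently slowly then yields the sequence $u_n\coloneqq w^{k(n),j(n)}_n$; the diagonal passage is routine once one observes that each convergence mode reduces to integrals against a countable dense collection of test functions in $C_{bs}(\Z)$ paired with convergence of the relevant norms.

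The main technical obstacle will be the $W^{1,2}$-strong convergence in the last step, i.e., establishing $\rmCh_{\X_n}(\bar w|_{\X_n})\to \rmCh_{\X_\infty}(\bar w|_{\X_\infty})$ for a fixed $\bar w\in\LIP_{bs}(\Z)$; the lower bound is immediate from Mosco semicontinuity \eqref{eq:Gamma liminf}, whereas the matching upper bound exploits the identity $|\nabla \bar w|_{\X_n}|=\lip_{\sfd_n}(\bar w|_{\X_n})$ from \eqref{eq:wug is lip} together with the continuity of local Lipschitz constants of a fixed Lipschitz function on $\Z$ under pmGH-convergence. A secondary but still delicate point is the $L^p$-strong upgrade from $L^2$-strong at the second stage, which on varying spaces is non-obvious but can be handled via the uniform $L^\infty$-bound and bounded support through a Brezis--Lieb type argument (Lemma \ref{lem:BreLieb}).
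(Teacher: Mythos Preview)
Your approach is essentially the same as the paper's: both first reduce via Lemma~\ref{lem:el trick} to functions in $W^{1,2}(\X_\infty)$ with bounded support, then build recovery sequences along the $\X_n$, and conclude by a diagonal argument. The paper simply outsources your stages~2--5 to \cite[Lemma~6.4]{NobiliViolo21} (which already produces a sequence converging $L^p$- and $W^{1,2}$-strong) and then invokes \cite[Theorem~5.7]{AmbrosioHonda17} to upgrade $W^{1,2}$-strong convergence to $L^2$-strong convergence of $|\nabla u_n|$; it also gets the $L^2_{loc}$-strong convergence for free at the end from Lemma~\ref{lem:pmGHW12loc}. Your version unpacks these black boxes explicitly.

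One caution on your sketch of the ``main technical obstacle'': the local Lipschitz constant $\lip_\Z \bar w$ of a fixed $\bar w\in\LIP_{bs}(\Z)$ is in general only upper semicontinuous, not continuous, and moreover $\lip_{\X_\infty}(\bar w|_{\X_\infty})$ may be strictly smaller than $\lip_\Z(\bar w)$ on $\X_\infty$; so the phrase ``continuity of local Lipschitz constants under pmGH-convergence'' is not literally correct. The actual $\Gamma$-$\limsup$ argument in \cite{GigliMondinoSavare13} handles this via the asymptotic Lipschitz constant and an additional layer of approximation. Also, your second concern is overblown: at stage~3 the $L^p$-strong convergence of $\bar w^{k,j}|_{\X_n}$ is immediate, since $|\bar w^{k,j}|^p\in C_{bs}(\Z)$ and $\mm_n\rightharpoonup\mm_\infty$; no Brezis--Lieb argument is needed there.
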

\begin{proof}
By Lemma \ref{lem:el trick} there exists a sequence $u_n \in W^{1,2}(\X_\infty)\cap L^p(\mm_\infty)$ such that $u_n \to u_\infty$ in $L^p(\mm_\infty)$ and $|\nabla u_n|\to |\nabla u_\infty|$ in $L^2(\mm_\infty).$ From \cite[Lemma 6.4]{NobiliViolo21}  (there written for compact spaces, but the same proof works in the present setting)  there exists a sequence $u_n^k \in W^{1,2}(\X_n)$ that converges $L^p$ and $W^{1,2}$-strong to $u_n$.
 By \cite[Theorem 5.7]{AmbrosioHonda17} this implies that $|\nabla u^k_n|$ converges $L^2$-strong to $|\nabla(\eta_k u_n)|$.  The conclusion then follows via diagonal argument. Finally the $L^2_{loc}$-strong convergence  follows from Lemma \ref{lem:pmGHW12loc}.
 \end{proof}
 We prove a convergence result for pairings (the case $p=2$ follows from \cite[Theorem 5.4]{AmbrosioHonda17}).
 \begin{lemma}\label{lem:coupling nablas}
     Let $p \in [2,\infty)$ and $u_n,v_n \in L^p(\mm_n)\cap W^{1,2}_{loc}(\X_n)$ be converging $L^p$-strong to $u_\infty,v_\infty$ respectively. Suppose that $u_\infty\in W^{1,2}_{loc}(\X_\infty)$, that $\| \nabla u_n\|_{L^2(\mm_n)} \to \| \nabla u_\infty \|_{L^2(\mm_\infty)}<+\infty$ and $\limsup_n \| \nabla v_n\|_{L^2(\mm_n)}<+\infty$. Then $v_\infty\in W^{1,2}_{loc}(\X_\infty)$, $|\nabla v_\infty| \in L^2(\mm_\infty)$ and
     \[ \lim_{n\to\infty}\int \la \nabla u_n , \nabla v_n \ra\,\d\mm_n = \int \la \nabla u_\infty , \nabla v_\infty \ra\,\d\mm_\infty.
     \]
 \end{lemma}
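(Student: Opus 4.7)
My plan is to handle the two assertions separately. For the regularity statement that $v_\infty \in W^{1,2}_{loc}(\X_\infty)$ with $|\nabla v_\infty| \in L^2(\mm_\infty)$, I would apply Lemma \ref{lem:Ch is Lp-lsc} directly to $(v_n)$, exploiting the $L^p$-strong convergence together with the uniform bound $\limsup_n \|\nabla v_n\|_{L^2(\mm_n)} < \infty$. This step also delivers the inequality $\|\nabla v_\infty\|_{L^2(\mm_\infty)}^2 \le \liminf_n \|\nabla v_n\|_{L^2(\mm_n)}^2$ to be used below.

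For the convergence of the pairings, the plan is a quadratic perturbation trick. Set $I_n := \int \langle \nabla u_n,\nabla v_n\rangle\,d\mm_n$ and $I_\infty := \int \langle \nabla u_\infty, \nabla v_\infty\rangle\,d\mm_\infty$; by Cauchy--Schwarz $(I_n)$ is bounded, so it suffices to prove that every convergent subsequence has limit $I_\infty$. After extracting further subsequences (not relabeled) one may assume $I_n \to a$ and $\|\nabla v_n\|_{L^2(\mm_n)}^2 \to L$ for some $L \in [0,\infty)$. For each $t \in \R$ the sequence $u_n + tv_n$ converges $L^p$-strong to $u_\infty + tv_\infty$ by \eqref{eq:linearity Lpconvergence}, so Lemma \ref{lem:Ch is Lp-lsc} yields $\|\nabla(u_\infty+tv_\infty)\|_{L^2(\mm_\infty)}^2 \le \liminf_n\|\nabla(u_n+tv_n)\|_{L^2(\mm_n)}^2$. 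Expanding both sides via bilinearity of $\langle \nabla\cdot,\nabla\cdot\rangle$ and invoking the hypothesis $\|\nabla u_n\|_{L^2(\mm_n)} \to \|\nabla u_\infty\|_{L^2(\mm_\infty)}$, this reduces to
\[
(L - \|\nabla v_\infty\|_{L^2(\mm_\infty)}^2)\,t^2 + 2(a - I_\infty)\,t \ge 0 \qquad \forall\,t \in \R,
\]
whose leading coefficient is nonnegative by the first step. A quadratic in $t$ with zero constant term, nonnegative leading coefficient, and values nonnegative on all of $\R$ must have vanishing linear coefficient, forcing $a = I_\infty$.

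The main subtlety to overcome is the asymmetry in the hypotheses: convergence is known for $\|\nabla u_n\|_{L^2(\mm_n)}$ but only an upper bound is available for $\|\nabla v_n\|_{L^2(\mm_n)}$. This prevents any direct polarization based solely on $u_n \pm v_n$ from pinning down the limit of $I_n$, since the semicontinuity from Lemma \ref{lem:Ch is Lp-lsc} then produces only inequalities involving the unknown $L$. Perturbing over the whole one-parameter family $t \mapsto u_n + tv_n$ and invoking semicontinuity for each $t$ generates enough rigidity to force $a = I_\infty$ irrespectively of $L$, and this is the crux of the argument.
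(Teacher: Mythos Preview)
Your proposal is correct and follows essentially the same approach as the paper: apply Lemma \ref{lem:Ch is Lp-lsc} to the one-parameter family $u_n+tv_n$ and exploit the convergence of $\|\nabla u_n\|_{L^2}$ to isolate the cross term. The only cosmetic difference is that the paper, instead of passing to subsequences and arguing with the global quadratic $(L-\|\nabla v_\infty\|^2)t^2+2(a-I_\infty)t\ge 0$, simply divides the expanded inequality by $t$ and sends $t\to 0^{\pm}$ to obtain $I_\infty\le \liminf_n I_n$ and $I_\infty\ge \limsup_n I_n$ directly.
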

 \begin{proof}
 The fact that $v_\infty\in W^{1,2}_{loc}(\X_\infty)$ with $|\nabla v_\infty| \in L^2(\mm_\infty)$ follows from Lemma \ref{lem:Ch is Lp-lsc}.
 In particular by Cauchy-Schwarz $\la \nabla u_\infty , \nabla v_\infty \ra \in L^1(\mm_\infty)$. Let $t>0$ and notice that $u_n + tv_n$ converges $L^p$-strong to $u_\infty + t v_\infty$ by \eqref{eq:linearity Lpconvergence}. Applying again Lemma \ref{lem:Ch is Lp-lsc} we have $u_\infty+tv_\infty\in W^{1,2}_{loc}(\X_\infty)$ and
 \[
 \begin{aligned}
 \int 2t\la \nabla u_\infty , \nabla v_\infty \ra +& |\nabla u_\infty|^2 + t^2|\nabla v_\infty|^2 \,\d\mm_\infty =\int |\nabla (u_\infty+tv_\infty)|^2\,\d\mm_\infty \\
 &\overset{\eqref{eq:Ch is Lp-lsc}}{\le}\liminf_{n\to\infty} \int |\nabla (u_n+tv_n)|^2\,\d\mm_n \\
 & \le  2t\liminf_{n\to\infty}\int \la \nabla u_n , \nabla v_n \ra\,\d\mm_n +  ^2\limsup_n\int|\nabla v_n|^2 \,\d\mm_n +\int |\nabla u_\infty|^2 \d \mm_\infty.
 \end{aligned}
 \]
 Simplifying $\int |\nabla u_\infty|^2 \d \mm_\infty$, dividing by $t$ and sending $t \downarrow 0$ we obtain $\int \la \nabla u_\infty , \nabla v_\infty \ra \d \mm_\infty \le\liminf_{n\to\infty}\int \la \nabla u_n , \nabla v_n \ra\,\d\mm_n.$ Arguing analogously for $t<0$, we conclude.
 \end{proof}

\subsection{Concentration compactness principles}\label{sec:cc appendix}
Here we briefly extend two concentration compactness principles from \cite{Lions84,Lions85} (see also \cite{Struwe08}) for general sequences of probabilities on metric measure spaces.

The first deal with an arbitrary sequence of probability measures on varying ambient space. Compare also with the version \cite[Lemma 2.1]{AntonelliNardulliPozzetta22}. 
\begin{lemma}\label{lem:ConcComp1}
Let $(\Z,\sfd)$ be a complete and separable metric spaces and let $\nu_n\in \PP(\Z)$, for $n \in \N$. Then, up to a subsequence, one of the following holds:
\begin{itemize}
    \item[{\color{blue}  \rm i)}] {\sc Compactness}. There exists $(z_n) \subset \Z$ such that for all $\eps>0$, there exists $R>0$ satisfying \[ \nu_n(B_R(z_n)) \ge 1-\eps, \qquad \forall n \in \N.\]

    \item[{\color{blue}   \rm ii)}] {\sc Vanishing}. \[ \lim_{n\to\infty} \sup_{z \in \Z} \nu_n(B_R(z)) =0,\qquad \forall R>0.\]
    
    \item[{\color{blue}   \rm iii)}] {\sc Dichotomy}. There exists $\lambda \in (0,1)$ with $\lambda \ge \limsup_n  \sup_{z \in \Z} \nu_n(B_R(z))$, for all $R>0,$ so that: there exists $R_n\uparrow \infty$, $(z_n) \subset \Z$ and there are $\nu_n^1,\nu_n^2$ two non-negative Borel measures satisfying
    \[
    \begin{split}
        &0 \le \nu_n^1+\nu_n^2 \le \nu_n, \\
        &\supp( \nu_n^1) \subset B_{R_n}(z_n), \quad \supp( \nu_n^2) \subset \Z \setminus B_{10R_n}(z_n), \\
        & \limsup_{n\to\infty}   \ \big| \lambda -  \nu_n^1( \Z) \big|+ \big| (1-\lambda ) -  \nu_n^2( \Z) \big|  =0 .
    \end{split}
    \]
\end{itemize}
\end{lemma}
The above can be obtained arguing exactly as in \cite[Lemma I in Section 4.3]{Struwe08} and therefore its proof is omitted.  We briefly comment on the difference in case iii) with respect to \cite{Struwe08}:  our formulation of  case iii) using a sequence $R_n$  follows from the one used in \cite{Struwe08} (where $R$ is fixed depending on a parameter $\eps>0$)  with a diagonal argument (this is observed also in the proof of \cite[Theorem 4.9]{Struwe08}); the condition $\lambda \ge \limsup_n  \sup_{z \in \Z} \nu_n(B_R(z))$ (not present in \cite{Struwe08}) instead can be directly checked to hold by the way $\lambda$ is chosen in the proof.

The second principle is a concentration compactness result for the Sobolev embedding stating that concentration may occur only at countably-many points. With respect to \cite[Lemma 6.6]{NobiliViolo21}, here we extend the principle to deal with varying pmGH-convergent $\RCD$ spaces (hence, the difference arises when considering noncompact limit spaces). 
\begin{lemma}\label{lem:conccomp2}
Let $(\X_n,\sfd_n,\mm_n,x_n)$,  $n \in\N\cup\{\infty\}$, be  pointed $\RCD(K,N)$ spaces, $K \in \R$, $N \in(1,\infty)$  with $\X_n\overset{pmGH}{\to}\X_\infty$ and assume that $\X_n$ supports a Sobolev inequality \eqref{eq:convention} with uniformly bounded constants $A_n>0,B_n\ge 0$.

Suppose further that $u_n \in W_{loc}^{1,2}(\X_n) \cap  L^{2^*}(\mm_n)$ with $\sup_n \|\nabla u_n\|_{L^2(\mm_n)}<\infty $ is $L^2_{loc}$-strong converging to $u_\infty \in L^{2^*}(\mm_\infty)$ and suppose that $|\nabla u_n|^2 \mm_n \weakto \omega,$ $|u_n|^{2^*}\mm_n\weakto \nu$ in duality with $C_{bs}(\Z)$ and $C_b(\Z)$, respectively (where $(\Z,\sfd)$ is a fixed realization of the convergence). 
    
    Then, $u_\infty \in W^{1,2}_{loc}(\X_\infty)$ with $|\nabla u_\infty|\in L^2(\mm_\infty)$ and:
    \begin{itemize}
        \item[{\color{blue}  \rm i)}] there exists a countable set of indices $J$, points $(x_j)_{j\in J}\subset \X_\infty$ and  weights $(\nu_j)_{j\in J}\subset\R^+$ so that
        \[ \nu =|u_\infty|^{2^*}\mm_\infty +\sum_{j\in J}\nu_j\delta_{x_j};\]
        \item[{\color{blue}  \rm ii)}] there exists $(\omega_j)_{j\in J} \subset \R^+$ satisfying $\nu_j^{2/2^*} \le (\limsup_{n}A_n)\omega_j$ and such that
        \[\omega \ge |\nabla u_\infty|^2\mm_\infty +\sum_{j\in J}\omega_j\delta_{x_j}.\]
        In particular, we have $\sum_j \nu_j^{2/2^*}<\infty$.
    \end{itemize}
\end{lemma}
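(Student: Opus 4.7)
The argument follows the classical Lions concentration-compactness scheme \cite{Lions84,Lions85}, transplanted to the varying-space setting via the tools collected in Section~\ref{sec:conv} and the appendix. First, since $|u_n|^{2^*}\mm_n \weakto \nu$ with $\nu$ finite, we have $\sup_n \|u_n\|_{L^{2^*}(\mm_n)}<\infty$; combined with the $L^2_{loc}$-strong convergence $u_n \to u_\infty$, one verifies that, up to a subsequence, $u_n$ converges $L^{2^*}$-weak to $u_\infty$. Since also $\sup_n\|\nabla u_n\|_{L^2(\mm_n)}<\infty$, Lemma~\ref{lem:pmGHW12loc} yields $u_\infty \in W^{1,2}_{loc}(\X_\infty)$ with $|\nabla u_\infty|\in L^2(\mm_\infty)$.

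The key step is the construction of a difference sequence that isolates the ``defect of compactness''. By Lemma~\ref{lem:strongrecov} there exists a recovery sequence $\tilde u_n \in W^{1,2}_{loc}(\X_n)\cap L^{2^*}(\mm_n)$ converging $L^{2^*}$-strong and $L^2_{loc}$-strong to $u_\infty$, with $|\nabla \tilde u_n|$ converging $L^2$-strong to $|\nabla u_\infty|$. Setting $v_n := u_n-\tilde u_n$, linearity \eqref{eq:linearity Lpconvergence} gives $v_n\to 0$ in $L^{2^*}$-weak and $L^2_{loc}$-strong, with $\sup_n\|\nabla v_n\|_{L^2(\mm_n)}<\infty$. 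Defining $\tilde\nu := \nu-|u_\infty|^{2^*}\mm_\infty$ (non-negative by weak lower semicontinuity) and extracting a subsequence so that $|\nabla v_n|^2 \mm_n \weakto \tilde\omega$ in $C_{bs}(\Z)$, an application of the Brezis--Lieb Lemma~\ref{lem:BreLieb} to the sequences $\phi u_n$ and $\phi \tilde u_n$ (with $q=2^*$, $q'=2$) against arbitrary $\phi \in C_{bs}(\Z)$ shows that $|v_n|^{2^*}\mm_n\weakto \tilde\nu$. For the gradients, the expansion $|\nabla v_n|^2 = |\nabla u_n|^2 + |\nabla \tilde u_n|^2 - 2\la \nabla u_n,\nabla \tilde u_n\ra$ combined with the $L^2$-strong convergence of $|\nabla\tilde u_n|\to |\nabla u_\infty|$ and a localized form of Lemma~\ref{lem:coupling nablas} (applied after cutoff by $\phi\in \Lip_{bs}(\Z)$, with $\tilde u_n$ playing the role of the sequence with convergent gradient norms) will yield the measure-level bound $\omega \ge |\nabla u_\infty|^2\mm_\infty + \tilde\omega$.

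Having set up these measures, one feeds the Sobolev inequality \eqref{eq:convention} on $\X_n$ with the admissible test function $\phi v_n$ for arbitrary nonnegative $\phi \in \Lip_{bs}(\Z)$:
\[
\|\phi v_n\|_{L^{2^*}(\mm_n)}^2 \le A_n \|\nabla(\phi v_n)\|_{L^2(\mm_n)}^2 + B_n \|\phi v_n\|_{L^2(\mm_n)}^2.
\]
Using the Leibniz rule $|\nabla(\phi v_n)| \le \phi|\nabla v_n| + |v_n||\nabla\phi|$, the $L^2_{loc}$-strong vanishing of $v_n$ makes both the cross term (by Cauchy--Schwarz against the uniformly bounded $\|\nabla v_n\|_{L^2}$) and the term $\int v_n^2|\nabla\phi|^2\,\d\mm_n$ negligible, while also $B_n\|\phi v_n\|_{L^2}^2\to 0$. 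Passing to the limit gives the reverse-H\"older inequality
\[
\Big(\int |\phi|^{2^*}\,\d\tilde\nu\Big)^{2/2^*} \le A_\infty \int \phi^2\,\d\tilde\omega, \qquad A_\infty := \limsup_n A_n,
\]
valid for every $\phi \in \Lip_{bs}(\Z)$.

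The conclusion is then extracted by the classical measure-theoretic argument of \cite[Lemma~1.2]{Lions85}, whose proof is purely measure-theoretic and carries over verbatim to arbitrary Polish spaces: the above inequality forces $\tilde\nu$ to be purely atomic, $\tilde\nu = \sum_{j\in J}\nu_j\delta_{x_j}$, and $\tilde\omega \ge \sum_{j\in J}\omega_j\delta_{x_j}$ with $\nu_j^{2/2^*}\le A_\infty\omega_j$. Combined with the lower bound $\omega\ge |\nabla u_\infty|^2\mm_\infty + \tilde\omega$ from Step~2, this delivers i) and ii); the atoms lie in $\X_\infty$ since $|u_n|^{2^*}\mm_n$ is supported on $\X_n\subset \Z$ and the weak limit $\nu$ is supported on $\X_\infty$, while $\sum_j\nu_j^{2/2^*}<\infty$ follows from finiteness of $\tilde\omega$. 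The main technical obstacle is the decomposition $\omega \ge |\nabla u_\infty|^2\mm_\infty + \tilde\omega$: Lemma~\ref{lem:coupling nablas} is stated globally, and obtaining the bound as measures requires localization with cutoffs $\phi\in\Lip_{bs}(\Z)$ together with careful use of the $L^2$-strong convergence of $|\nabla \tilde u_n|$ to reduce the pairing $\int\phi\la\nabla u_n,\nabla\tilde u_n\ra\,\d\mm_n\to\int\phi|\nabla u_\infty|^2\,\d\mm_\infty$ to the hypotheses of that lemma term by term.
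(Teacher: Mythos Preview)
Your proposal is correct and follows the same architecture as the paper: reduce to $u_\infty=0$ by subtracting a recovery sequence $\tilde u_n$ (Lemma~\ref{lem:strongrecov}), identify $\nu-|u_\infty|^{2^*}\mm_\infty$ as the weak limit of $|v_n|^{2^*}\mm_n$ via Brezis--Lieb (Lemma~\ref{lem:BreLieb}), and run the reverse-H\"older argument on $\phi v_n$ with $\phi\in\Lip_{bs}(\Z)$ to extract the atoms (the paper delegates this last step to \cite[Lemma~6.6, Step~1]{NobiliViolo21}, but the content is what you wrote). The only substantive difference lies in how the gradient bound in ii) is obtained. You aim for the measure-level relation $\omega\ge|\nabla u_\infty|^2\mm_\infty+\tilde\omega$ (your expansion would in fact give equality) via a localized pairing $\int\phi\langle\nabla u_n,\nabla\tilde u_n\rangle\,\d\mm_n\to\int\phi|\nabla u_\infty|^2\,\d\mm_\infty$; this is correct but requires extending Lemma~\ref{lem:coupling nablas} to weighted integrals, which is the extra work you flag. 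The paper instead decouples the two pieces: it gets $\omega\ge|\nabla u_\infty|^2\mm_\infty$ directly from the localized lower semicontinuity of the Cheeger energy \cite[Lemma~5.8]{AmbrosioHonda17}, and separately checks $\omega(\{x_j\})=\bar\omega(\{x_j\})\ge\omega_j$ at each atom (as in \cite[Lemma~6.6, Step~2]{NobiliViolo21}); since $|\nabla u_\infty|^2\mm_\infty$ is atomless, the two combine to give ii). Your route would yield a slightly sharper identity if the localized pairing is carried out, while the paper's route needs only results already available.
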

\begin{proof}
We subdivide the proof into two steps.

\noindent{\color{blue} \sc Step 1}. Suppose first that $u_\infty=0$. Then, the conclusion follows arguing as in Step 1  of \cite[Lemma 6.6]{NobiliViolo21} taking here $\varphi$ a Lipschitz and boundedly supported (instead of only Lipschitz) cut-off and using the assumed $L^2_{loc}$-strong convergence.

\noindent{\color{blue} \sc Step 2}.  For general $u_\infty$, the idea is to apply the above to `$u_\infty -u_n$' and then use a Brezis-Lieb lemma to recover the information for $u_\infty$. Take $\tilde u_n$  a recovery sequence given by Lemma \ref{lem:strongrecov} for $u_\infty$.   Thus, for every $\varphi \in \Lip_{bs}(\Z)^+$, we have $\varphi u_n$ is $L^2$-strong to $\phi u_\infty$ and $L^{2^*}$-bounded and  $\varphi \tilde u_n $ is $L^2$ and $L^{2^*}$-strong convergent to $\phi u_\infty$. Therefore Lemma \ref{lem:BreLieb} ensures
\begin{equation}
    \lim_{n\to\infty} \int |\varphi|^{2^*} |u_n|^{2^*}\, \d \mm_n - \int |\varphi|^{2^*}|u_n-\tilde u_n|^{2^*}\, \d \mm_n = \int |\varphi|^{2^*}|u_\infty|^{2^*}\, \d\mm_\infty,\label{BrezisLieb}
\end{equation}
Now define $v_n\coloneqq  u_n -\tilde u_n$ and notice that all the assumptions ensures that $v_n $ is $L^2_{loc}$-strong and $L^{2^*}$-weak convergent to zero.
From the bounds $|v_n|^{2^*} \le 2^{2^*} (|u_n|^{2^*} + |\tilde u_n|^{2^*})$ and $|\nabla v_n|^2 \le 2(|\nabla u_n|^2 + |\nabla \tilde u_n|^2)$ by tightness we can extract a not relabelled subsequence where $|v_n|^{2^*}\mm_n $ converge in duality with $C_b(\Z)$ to $\bar \nu$ and $|\nabla v_n|^2\mm_n$ converge in duality with $C_{bs}(\Z)$ to a finite Borel measure  $\bar \omega$. Then from Step 1, i),ii) hold true for $(v_n)$, for suitable weights $(\nu_j),(\omega_j)\subset \R^+$ and points $(x_j)\subset \X_\infty $. Then passing to the limit in \eqref{BrezisLieb} 
\[ 
    \int \varphi^{2^*} \d \nu- \int \varphi^{2^*}\d \bar \nu= \int \varphi^{2^*}|u_\infty|^{2^*}\, \d \mm_\infty,\qquad \forall \varphi \in \Lip_{bs}(\Z)^+.
    \]
This in turn implies $\nu = |u_\infty|^{2^*}\mm_\infty + \bar \nu  = |u_\infty|^{2^*}\mm_\infty + \sum_j \nu_j\delta_{x_j}$ that is point i). We pass  to prove ii) and therefore we need to show separately  that
\[
\begin{aligned}
&\omega(\{x_j\})=\bar \omega(\{x_j\})\ge \omega_j, \quad \forall \, j \in J, \\
&\omega\ge |\nabla u_\infty|^2\mm_\infty.
\end{aligned}
\]
The first can be verified arguing exactly as in Step 2 of \cite[Lemma 6.6]{NobiliViolo21} replacing the usage of  \cite[Theorem 5.7]{AmbrosioHonda17} with Lemma \ref{lem:strongrecov} above. For the second, we fix $\phi \in C_{bs}(\Z)$, $\phi \ge 0$, and $\nchi \in \LIP_{bs}(\Z)$ be such that $\nchi=1$ in $\supp (\phi)$. It is easy to check that $\nchi u_n$ is $W^{1,2}$-weak converging to $\nchi u_\infty$ (recall that $u_n\to u_\infty$ in $L^2_{loc}$).   Then, \cite[Lemma 5.8]{AmbrosioHonda17} ensures that 
\[
     \int \phi |\nabla u_\infty|^2\, \d \mm_\infty=\int \phi |\nabla (\nchi u_\infty)|^2\, \d \mm_\infty \le \liminf_{n\to\infty}\int \phi |\nabla (\nchi u_n)|^2\, \d \mm_n= \liminf_{n\to\infty}\int \phi |\nabla  u_n|^2\, \d \mm_n
     \]
By arbitrariness of $\varphi$, we showed ii) and the proof is now concluded.
\end{proof}

\section{Technical results}
In this appendix, we collect basic results about Sobolev inequalities and a version of the chain rule for the weak upper gradient. 
\begin{lemma}\label{lem:sobolev loc}
    	Let $\Xdm$ be an $\RCD(K,N)$ space, $N \in (2,\infty),K\in\R$, satisfying for $A>0$
	\begin{equation}\label{eq:euclidean type sobolev}
		\|u\|_{L^{2^*}(\mm)}\le A\| \nabla  u\|_{L^2(\mm)}, \qquad \forall u \in \LIP_c(\X),
	\end{equation}
where $2^*\coloneqq \frac{2N}{N-2}.$
Then \eqref{eq:euclidean type sobolev} holds also for all $u \in W^{1,2}_{loc}(\X)$ satisfying $\mm(\{|u|>t\})<+\infty$ for all $t>0.$
\end{lemma}
\begin{proof}
It is enough to prove \eqref{eq:euclidean type sobolev} for non-negative functions. First note that \eqref{eq:euclidean type sobolev} holds for every $u\in W^{1,2}(\X)$, by  density in energy of Lipschitz functions \cite{AmbrosioGigliSavare11-3} and by the lower semicontinuity of the $L^{2^*}$-norm with respect to $L^2$-convergence.  For a general $u\ge0$ as in the hypotheses, if $\int |\nabla u|^2 \d \mm=+\infty$ there is nothing to prove, otherwise take $u_n\coloneqq((u-1/n)^+)\wedge n\in W^{1,2}(\X)$ (since $u_n,|\nabla u_n| \in L^2(\mm)$) and then send $n \to + \infty)$. 
\end{proof}

\begin{lemma}[Local Sobolev embedding]\label{lem:local sobolev embedding}
	Let $\Xdm$ be an
	$\RCD(K,N)$ space for some $K\in \R, N\in(2,\infty)$ and set $2^*\coloneqq 2N/(N-2)$. Then  exists $\tilde r_{{K^-},N}>0$ (with $\tilde r_{0,N}=+\infty$) such that for every $B_R(x)\subsetneq \X$, $R\le \tilde r_{{K^-},N}$ it holds
	\begin{equation}\label{eq:local sobolev}
		\|u\|_{L^{2^*}(\mm)}\le \frac{C_{N,K}R}{\mm(B_R(x))^{1/N}} \|\nabla u\|_{L^2(\mm)}, \qquad \forall u\in W^{1,2}_0(B_{R/2}(x)).
	\end{equation}
\end{lemma}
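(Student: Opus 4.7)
The plan is to combine local volume doubling from Bishop--Gromov with Rajala's local $(1,2)$-Poincaré inequality, and then invoke the standard Hajlasz--Koskela machinery to obtain the Sobolev embedding; the volume factor $R\,\mm(B_R(x))^{-1/N}$ comes out by scaling.

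First I would fix the threshold $\tilde r_{K^-,N}$ so that Bishop--Gromov \eqref{eq:Bishop} and the Poincaré constant are uniform on balls of radius up to $\tilde r_{K^-,N}$. Concretely, for $K\ge 0$ set $\tilde r_{0,N}=+\infty$, and for $K<0$ choose $\tilde r_{K^-,N}$ with $K^-\tilde r_{K^-,N}^2 \le 1$, so that on $B_{\tilde r_{K^-,N}}(x)$ the effective doubling constant from \eqref{eq:Bishop} depends only on $N$ and $K^-$.

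Second, I would rescale to remove the radius. Given $B_R(x)\subsetneq \X$ with $R\le \tilde r_{K^-,N}$, consider the metric measure space
\begin{equation*}
(\X,\tilde\sfd,\tilde\mm)\coloneqq \bigl(\X,\, R^{-1}\sfd,\, \mm(B_R(x))^{-1}\mm\bigr),
\end{equation*}
which, by the scaling properties of the $\RCD$-condition, is an $\RCD(KR^2,N)$ space with $\tilde\mm(B_1^{\tilde\sfd}(x))=1$. The minimal weak upper gradient in the rescaled space is $|\tilde\nabla u|=R\,|\nabla u|$, and a direct computation gives
\begin{equation*}
\|u\|_{L^{2^*}(\mm)}=\mm(B_R(x))^{1/2^*}\|u\|_{L^{2^*}(\tilde\mm)},\quad \|\nabla u\|_{L^{2}(\mm)}=\mm(B_R(x))^{1/2}R^{-1}\|\tilde\nabla u\|_{L^{2}(\tilde\mm)}.
\end{equation*}
Since $\tfrac{1}{2}-\tfrac{1}{2^*}=\tfrac{1}{N}$, the desired inequality \eqref{eq:local sobolev} is equivalent to the uniform claim: for every $\RCD(K',N)$ space $(\Y,\rho,\nu)$ with $|K'|\le 1$ and $\nu(B_1(y))=1$,
\begin{equation*}
\|v\|_{L^{2^*}(\nu)}\le C_N\|\nabla v\|_{L^{2}(\nu)},\qquad \forall v\in W^{1,2}_0(B_{1/2}(y)).
\end{equation*}

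Third, this uniform bound is a standard consequence of doubling plus $(1,2)$-Poincaré. On $B_1(y)$ the Bishop--Gromov inequality \eqref{eq:Bishop} yields a doubling constant depending only on $N$, and Rajala's theorem provides a local $(1,2)$-Poincaré inequality with constant depending only on $N$. The Hajlasz--Koskela Sobolev--Poincaré embedding then furnishes the desired inequality with exponent $2^*=2N/(N-2)$ for functions compactly supported in $B_{1/2}(y)$, after a density argument passing from $\LIP_c(B_{1/2}(y))$ to $W^{1,2}_0(B_{1/2}(y))$. Unscaling recovers \eqref{eq:local sobolev} with a constant $C_{N,K}$ depending only on $N$ and $K^-$.

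The main technical point is ensuring the uniformity of the doubling and Poincaré constants under the rescaling, which is precisely why the threshold $\tilde r_{K^-,N}$ is imposed; once this is in place, the Hajlasz--Koskela input is essentially black-box and the proof is routine.
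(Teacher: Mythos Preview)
Your approach and the paper's use the same ingredients (Bishop--Gromov doubling, Rajala's local Poincar\'e, and the Hajlasz--Koskela Sobolev--Poincar\'e embedding). The rescaling to the unit ball is a cosmetic repackaging; the paper simply works at scale $R$ directly. So the overall strategy is correct and essentially the same.

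There is, however, one genuine gap. The Hajlasz--Koskela output is a Sobolev--\emph{Poincar\'e} inequality
\[
\Big(\fint_{B_1(y)} |v-v_{B_1(y)}|^{2^*}\,\d\nu\Big)^{1/2^*}\le C\Big(\fint_{B_2(y)}|\nabla v|^2\,\d\nu\Big)^{1/2},
\]
not the pure Sobolev inequality you claim. To kill the mean term for $v$ supported in $B_{1/2}(y)$, one estimates $|v_{B_1(y)}|\le \nu(B_1(y))^{-1}\nu(B_{1/2}(y))^{1-1/2^*}\|v\|_{L^{2^*}(\nu)}$ and then must absorb the resulting term into the left-hand side. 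This requires a uniform bound $\bigl(\nu(B_{1/2}(y))/\nu(B_1(y))\bigr)^{1-1/2^*}\le 1-c<1$, i.e.\ the \emph{reverse doubling} inequality (Lemma~\ref{prop:reverse doubling}). Reverse doubling does not follow from doubling alone: it needs the ball to be a strict subset of the space, which is precisely the hypothesis $B_R(x)\subsetneq \X$ that you never invoke. Without it the conclusion is false (if $B_{R/2}(x)=\X$, constants lie in $W^{1,2}_0(B_{R/2}(x))$ and \eqref{eq:local sobolev} fails). The paper makes this absorption step explicit via Lemma~\ref{prop:reverse doubling}; you should add it rather than hide it inside the black box.
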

\begin{proof}
It is enough to prove the statement for $u \in \LIP_c(B_{R/2}(x)).$ Thanks to the uniformly locally doubling property of $\Xdm$ and the validity of a local $(1,1)$-Poincar\'e inequality (\cite{Rajala12}), from the results in \cite{HajlaszKoskela00} the following Sobolev-Poincar\'e inequality holds 
	\begin{equation}\label{eq:improved poincaret}
		\Big(\fint_{B_R(x)} |f-f_{B_R(x)}|^{2^*} \, \d \mm \Big)^\frac{1}{2^*} \le C(N,K,R_0)R \Big(\fint_{B_{2R}(x)} |\nabla  f|^{2}\, \d \mm \Big)^\frac{1}{2}, \qquad \forall \, f \in \LIP(\X),
	\end{equation}
	for every $R\le R_0$ and
	where  $f_{B_R(x)}\coloneqq\fint_{B_R(x)}f\, \d \mm$ (see also \cite{BB13}). Moreover if $K\ge 0$, the constant $C(N,K,R_0)$ can be taken independent of $R_0.$
	
	Hence applying \eqref{eq:improved poincaret} to $u \in \LIP_c(B_{R/2}(x))$ we can write
	\begin{align*}
		&\Big(\int_{B_R(x)}|u|^{2^*} \, \d \mm \Big)^\frac{1}{2^*} \le C_{N,K}R\frac{\mm(B_R(x))^{1/2^*}}{\mm(B_{2R}(x))^{1/2}} \Big(\int_{B_{2R}(x)} |\nabla  u|^2\Big)^\frac{1}{2}+\mm(B_R(x))^{1/2^*-1} \int_{B_{R/2}(x)} |u|\d \mm\\
		&\le C_{N,K}R\mm(B_R(x))^{-1/N} \Big(\int_{B_{2R}(x)} | \nabla u|^2\Big)^\frac{1}{2}+
		\frac{\mm(B_{R/2}(x))^{1-1/2^*}}{\mm(B_{R}(x))^{1-1/2^*}}\Big(\int_{B_{R/2}(x)}|u|^{2^*} \, \d \mm \Big)^\frac{1}{2^*},
        \end{align*}
        where we have used that $\supp(u)\subset B_{R/2}(x).$ Thanks to the reverse doubling inequality (recall \eqref{eq:reverse doubling}), assuming $R\le R_{K^-,N},$ we can absorb the rightmost term inside the left-hand side of the above to obtain \eqref{eq:local sobolev} as desired.
\end{proof}

A technical result needed in this note is a chain rule for the composition with an absolutely continuous function $\phi$, which we could not find in the literature (see \cite{Gigli14} or \cite{GP20} for the classical one with $\phi$ Lipschitz).
\begin{lemma}[Chain rule for composition with ${\sf AC}$-functions]\label{lem:chain rule}
    Let $\Xdm$ be a proper metric measure space and $u \in \LIP_{loc}(\Omega)$ with $\Omega \subset \X$ open. Let $\phi \in {\sf AC}_{loc}(I)$ with $I$ open interval such that $u(\Omega')\subset\subset I$ for every $\Omega'\subset\subset \Omega$. Suppose also that $|\phi'(u)||\nabla u| \in L^2_{loc}(\Omega)$.
    
    Then $\phi(u)\in W^{1,2}_{loc}(\Omega)$ and $|\nabla \phi(u)|= |\phi'(u)||\nabla u|$ $\mm$-a.e..
\end{lemma}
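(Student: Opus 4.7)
The strategy is to approximate $\phi$ by a sequence of Lipschitz functions $\phi_n$, apply the known Lipschitz chain rule to $\phi_n\circ u$, and pass to the limit using the semicontinuity result \eqref{eq:W12loc lsc}. First I would reduce the statement to an arbitrary $\Omega'\subset\subset \Omega$, so that by hypothesis $u(\Omega')$ is contained in some compact subinterval $K\subset I$. From that point on the modification of $\phi$ outside $K$ is immaterial.

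Next, I would construct the approximants by truncating $\phi'$: set $\psi_n \coloneqq (\phi'\wedge n)\vee (-n)$ on $K$, let $\phi_n$ be the primitive of $\psi_n$ matching $\phi$ at a fixed base point, and then extend $\phi_n$ to $\R$ by constants outside $K$. The resulting $\phi_n$ are globally Lipschitz and satisfy $\phi_n\to\phi$ uniformly on $K$, $|\phi_n'|\le |\phi'|$ a.e., and $\phi_n'\to\phi'$ pointwise $\Leb{1}$-a.e.\ on $K$. Since $u \in \LIP_{loc}(\Omega)$ we also have $\phi_n\circ u \in \LIP_{loc}(\Omega)$, and the standard Lipschitz chain rule (e.g.\ from \cite{GP20}) applies in sharp form:
\[
  |\nabla(\phi_n\circ u)| = |\phi_n'(u)|\,|\nabla u|\qquad \mm\text{-a.e.\ on }\Omega'.
\]

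The passage to the limit exploits the hypothesis $|\phi'(u)|\,|\nabla u| \in L^2_{loc}(\Omega)$ as a dominating majorant. By dominated convergence, $|\phi_n'(u)|\,|\nabla u|\to|\phi'(u)|\,|\nabla u|$ in $L^2(\Omega')$; applied to the differences $\phi_n-\phi_m$ (again Lipschitz on $K$) this shows that $\{\phi_n\circ u\}$ is Cauchy in $W^{1,2}(\Omega')$. Combined with the $L^2$-convergence $\phi_n\circ u\to \phi\circ u$ coming from the uniform convergence on $K$, this gives $\phi\circ u \in W^{1,2}(\Omega')$ together with $|\nabla(\phi_n\circ u)|\to |\nabla(\phi\circ u)|$ in $L^2(\Omega')$. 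Comparing the two limits yields the identity $|\nabla(\phi\circ u)|=|\phi'(u)|\,|\nabla u|$ $\mm$-a.e.\ on $\Omega'$, and by arbitrariness of $\Omega'$ the global claim follows.

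The main difficulty I anticipate is making rigorous sense of $\phi'(u(x))$ for $\mm$-a.e.\ $x$ on $\{|\nabla u|>0\}$, since $\phi'$ is only defined up to a $\Leb{1}$-null set. This requires a Stampacchia-type property stating that $|\nabla u|=0$ $\mm$-a.e.\ on $u^{-1}(N)$ for every Borel $N\subset \R$ with $\Leb{1}(N)=0$. Such a property is classical for Sobolev, hence locally Lipschitz, functions in the metric measure setting, and I would invoke it from \cite{GP20}; it underpins both the sharp pointwise Lipschitz chain rule used above and the $\mm$-a.e.\ convergence $\phi_n'(u)\to \phi'(u)$ needed in the dominated convergence step.
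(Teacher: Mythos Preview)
Your proposal is correct and follows essentially the same route as the paper: both arguments reduce to a compactly contained situation, approximate $\phi$ by the Lipschitz primitives $\phi_n$ of the truncated derivative $(-n)\vee \phi'\wedge n$, apply the sharp Lipschitz chain rule to $\phi_n\circ u$, and pass to the limit. The only noteworthy difference lies in the limiting step: the paper uses the pointwise bound $|\nabla(\phi_n\circ u)|\le |\phi'(u)||\nabla u|$ together with lower semicontinuity of the minimal weak upper gradient to obtain the inequality $|\nabla(\phi\circ u)|\le |\phi'(u)||\nabla u|$, and then defers the reverse inequality to a standard reference \cite[Theorem 2.1.28]{GP20}; you instead show directly that $\{\phi_n\circ u\}$ is Cauchy in $W^{1,2}(\Omega')$ via dominated convergence, which yields the equality in one stroke. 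Your version is slightly more self-contained, while the paper's is shorter by outsourcing the equality case. Your explicit discussion of the Stampacchia-type property (that $|\nabla u|=0$ $\mm$-a.e.\ on $u^{-1}(N)$ for $\Leb{1}$-null $N$) is a useful clarification that the paper leaves implicit.
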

\begin{proof}
Up to subtracting a constant, we can assume that $0 \in I$ and $\phi(0)=0$. Then with a cut-off argument we can reduce to the case when $u \in \LIP_{c}(\X)$ and $ \phi \in {\sf AC}(\R)$ with compact support and $\phi(0)=0$.  We argue by approximation and define functions $\phi_n\in \LIP(\R)$ by
    \[
    \phi_n(t)\coloneqq \int_0^t -n\vee \phi'(s) \wedge n\, ds.
    \]
    Clearly $\phi_n\to \phi$ pointwise in $\R.$ By the usual chain rule for Lipschitz composition we have that $\phi_n(u)\in W^{1,2}(\X)$ with $|\nabla \phi_n(u)|=|\phi_n'(u)||\nabla u|\le |\phi'(u)||\nabla u|$, $\mm$-a.e., where we have used that $|\phi_n'|\le |\phi'|$ a.e.. 
    In particular  the sequence $|\nabla \phi_n(u)|$ is bounded in $L^2(\mm)$. Moreover $\phi_n(u)\to \phi(u)$ pointwise and from the lower semicontinuity of the minimal weak upper gradient (see, e.g.,  \cite[Prop. 2.1.13]{GP20}) we deduce that $\phi(u)\in W^{1,2}(\X)$ and 
    \begin{equation}\label{eq:chain ineq}
        |\nabla  \phi(u)|\le  |\phi'(u)||\nabla u|,\qquad  \text{$\mm$-a.e..}
    \end{equation}
    The  equality in \eqref{eq:chain ineq} then follows with a standard argument (see e.g.\ \cite[Theorem 2.1.28]{GP20}).
\end{proof}

\medskip

\textbf{Acknowledgements.} F.N. is supported by the Academy of Finland
Grant No. 314789. I.Y.V. is supported by the Academy of Finland
projects \emph{Incidences on Fractals}, Grant No. 321896 and
\emph{Singular integrals, harmonic functions, and boundary regularity in Heisenberg groups}, Grant No. 328846.

    \parskip0pt 
\itemsep0pt
\def\cprime{$'$} \def\cprime{$'$}

\end{document}